\title[]{The Morita equivalence between parametrized spectra and module spectra}
\author{John A. Lind}
\address{Reed College, Portland, Oregon 97202}
\email{john.alexander.lind@gmail.com}
\author{Cary Malkiewich}
\address{Binghamton University, Binghamton, New York 13902}
\email{malkiewich@math.binghamton.edu}
\numberwithin{equation}{section}
\newtheorem{theorem}[equation]{Theorem}
\newtheorem{proposition}[equation]{Proposition}
\newtheorem{lemma}[equation]{Lemma}
\newtheorem{corollary}[equation]{Corollary}
\theoremstyle{definition}
\newtheorem{definition}[equation]{Definition}
\newtheorem{remark}[equation]{Remark}
\newtheorem{example}[equation]{Example}
\newcommand{\bL}{\mathbf{L}}\newcommand{\bR}{\mathbf{R}}\newcommand{\bZ}{\mathbf{Z}}
\newcommand{\cB}{\mathcal{B}}\newcommand{\cC}{\mathcal{C}}\newcommand{\cD}{\mathcal{D}}\newcommand{\cE}{\mathcal{E}}\newcommand{\cG}{\mathcal{G}}\newcommand{\cK}{\mathcal{K}}\newcommand{\cM}{\mathcal{M}}\newcommand{\cN}{\mathcal{N}}\newcommand{\cS}{\mathcal{S}}\newcommand{\cT}{\mathcal{T}}
\DeclareMathOperator{\id}{id}
\DeclareMathOperator{\eval}{eval}
\DeclareMathOperator{\Fun}{Fun}
\DeclareMathOperator*{\hocolim}{hocolim}
\DeclareMathOperator{\ho}{ho}
\DeclareMathOperator{\ob}{ob}
\DeclareMathOperator{\Map}{Map}
\renewcommand{\phi}{\varphi}
\renewcommand{\emptyset}{\O}
\providecommand{\sma}{\wedge}
\DeclareMathOperator{\osma}{\overline{\wedge}}
\providecommand{\arr}{\longrightarrow}
\providecommand{\oarr}[1]{\overset{ #1 }{\longrightarrow}}
\providecommand{\abs}[1]{\lvert #1 \rvert}
\providecommand{\THH}{\mathrm{THH}}
\newcommand{\op}{\textup{op}}
\newcommand{\conn}{\mathrm{conn}}
\providecommand{\Sp}{\cS p}
\providecommand{\Ex}{\cE\mathrm{x}}
\providecommand{\Top}{\cT\mathrm{op}}
\providecommand{\TopGrp}{\cT\mathrm{op}\cG\mathrm{rp}}
\providecommand{\Bimod}{\cB\mathrm{imod}}
\providecommand{\coev}{\mathrm{coev}}
\providecommand{\eval}{\mathrm{eval}}
\providecommand{\Mod}{\cM\mathrm{od}}
\providecommand{\sSet}{s\mathrm{Set}}
\newsavebox{\@brx}
\newcommand{\llangle}[1][]{\savebox{\@brx}{\(\m@th{#1\langle}\)}%
  \mathopen{\copy\@brx\kern-0.5\wd\@brx\usebox{\@brx}}}
\newcommand{\rrangle}[1][]{\savebox{\@brx}{\(\m@th{#1\rangle}\)}%
  \mathclose{\copy\@brx\kern-0.5\wd\@brx\usebox{\@brx}}}
\providecommand{\qI}{\mathbin{^{q}\!I}}
\providecommand{\qfI}{\mathbin{^{qf}\!I}}
\providecommand{\qJ}{\mathbin{^{q}\!J}}
\providecommand{\qfJ}{\mathbin{^{qf}\!J}}
\begin{document}

\subjclass[2010]{Primary 55R70; Secondary 55P43 55P91 55U35}
\date{February 24, 2017, and in revised form, August 30, 2017}

\maketitle

\begin{abstract}
We give a Quillen equivalence between May and Sigurdsson's model category of parametrized spectra over $BG$, and Mandell, May, Schwede, and Shipley's model category of modules over the orthogonal ring spectrum $\Sigma^\infty_+ G$, for each topological group $G$. More generally, for a topological category $\cC$ we introduce an ``aggregate'' model structure on the category of diagrams of spectra indexed by $\cC$, and prove that it is Quillen equivalent to spectra over $B\cC$. This strengthens earlier results that hold at the level of $\infty$-categories, and allows us to give a simple argument that the ``dualizable'' parametrized spectra are precisely the homotopy retracts of the finite cell spectra.
\end{abstract}

\tableofcontents

\section{Introduction}

The theory of parametrized spectra is of central importance to the geometric applications of homotopy theory, and to the construction of transfer maps and Thom spectra. The homotopy category of parametrized spectra over a fixed base space $B$ was first defined in the work of Clapp and Puppe \citelist{\cite{Clapp} \cite{Clapp_Puppe}}. May and Sigurdsson constructed a Quillen model category with this homotopy category \cite{MS}, and shortly thereafter Ando, Blumberg, and Gepner gave an elementary construction of its $\infty$-category \cite{ABG}.

The model category of May and Sigurdsson suffers a few deficiencies, most notably that it is not known to be a monoidal model category.
Its compact objects also evaded a simple geometric characterization, primarily because of difficulties with fibrant replacement. However, since it is a stable model category, the work of Schwede and Shipley \cite{SS} shows that it is Quillen equivalent to a category of modules over some ring spectrum $R$. The ring spectrum $R$ produced by their theorem may be rather unwieldy, but it has an easy characterization of its compact objects as retracts of finite cell objects, and when defining monoidal structures on the category $R$-modules one does not encounter the same difficulties as above. It is therefore desirable to find a simple model for $R$ and to make this ``Morita equivalence'' explicit, so that we may carry the clear insights from module theory back over to the geometric setting of parametrized spectra.  The present paper does exactly that, and although the theorem of Schwede and Shipley is not invoked in our proof, its perspective guides our arguments.

We work with a connected base space, which we may assume is the classifying space $BG$ of a topological group, or more generally a group-like topological monoid. We provide an explicit, highly structured equivalence between spectra parametrized over $BG$ and modules over the suspension spectrum $\Sigma^{\infty}_{+} G$ of $G$:
\begin{theorem}\label{thm:intro_quillen_equiv_G}
For each well-based group-like topological monoid $G$, there is a Quillen equivalence between the category $\Sp_{BG}$ of parametrized spectra over $BG$ and the category $\Mod_{\Sigma^{\infty}_{+} G}$ of modules over the ring spectrum $\Sigma^{\infty}_{+}G$.
\end{theorem}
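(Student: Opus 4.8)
The plan is to recognize $\Mod_{\Sigma^{\infty}_{+}G}$ as a category of diagrams of spectra and then build the equivalence by hand from the universal bundle; this is the one-object special case of the general comparison between parametrized spectra over a classifying space $B\cC$ and ``aggregate'' $\cC$-diagrams, but I will describe the construction directly for $G$, since that already contains the essential homotopical content. Let $\underline{G}$ be the topological category with a single object and morphism space $G$. A continuous functor $\underline{G}\to\Sp$ is an orthogonal spectrum with a continuous $G$-action, i.e.\ a module over the orthogonal ring spectrum $\Sigma^{\infty}_{+}G$, and under this identification the MMSS model structure on $\Mod_{\Sigma^{\infty}_{+}G}$ is the projective model structure on $\Fun(\underline{G},\Sp)$ (its generating cofibrations are $\Sigma^{\infty}_{+}G\wedge i$ for $i$ a generating cofibration of $\Sp$). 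So it suffices to produce a Quillen equivalence $\Fun(\underline{G},\Sp)\simeq\Sp_{BG}$. The comparison functor comes from a single object: let $p\colon EG\to BG$ be the universal principal $G$-bundle, realized by the two-sided bar construction so that $EG$ is a free, well-based, contractible right $G$-space, and let $\widetilde{EG}$ be the fiberwise suspension spectrum over $BG$ of the fiberwise based space $(EG_{+}\to BG)$. The right $G$-action on $EG$ makes $\widetilde{EG}$ a right $\Sigma^{\infty}_{+}G$-module object in $\Sp_{BG}$.

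\textbf{The adjunction.} Define $P\colon\Fun(\underline{G},\Sp)\to\Sp_{BG}$ by the parametrized balanced product $P(M)=\widetilde{EG}\wedge_{\Sigma^{\infty}_{+}G}M$, formed fiberwise over $BG$; equivalently $P(M)=p_{!}(r^{*}M)$ for $r\colon EG\to\ast$, the $G$-action being carried through the quotient $p$. Its right adjoint $Q$ sends $X\to BG$ to $\underline{\Hom}_{\Sp_{BG}}(\widetilde{EG},X)\cong\Sec(EG,p^{*}X)$, which inherits a $G$-action from the one on $EG$ and is thus a $\underline{G}$-diagram. That $(P,Q)$ is an adjunction is formal, from $p_{!}\dashv p^{*}\dashv p_{*}$ together with the tensor--hom adjunction of the bar construction.

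\textbf{Quillen pair and equivalence.} To see $(P,Q)$ is a Quillen adjunction it suffices to check $P$ carries the generating (acyclic) cofibrations of the projective structure on $\Fun(\underline{G},\Sp)$ -- which have the form $\underline{G}_{+}\wedge i$ -- to (acyclic) cofibrations of $\Sp_{BG}$. On these the free $\underline{G}$-direction is absorbed by the bar construction and $P$ produces exactly the fiberwise cells $p_{!}$ of cells over $EG$; these are cofibrations because $p$ is a bundle with well-based total space, and acyclic when $i$ is because $EG$ is contractible and the fiberwise bar construction is homotopy invariant on well-based, cofibrant inputs -- this is where well-basedness of $G$ enters. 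For the Quillen equivalence I would use that $Q$ reflects weak equivalences between fibrant objects and that the derived unit is an equivalence on cofibrant objects. A map in $\Sp_{BG}$ over the connected base $BG$ is a weak equivalence iff it induces an equivalence on homotopy fibers over a point; when $EG$ is contractible and $X$ is fibrant, $Q(X)=\Sec(EG,p^{*}X)$ is equivalent to the fiber of $p^{*}X$ over a point of $EG$, hence to the fiber of $X$ over its image in $BG$, so $Q$ reflects such weak equivalences, granted $p^{*}$ is compatible with fibrant replacement. For the unit $M\to QP(M^{\mathrm{cof}})$: pulling back along $p$, freeness and contractibility of the $G$-action on $EG$ give $p^{*}\widetilde{EG}\simeq\Sigma^{\infty}_{+}G$ as a right $\Sigma^{\infty}_{+}G$-module in $\Sp_{EG}$ (the trivial bundle $EG\times G\to EG$), whence $p^{*}P(M)\simeq\Sigma^{\infty}_{+}G\wedge_{\Sigma^{\infty}_{+}G}\underline{M}\simeq\underline{M}$, the constant diagram on $M$; taking section spectra over the contractible $EG$ recovers $M$ with its original $G$-action, and unwinding definitions identifies this with the unit.

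\textbf{Main obstacle.} The real work is in the point-set homotopy theory of May--Sigurdsson's category, whose $q$- and $qf$-model structures have a delicate notion of fibrancy (well-sectioned spectra, and the awkwardness of fibrant replacement noted in the introduction). One must verify that $p^{*}$, the section functor, and the fiberwise bar construction interact correctly with cofibrant and fibrant replacement, and in particular that the point-set balanced product $\widetilde{EG}\wedge_{\Sigma^{\infty}_{+}G}M$ computes the derived one when $M$ is cofibrant and $G$ is well-based. Group-likeness of $G$ is exactly what guarantees that the bar construction models the universal bundle (so $\Omega BG\simeq G$ and ``homotopy fiber over a point of $BG$'' is the right invariant), while well-basedness keeps the bar constructions homotopically meaningful. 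Once these checks are in place the equivalence is formal, and it visibly respects the $\Sigma^{\infty}_{+}G$-module structure.
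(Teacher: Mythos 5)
Your construction is, in outline, the same as the paper's: after identifying $\Sigma^{\infty}_{+}G$-modules with one-object diagrams, the paper also uses a Borel construction against the universal bundle as the left adjoint and the mapping/section spectrum $F_{BG}(-,M)$ as the right adjoint, and runs the same fiber/unit analysis. The genuine gap is in your Quillen-pair step. In May--Sigurdsson's stable model structure the cofibrations are the $qf$-cofibrations (retracts of relative $\qfI_{\Sp}$-cell complexes built from $f$-disks), and the bar-construction model of $EG \arr BG$ is in general \emph{not} $qf$-cofibrant---the paper notes this fails already for the double cover $E\bZ/2 \arr B\bZ/2$, which is a bundle with well-based total space. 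Consequently your $P(M)=\Sigma^{\infty}_{BG}EG_+\osma_{G}M$ sends the cofibrant module $\Sigma^{\infty}_{+}G$ to a non-cofibrant parametrized spectrum, so $P$ as you define it is not left Quillen, and the assertion that the images of the generating cells are cofibrations ``because $p$ is a bundle with well-based total space'' is exactly the $q$-versus-$qf$ distinction that the May--Sigurdsson framework forces one to confront. The paper's fix is to replace $EG$ by a $qf$-cofibrant approximation $E'G$ (taken in the projective model structure on $G$-spaces over $BG$), to prove that $\osma_{G}$ is a Quillen bifunctor (Lemma \ref{lem:pushout_product}, reducing to \cite{MS}*{12.6.5}), and then to show (Lemma \ref{lemm:cofibrant_detection}) that $E'G_+\osma_{G}X \arr EG_+\osma_{G}X$ is a level equivalence for cofibrant $X$, so the corrected functor still computes the homotopy colimit you want.

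Two further claims in your sketch need the paper's lemmas rather than the reasons you give. First, identifying $Q(X)=\Sec(EG,p^{*}X)$ with the fiber of $X$ for merely $qf$-fibrant $X$ is not automatic, since $qf$-fibrant objects need not be $h$-fibrations levelwise; the paper's Lemma \ref{lemm:rho_equiv_space_level} first treats $h$-fibrations (Shulman's argument) and then passes to $qf$-fibrations by an approximation argument that again uses the $qf$-cofibrancy of $E'G$. Second, your unit computation rests on the trivialization $p^{*}(\Sigma^{\infty}_{BG}EG_+)\simeq\Sigma^{\infty}_{+}G$, i.e.\ on $EG \arr BG$ being a principal bundle; that holds for topological groups but not for group-like monoids, which is the stated generality of the theorem. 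The paper instead invokes a Theorem~B-type quasifibration lemma (Lemma \ref{lemm:thmB}, after Meyer) to identify the fibers of $EG_+\osma_{G}X$ with $X$ when $G$ is group-like---this is where group-likeness actually enters---and uses it both for the derived counit and to show that aggregate and objectwise equivalences coincide. With $E'G$ in place of $EG$ and these two inputs supplied, your outline does match the paper's proof.
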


More generally, for any topological category $\cC$, there is a Quillen equivalence between parametrized spectra over $B\cC$ and a certain localization of the category of $\cC$-diagrams of spectra, where the fibrant objects are diagrams in which every morphism is a stable equivalence. When $\cC^{\op}$ is the category of open sets in a sufficiently nice open cover of a space $X$, these diagrams are closely related to locally constant homotopy sheaves on $X$, see \S\ref{sec:groupoid}.

Theorem \ref{thm:intro_quillen_equiv_G} lifts the earlier result that the underlying $\infty$-categories of $\Sigma^{\infty}_{+} G$-modules and of spectra over $BG$ are equivalent \cite{ABG}. It allows for geometric arguments to be directly transferred from the setting of module spectra to parametrized spectra, e.g. Theorem \ref{thm:intro_cwdualizable} below.

The broader significance of Theorem \ref{thm:intro_quillen_equiv_G} is that it leads to a zig-zag of Quillen equivalences between May and Sigurdsson's model category and a combinatorial model category (using for instance the forgetful functor to symmetric module spectra on simplicial sets, or \cite[2.4.2]{harpaz_nuiten_prasma}). It has been pointed out to the authors that by the framework of Dugger \cite{dugger_universal}, any equivalence between the underlying $\infty$-category of the May-Sigurdsson model category and an $\infty$-category arising from a combinatorial model category can consequently be lifted to a zig-zag of Quillen equivalences. Therefore, in essence, all model categories of parametrized spectra are equivalent. The model category of $\Sigma^\infty_+ BG$-comodule spectra defined by Hess and Shipley is one such example, by Theorem \ref{thm:intro_quillen_equiv_G} and \cite[5.4]{HS}.

Since modules over $\Sigma^{\infty}_{+} G$ are the same thing as spectra with an action of $G$, Theorem \ref{thm:intro_quillen_equiv_G} is an instantiation of an old meta-theorem: objects with an action of $G$ are equivalent to families of such objects over $BG$. This meta-theorem has appeared in various forms, e.g. \citelist{ \cite{ABG} \cite{DDK} \cite{Sh08}}, but really goes all the way back to the classical theory of fiber bundles.  From a different point of view, this result is another instantiation of the philosophy that parametrized objects are equivalent to objects equipped with an action of a category $\cC$. This is at the heart of Lurie's straightening/unstraightening theorem \cite{HTT}*{2.2.1.2}, and indeed the work of Ando-Blumberg-Gepner on the $\infty$-category of parametrized spectra \cite{ABG} takes this as its starting point.  Finally, we note that a parametrized spectrum over $BG$ with a specified fiber $M$ is classified by a map $BG \arr B\mathrm{hAut}(M)$ by the work of the first author \cite{bundles_spectra}.  We can think of the classifying map as encoding a homotopy coherent action of $G$ on $M$.  Theorem \ref{thm:intro_quillen_equiv_G} is a more rigid version of the classification theorem that provides a strict action.

Theorem \ref{thm:intro_quillen_equiv_G} arose as part of our recent work \cite{LM} relating the transfer map in topological Hochschild homology (THH) and the refined Reidemeister trace.
It is essential to our applications that the Quillen equivalence of Theorem \ref{thm:intro_quillen_equiv_G} respects the derived base change functors $f_{!}, f^*, f_{*}$ on either side of the equivalence.  We use the language of indexed symmetric monoidal categories \cite{PS12} to encode this agreement, whose precise statement takes the following form.

\begin{theorem}\label{thm:intro_indexed_equiv}  Let $\Top_{\ast}^{\conn}$ denote the cartesian monoidal category of pointed connected topological spaces.  Upon passage to homotopy categories, the right Quillen adjoint of Theorem \ref{thm:intro_quillen_equiv_G} gives an equivalence of $\Top_{\ast}^{\mathrm{conn}}$-indexed symmetric monoidal categories $\ho \Sp_{(-)} \simeq \ho \Mod_{\Sigma^{\infty}_{+}\Omega (-)}$
\end{theorem}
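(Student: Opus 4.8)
We deduce the statement from the corresponding point-set assertion about the \emph{left} adjoint of Theorem~\ref{thm:intro_quillen_equiv_G}, which is the more directly monoidal of the two functors. Write $\Lambda_B \colon \Mod_{\Sigma^\infty_+\Omega B} \arr \Sp_{B}$ for this left Quillen functor, a model for the Borel construction $M \mapsto PB \times_{\Omega B} M$ along the based path fibration $PB \arr B$, so that $\mathbb{R}\Phi_B$ is the inverse equivalence of $\mathbb{L}\Lambda_B$ on homotopy categories. The plan is to show that $\Lambda$ underlies a strong symmetric monoidal morphism of $\Top_{\ast}^{\conn}$-indexed symmetric monoidal categories after deriving; an inverse of a fiberwise equivalence of that form is again such a morphism, so $\mathbb{R}\Phi_{(-)}$ then acquires the structure of an equivalence of indexed symmetric monoidal categories, respecting in particular each of $f_{!}$, $f^{\ast}$, $f_{\ast}$ as anticipated in the discussion preceding the theorem.

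We first fix the two indexed structures. That $\ho\Sp_{(-)}$ is a $\Top_{\ast}^{\conn}$-indexed symmetric monoidal category, with base change $f^{\ast}$ and external fiberwise smash $\osma$, is standard \cite{MS}; we use the formalism of \cite{PS12}. On the other side, $\Sigma^\infty_+\Omega(-)$ is a product-preserving functor from $(\Top_{\ast}^{\conn},\times)$ to group-like monoids in orthogonal spectra, so a map $f \colon A \arr B$ induces restriction of scalars along $\Sigma^\infty_+\Omega f$, which is strong symmetric monoidal, and the external smash of modules supplies the external product; under the identification of $\Sigma^\infty_+\Omega B$-modules with Borel $\Omega B$-spectra the internal product is the smash product with the diagonal action, with unit the sphere with trivial action. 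This is again an indexed symmetric monoidal category in the sense of \cite{PS12}.

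The heart of the matter is a point-set comparison with two parts. First, the homeomorphism $P(A \times B) \cong PA \times PB$ together with associativity of the smash product produce a natural transformation $\Lambda_A(M)\osma\Lambda_B(N) \arr \Lambda_{A\times B}(M \osma N)$ which is an isomorphism on cofibrant objects, is compatible with units (note that $\Lambda_{\ast}$ is the identity functor of $\Sp_{\ast}=\Sp$), and, upon restriction along diagonals, makes each $\Lambda_B$ lax symmetric monoidal for the internal products. Second, $\Lambda$ commutes with base change: for $f \colon A \arr B$ the natural map
\[
PA \times_{\Omega A}\Omega B \arr f^{\ast}PB
\]
exhibits $f^{\ast}PB$ as the principal $\Omega B$-bundle associated to the principal $\Omega A$-bundle $PA$, whence $f^{\ast}\Lambda_B(M) = (f^{\ast}PB)\times_{\Omega B}M \cong (PA \times_{\Omega A}\Omega B)\times_{\Omega B}M = PA \times_{\Omega A}M = \Lambda_A\big((\Sigma^\infty_+\Omega f)^{\ast}M\big)$. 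One then verifies the coherence diagrams required of a monoidal morphism of indexed categories; at the point-set level these hold essentially on the nose, since every comparison map is built from the same manipulations of path spaces, orbits, and the smash product.

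It remains to derive $\Lambda$, which requires cofibrant replacement on both sides, and to check that the comparison maps above become weak equivalences. For base change this reduces to the standard fact that $PA \times_{\Omega A}\Omega B \arr f^{\ast}PB$ is a weak equivalence of parametrized spaces over $A$ --- both sides having the homotopy type of the homotopy fiber of $f$ --- and for the monoidal structure to the fact that the external smash of cofibrant parametrized spectra computes the derived external smash. Granting this, $\mathbb{L}\Lambda_{(-)}$ is a strong symmetric monoidal pseudonatural transformation $\ho\Mod_{\Sigma^\infty_+\Omega(-)} \arr \ho\Sp_{(-)}$ each of whose components is an equivalence by Theorem~\ref{thm:intro_quillen_equiv_G}, hence an equivalence of indexed symmetric monoidal categories with inverse $\mathbb{R}\Phi_{(-)}$. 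The main obstacle is precisely this homotopical step: the point-set smash products of \cite{MS} are not homotopically well behaved in isolation, so one must keep careful track of which cofibrancy hypotheses make the comparison maps into weak equivalences, and likewise check that the indexed coherence data remains coherent for the derived functors. By contrast, the base-change compatibility is essentially formal once the associated-bundle description of $f^{\ast}PB$ is in hand.
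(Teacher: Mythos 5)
Your proposal is correct in outline but takes a genuinely different route from the paper. You transfer the indexed structure through the \emph{left} adjoint, modeled as the path-space Borel construction $PB \times_{\Omega B}(-)$, proving strong monoidality via $P(A\times B)\cong PA\times PB$ and base-change compatibility via the associated-bundle equivalence $PA\times_{\Omega A}\Omega B \arr f^*PB$, and then inverting. The paper never makes the Borel construction monoidal: it works with the right adjoint throughout, using Lemma \ref{lemm:rho_equiv} to identify $\bR\Phi$ with the derived fiber functor $i_{[*]}^*$ and thereby import the strong symmetric monoidal structure already available from \cite{MS}*{13.7.10}; base-change compatibility is obtained purely by adjunction, $F_{BH}(E'H_+,(Bf)^*M)\simeq F_{BG}((Bf)_!E'H_+,M)\simeq F_{BG}(E'G_+,M)$, using $qf$-cofibrant replacements $E'$; and the result is first proved as a $\TopGrp$-indexed equivalence (Theorem \ref{thm:equiv_indexed_symm_mon_cats}) and only then transported to $\Top_*^{\conn}$ along $\xi\colon B\Omega X\to X$ (Corollary \ref{cor:indexed_equiv}). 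The paper's route buys exactly what you flag as your main obstacle: it sidesteps the cofibrancy and coherence bookkeeping needed to derive a monoidal left adjoint in the May--Sigurdsson setting. Your route is more geometric and handles the $\Top_*^{\conn}$-indexing in one step, but to be complete you must (i) verify that your $\Lambda_B$ (with $PB$ replaced by a $qf$-cofibrant approximation, the analogue of $E'\cC$) really models the left adjoint of Theorem \ref{thm:intro_quillen_equiv_G} transported along $\ho\Sp_{B\Omega B}\simeq\ho\Sp_B$, so that you are inverting the functor named in the statement, and (ii) carry out the deferred weak-equivalence checks, which is where arguments like Lemma \ref{lemm:cofibrant_detection} do the real work; neither is an obstruction, but they are the substance of the proof rather than routine verification.
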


Theorem \ref{thm:intro_indexed_equiv} gives a succinct proof of the well-known equivalence $\THH(\Sigma^{\infty}_{+} \Omega X) \simeq \Sigma^{\infty}_{+} LX$, where $LX = \Map(S^1, X)$ denotes the free loop space \cite{madsen_survey}. This is because it allows us to identify the derived tensor product
\[ \Sigma^\infty_+ \Omega X \sma^{\bL}_{\Sigma^\infty_+ \Omega X \sma \Sigma^\infty_+ \Omega X^\op} \Sigma^\infty_+ \Omega X \]
and the suspension spectrum of the homotopy pullback $X \times^{h}_{X \times X} X \simeq LX$ as instances of the same recipe, described in terms of base-change functors (see Remark \ref{rem:free_loop}).

Lurking in the background of this discussion is the analogy between the bicategory $\Bimod$ of ring spectra, bimodules, and homotopy classes of bimodule maps, and the bicategory $\Ex$ of spaces, parametrized spectra and homotopy classes of maps of parametrized spectra \cite{MS}*{\S17}.  Another application of our Morita equivalence is to characterize the duality theory internal to the bicategory $\Ex$, known as Costenoble-Waner duality, using the duality theory of bimodules.  In particular, we answer a question posed by May and Sigurdsson \cite{MS}*{\S18.2}, characterizing the compact objects in the triangulated category $\ho \Sp_{B}$ as homotopy retracts of finite cell spectra.

\begin{theorem}\label{thm:intro_cwdualizable}
A parametrized spectrum $X$ over $B$ is Costenoble-Waner dualizable precisely when it is a retract in the homotopy category of a finite cell spectrum over $B$.  More generally, a 1-cell $X$ in the bicategory $\Ex$ from $A$ to $B$ is dualizable over $B$ precisely when each derived pullback $i_{a}^* X$ to a point $a \in A$ is a retract of a finite cell spectrum over $B$.
\end{theorem}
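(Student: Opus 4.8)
The plan is to transport the statement across the Morita equivalence of Theorems~\ref{thm:intro_quillen_equiv_G} and~\ref{thm:intro_indexed_equiv}, under which Costenoble--Waner duality becomes the classical duality theory of module spectra. The first step is to reduce to a connected base. After replacing $B$ by a CW approximation (all of the relevant structure is homotopy invariant), write $B=\coprod_i B_i$ with each $B_i$ connected; then $\ho\Sp_B\simeq\prod_i\ho\Sp_{B_i}$, and both ``Costenoble--Waner dualizable'' and ``homotopy retract of a finite cell spectrum over $B$'' are detected factorwise. So we may assume $B=BG$ for a well-based group-like topological monoid $G$; for the more general statement we may likewise take $A=BH$ connected (the condition on the pullbacks $i_a^\ast X$ is already stated one component of $A$ at a time), and the case $A=\ast$ will recover the first assertion, since then $i_a^\ast X=X$.

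The second step is to use Theorem~\ref{thm:intro_indexed_equiv} to see that the equivalence is compatible with the bicategorical structures of $\Ex$ and $\Bimod$. Composition in $\Ex$ is assembled from the base-change functors $f_!$, $f^\ast$ and the external smash product $\osma$ --- the composite of $X$ over $A\times B$ with $Y$ over $B\times C$ being $(\pi_{AC})_! \Delta_B^\ast(X\osma Y)$ --- and the unit $1$-cells $S_B=(\Delta_B)_!\bbS_B$ have the same form; on the other side, composition in $\Bimod$ is the derived smash over the middle ring and the unit at $\Sigma^\infty_+ G$ is $\Sigma^\infty_+ G$ itself. Since the equivalence of Theorem~\ref{thm:intro_indexed_equiv} respects $f_!$, $f^\ast$, $f_\ast$ and the external smash product, it carries all of this data across and so restricts to an equivalence of bicategories between the full subbicategory of $\Ex$ on connected spaces and the subbicategory of $\Bimod$ on the ring spectra $\Sigma^\infty_+\Omega(-)$. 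An equivalence of bicategories preserves and reflects adjoint $1$-cells, hence dualizable ones; so a parametrized spectrum $X$ over $BG$ is Costenoble--Waner dualizable iff its image, viewed as an $(\bbS,\Sigma^\infty_+ G)$-bimodule, i.e.\ as a right $\Sigma^\infty_+ G$-module, is dualizable in $\Bimod$, and more generally $X$ as a $1$-cell $A\to B$ is dualizable over $B$ iff the corresponding $(\Sigma^\infty_+ H,\Sigma^\infty_+ G)$-bimodule is right dualizable over $\Sigma^\infty_+ G$.

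The third step invokes the classical module-theoretic facts: a right $R$-module is right dualizable as a $1$-cell in $\Bimod$ exactly when it is a compact object of $\ho\Mod_R$, equivalently a homotopy retract of a finite cell $R$-module \citelist{\cite{MS} \cite{PS12}}, and right dualizability over $R$ of a bimodule depends only on its underlying right $R$-module. The rest is bookkeeping. First, ``compact $\Sigma^\infty_+ G$-module'' matches ``homotopy retract of a finite cell spectrum over $BG$'': each is the thick subcategory generated by a single object --- by $\Sigma^\infty_+ G$ on the module side, and on the parametrized side by $p_!\bbS_\ast$, the image of the sphere spectrum over a point under a basepoint inclusion $p\colon\ast\to BG$ (since $BG$ is connected, every generating cell of $\Sp_{BG}$ is, up to shift and homotopy equivalence, a suspension of this one) --- and under the equivalence $p_!\bbS_\ast$ corresponds to $\Sigma^\infty_+ G\sma^{\bL}_{\bbS}\bbS\simeq\Sigma^\infty_+ G$, so the two thick subcategories correspond. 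Second, forgetting the left $\Sigma^\infty_+ H$-action is restriction of scalars along the unit $\bbS\to\Sigma^\infty_+ H$, which on the parametrized side is the derived pullback $i_a^\ast$ along a point $a\in A$; hence the underlying right $\Sigma^\infty_+ G$-module of $X$ is compact iff $i_a^\ast X$ is a homotopy retract of a finite cell spectrum over $B$, for one (equivalently, as $A$ is connected, every) $a\in A$. Reassembling over the components of $A$ in the general case, this proves both assertions.

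The step I expect to be the real work is the second one: promoting the indexed symmetric monoidal equivalence of Theorem~\ref{thm:intro_indexed_equiv} to a genuine equivalence of bicategories, which requires matching the composition pairings, the unit $1$-cells, and the associativity and unit coherence data of $\Ex$ and $\Bimod$ --- in particular checking that $(\pi_{AC})_! \Delta_B^\ast(X\osma Y)$ is carried to the derived smash over the middle ring spectrum compatibly with units. The remaining ingredients --- the reduction to a connected base, the classical characterization of dualizable bimodules, and the thick-subcategory identification --- are comparatively routine.
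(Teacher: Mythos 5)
Your first two steps run parallel to the paper: the paper also reduces to connected, pointed base spaces, transports the coevaluation and evaluation maps across the indexed equivalence of Corollary \ref{cor:indexed_equiv} to exhibit $\bR\Phi X$ as a dualizable $1$-cell in $\Bimod$, forgets the left $\Sigma^\infty_+\Omega A$-action, and invokes the standard fact that a dualizable right module is a retract of a finite cell module (the bicategorical compatibility is handled by citing \cite{PS12} and the companion paper \cite{LM}, so it is not where the new work lies). The genuine gap is in your third step, at exactly the point you call routine. To get from ``$\bR\Phi X$ lies in the thick subcategory generated by $\Sigma^\infty_+\Omega B$'' back to ``$X$ is a homotopy retract of a finite cell spectrum over $B$,'' you need the containment $\mathrm{thick}(p_!\bbS)\subseteq\{\text{retracts of finite cell spectra over } B\}$ in $\ho\Sp_{B}$; your parenthetical remark (that over a connected base every cell is, up to shift, the fiber sphere) only supports the \emph{opposite} containment. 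The containment you need requires that homotopy retracts of finite $qf$-cell spectra form a thick subcategory of $\ho\Sp_B$, i.e.\ are closed under completing triangles, and this is not routine: to form the cofiber of a homotopy-category map $\phi\colon X\arr Y$ between finite cell objects one must either realize $\phi$ by a point-set map (which forces a fibrant replacement of $Y$ and destroys finiteness) or run a smallness argument against a cellular tower, and both maneuvers break down in the parametrized setting for precisely the reason flagged in \cite{MS}*{18.2.10} and quoted in \S\ref{sec:cw_duality}: one cannot make a parametrized object simultaneously compact and fibrant. Indeed, granting the Morita equivalence and the module-theoretic input, the unproved containment is \emph{equivalent} to the hard direction of the theorem (dualizable objects form a thick subcategory containing $p_!\bbS$), so at this step your argument assumes essentially what is to be proved.

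The paper circumvents this by never invoking thickness on the parametrized side. After reaching the same intermediate conclusion as you, it proves directly (the proposition at the end of \S\ref{sec:cw_duality}) that the Borel construction $\beta$ carries a finite cell $\Sigma^{\infty}_{+}G$-module to something equivalent in $\ho\Sp_{BG}$ to a finite $qf$-cell spectrum. This is a concrete induction over the equivariant cells: the cell $EG_+\osma_G(G\times D^n)_+\cong(EG\times D^n)_+$ is not a finite cell, so one uses the contractibility of $EG$ to compress it to an honest $n$-disk, transports the attaching map through a chosen homotopy inverse (only available in the homotopy category of spaces, since the inductively built complex $Y_i$ is not fibrant over $BG$), and reparametrizes the disk by the ``expansion by two'' map so that the attached cell is a genuine $f$-disk, i.e.\ lies in $\qfI$. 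That geometric construction is the real content of this section --- not the bicategorical coherence of your step two --- and it is the ingredient your proposal is missing; supplying it (or an honest proof that retracts of finite cell parametrized spectra are thick) is what would complete your argument.
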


The equivalence of underlying $\infty$-categories could of course be used to get an analogous result, but the Quillen equivalence of Theorem \ref{thm:intro_indexed_equiv} allows for a pleasingly direct argument. Finally, we get as a corollary a new interpretation of Waldhausen's algebraic $K$-theory of spaces.

\begin{corollary}\label{cor}
Waldhausen's functor $A(B)$ is equivalent to the $K$-theory of the category of Costenoble-Waner dualizable spectra over $B$.
\end{corollary}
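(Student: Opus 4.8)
\emph{Proof plan.} The plan is to combine the Quillen equivalence of Theorem~\ref{thm:intro_quillen_equiv_G} with the geometric description of dualizable objects in Theorem~\ref{thm:intro_cwdualizable}, and then to use that algebraic $K$-theory is unchanged under the resulting equivalence of categories of compact objects. First I would reduce to the case that $B$ is path-connected: both $A(-)$ and $\Sp_{(-)}$ carry disjoint unions to products, so $A(B) = \prod_i A(B_i)$ and $\Sp_B = \prod_i \Sp_{B_i}$ over the components $B_i$. Replacing $B$ by a CW model, the loop space $\Omega B$ carries the structure of a well-based group-like topological monoid, so Theorem~\ref{thm:intro_quillen_equiv_G} applies with $G = \Omega B$ and gives a Quillen equivalence $\Sp_B \simeq \Mod_{\Sigma^\infty_+ \Omega B}$. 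I would then recall Waldhausen's theorem, in the form that identifies $A(B)$ with the algebraic $K$-theory of the Waldhausen category of perfect $\Sigma^\infty_+\Omega B$-modules --- equivalently the compact objects of $\ho\Mod_{\Sigma^\infty_+\Omega B}$, equivalently the homotopy retracts of finite cell modules --- where the weak equivalences are the stable equivalences.

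Since the Quillen equivalence of Theorem~\ref{thm:intro_quillen_equiv_G} is an equivalence of stable model categories, it induces an equivalence of the underlying stable $\infty$-categories, hence an exact equivalence of triangulated homotopy categories, and therefore an equivalence between the respective full subcategories of compact objects. On the module side these are the perfect modules. On the parametrized side, Theorem~\ref{thm:intro_cwdualizable} identifies the compact objects of $\ho\Sp_B$ with the Costenoble--Waner dualizable spectra over $B$, the homotopy retracts of finite cell parametrized spectra. (That dualizability and compactness coincide here is part of the content of Theorem~\ref{thm:intro_cwdualizable}; one may also read it off from the indexed monoidal equivalence of Theorem~\ref{thm:intro_indexed_equiv}, which matches the duality data on the two sides, so that Costenoble--Waner duality corresponds to bimodule duality.) Consequently the Quillen equivalence restricts to an exact equivalence between the category of Costenoble--Waner dualizable spectra over $B$ and the category of perfect $\Sigma^\infty_+\Omega B$-modules.

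The algebraic $K$-theory of a Waldhausen category whose weak equivalences are the stable equivalences depends only on the associated stable $\infty$-category, so the equivalence of the previous paragraph yields
\[
K\bigl(\text{Costenoble--Waner dualizable spectra over } B\bigr) \;\simeq\; K\bigl(\text{perfect }\Sigma^\infty_+\Omega B\text{-modules}\bigr) \;\simeq\; A(B),
\]
which is the assertion of Corollary~\ref{cor}. A more elementary version of this last step avoids $\infty$-categories: the left Quillen adjoint $L$ of Theorem~\ref{thm:intro_quillen_equiv_G} sends the free module $\Sigma^\infty_+\Omega B$ to a finite cell parametrized spectrum over $B$, hence, being a left adjoint preserving cofibrations and cell structures, sends finite cell modules to finite cell parametrized spectra up to weak equivalence; this defines an exact functor of Waldhausen categories which is a derived equivalence, so Waldhausen's approximation and cofinality theorems identify the two $K$-theories.

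I expect the main obstacle to be exactly this last step: one must equip the subcategories of compact objects on the two sides with compatible Waldhausen (or Waldhausen $\infty$-category) structures so that the Quillen equivalence genuinely induces an exact equivalence, and one must handle idempotent completion and cofinality carefully enough to land precisely on $A(B)$, in Waldhausen's original sense of the $K$-theory of homotopy finite retractive spaces over $B$, rather than on a variant differing from it by a group of components. By contrast, the identification of the two duality theories, on which the statement about compact objects rests, needs no new work beyond Theorems~\ref{thm:intro_cwdualizable} and~\ref{thm:intro_indexed_equiv}.
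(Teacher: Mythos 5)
Your argument is essentially correct, but it takes a genuinely different route from the paper. The paper never passes to module spectra at this step: it recalls that $A(B)$ is the $K$-theory of retractive spaces over $B$ that are retracts of finite $q$-cell complexes, invokes the invariance of $K$-theory under stabilization to re-describe $A(B)$ as the $K$-theory of finite $q$-cell parametrized spectra over $B$, uses the observation extracted from the proof of the Proposition in \S\ref{sec:cw_duality} that an object of $\ho\Sp_B$ is equivalent to a finite $q$-cell spectrum if and only if it is equivalent to a finite $qf$-cell spectrum, and then concludes directly from Theorem~\ref{thm:intro_cwdualizable}. Your route instead cites Waldhausen's identification of $A(B)$ with the $K$-theory of perfect $\Sigma^\infty_+\Omega B$-modules and transports $K$-theory across the Quillen equivalence of Theorem~\ref{thm:intro_quillen_equiv_G}, using the identification of compact objects with dualizable ones (which, strictly speaking, is May--Sigurdsson's result quoted in \S\ref{sec:cw_duality}; Theorem~\ref{thm:intro_cwdualizable} is the further statement that these are retracts of finite cell spectra). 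What each approach buys: the paper's argument needs no transport of Waldhausen structures across the equivalence and no external input beyond the classical stabilization description of $A$-theory, exactly sidestepping the obstacle you flag at the end (equipping the compact objects on both sides with compatible Waldhausen structures and invoking approximation/cofinality); your argument makes the Morita picture explicit and would work knowing only that dualizables are the compacts, but it leans on the theorem $A(B)\simeq K(\Sigma^\infty_+\Omega B)$, which the paper's route in effect avoids assuming. The cofinality caveat you raise (retracts of finite cell versus genuinely finite cell objects, a potential $\pi_0$ discrepancy) is real, but it is present in the paper's formulation as well and is not treated there either, so it does not count against your proposal.
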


\subsection*{Outline} Our strategy to prove Theorem \ref{thm:intro_quillen_equiv_G} applies with $G$ replaced by a topological category $\cC$, and our arguments proceed with that level of generality.  In \S\ref{sec:cells}, we discuss the philosophy of cell complexes, explain the distinction between $qf$- and $q$-cells in parametrized homotopy theory, and recall the May-Sigurdsson model structure on the category of parametrized spectra.  In \S\ref{sec:parametrized_intro}, we describe the category $\cC\Sp$ of $\cC$-diagrams of spectra, equipped with the projective model structure, and construct a Quillen adjunction $(\beta, \Phi)$ between $\cC$-spectra and parametrized spectra over the classifying space $B\cC$.  In \S\ref{sec:aggregate}, we construct the \emph{aggregate} model structure on $\cC\Sp$ and prove that the adjunction $(\beta, \Phi)$ is a Quillen equivalence between it and the stable model structure on $\Sp_{B\cC}$.  When $\cC$ is a groupoid up to homotopy, in particular a group or group-like monoid, the aggregate model structure coincides with the projective model structure, as we prove in \S\ref{sec:groupoid}.  We then deduce Theorem \ref{thm:intro_quillen_equiv_G} as a consequence.  In \S\ref{sec:indexed}, we set up the language of indexed monoidal categories and prove Theorem \ref{thm:intro_indexed_equiv}.  Finally, in \S\ref{sec:cw_duality}, we derive the results in the introduction regarding duality and finite cell spectra.

\subsection*{Conventions}  By a space, we mean a compactly generated topological space, and we denote the category of spaces by $\Top$.  In May-Sigurdsson's work on parametrized homotopy theory \cite{MS}, they work more generally with the category $\cK$ of $k$-spaces because the formation of internal-hom objects in the category of parametrized spaces constructs spaces that are not compactly generated.  We do not use internal-hom objects in this paper, and we may safely work in $\Top$.  Alternatively, this paper may be read as taking place in $\cK$ instead.

\subsection*{Acknowledgements}

The authors thank Thomas Nikolaus for a helpful discussion of the consequences of Theorem \ref{thm:intro_indexed_equiv} for $\infty$-categories, and of an alternate approach to Theorem \ref{thm:intro_cwdualizable}. J.L. was supported in part by the DFG through SFB1085. C.M. was supported in part by an AMS Simons Travel Grant.


\section{Cell complexes and parametrized homotopy theory}\label{sec:cells}

The basic tension in parametrized homotopy theory is that homotopy groups are encoded in the fibers $E_b$ of an object $E$ parametrized over $B$, but a presentation of the homotopy theory in terms of cellular objects, either via model categories or classical CW methods, must involve total spaces.   In this section, we recall the approach to parametrized stable homotopy theory developed by May-Sigurdsson \cite{MS} in terms of $qf$-cells and contrast it with the usual Quillen notion of $q$-cells.  

The category $\Top/B$ of spaces over $B$ is the comma category of spaces 
\[
(E, p) = (p \colon E \arr B)
\]
equipped with a map $p$ to $B$.  As with any comma category, we may lift the Quillen-Serre $q$-model structure on $\Top$ to $\Top/B$.  The weak equivalences are the weak homotopy equivalences on total spaces.  The cofibrations, which we call \emph{$q$-cofibrations}, are the retracts of relative $\qI$-cell complexes, where
\[
\qI = \{ i(p) \colon (S^{n - 1}, p \circ i) \arr (D^{n}, p) \mid n \geq 0, \, p \colon D^n \arr B \}
\]
is the set of inclusions of the boundary of the $n$-disk, considered as a parametrized space via an arbitrary map $p$ to the base space $B$.  We use the term $q$-cell to mean a cell attached to a space over $B$ via pushout along a map in $\qI$.  The fibrations, which we call $q$-fibrations, are maps in $\Top/B$ that have the right lifting property against the set
\[
\qJ = \{j(p) \colon (D^{n}, p \circ j) \arr (D^n \times I, p) \mid  n \geq 1, \, p \colon D^n \times I \arr B \},
\]
or equivalently the maps that are Serre fibrations on total spaces.  In particular, the $q$-fibrant spaces over $B$ are precisely the Serre fibrations over $B$.  

The notion of a $q$-cofibration is in contrast to the notion of an $h$-cofibration, also known as an Hurewicz cofibration, which is a map $i \colon A \arr X$ having the homotopy extension property in the category $\Top$, meaning that every diagram of the form
\[
\xymatrix{
A \ar[d]_{i} \ar[r] & E^I \ar[d]^{\mathrm{ev}_0} \\
X \ar@{-->}[ur] \ar[r] & E
}
\]
admits a diagonal lift as indicated.  There is  a stronger notion of an $f$-cofibration, which is a map $i \colon A \arr X$ in the category $\Top/B$ having the homotopy extension property in the category $\Top/B$, meaning that a diagram as above of spaces over $B$ admits a lift that is a map of spaces over $B$.

There is also a pointed version of a parametrized space, which we call an ex-space.  The category $\Top_{B}$ of ex-spaces is the comma category of parametrized spaces 
\[
(E, p, s) = ( s \colon (B, \id_{B}) \arr (E, p) )
\]
over $B$ equipped with a section $s \colon B \arr E$. We let $F_B(E,E')$ refer to the space of maps from $E$ to $E'$ respecting both the inclusion of $B$ and the projection into $B$.

As with any comma category, we may lift the $q$-model structure on $\Top/B$ to $\Top_{B}$, the result of which we also call the $q$-model structure.  The weak equivalences and fibrations are detected by the forgetful functor to $\Top/B$.  The cofibrations are the retracts of relative $\qI_+$-cell complexes, where the set $\qI_{+}$ is obtained from $\qI$ by applying the left adjoint $(-)_+ \colon \Top/B \arr \Top_{B}$ of the forgetful functor, which freely adjoins a disjoint section.  We also use the term $q$-cell for a cell attached to an ex-space by pushout along a map in $\qI_{+}$.  The $q$-model structures on $\Top/B$ and $\Top_{B}$ are compactly generated with generating sets $(\qI, \qJ)$ and $(\qI_{+}, \qJ_{+})$, in the sense of \cite{MS}*{\S4.5}.  In particular, they are both cofibrantly generated model categories.

Recall that an orthogonal spectrum $X$ is an enriched diagram of based spaces $\{X_V: V \in \textup{ob}\mathscr J \}$, indexed by a category $\mathscr J$ that is also enriched in based spaces (see \cite[4.4]{MMSS} for its definition). 
\begin{definition}
We define a \emph{parametrized spectrum} $X$ to be a diagram of ex-spaces $X_V$ indexed by $\mathscr J$. This means that each morphism space $\mathscr J(V,W)$ is mapped continuously into the space $F_{B}(X_V,X_W)$ of maps of ex-spaces.  
For each $b \in B$, the diagram of fibers $(X_V)_b$ forms an ordinary orthogonal spectrum, which we call the \emph{fiber spectrum} and denote by $X_b$.
\end{definition}

Let $\Sp_B$ be the category of parametrized spectra over $B$. We define a \emph{level equivalence} to be a map of parametrized spectra $X \arr Y$ that gives an equivalence on each level, i.e. $X_V \arr Y_V$ is an equivalence of ex-spaces for each object $V$ in $\mathscr J$.

Next we define the stable equivalences. Suppose that $R(-)$ is a levelwise $q$-fibrant replacement functor. This means that it associates to each parametrized spectrum $X$ a parametrized spectrum $RX$ whose levels $(RX)_{V} \arr B$ are $q$-fibrations, and there is a natural level equivalence $X \arr RX$. Then we say that $X \arr Y$ is a \emph{stable equivalence} if the map of fibers $(RX)_b \arr (RY)_b$ is a $\pi_*$-isomorphism of orthogonal spectra, for every $b \in B$.  We will soon see that stable equivalences may be computed in terms of more general fibrant replacement functors $R$ (see Lemma \ref{lem:different_fib_replace}, and compare \cite{MS}*{12.3.4}). It is easy to check that the functor $\Sigma^{\infty}_{+} \colon \Top/B \arr \Sp_{B}$, which adds a disjoint section and then takes the fiberwise suspension spectrum, takes each weak homotopy equivalence of total spaces to a stable equivalence of parametrized spectra.


It is straightforward to build a model structure on $\Sp_B$ around the level equivalences. To be specific, let $F_{V} \colon \Top_{B} \arr \Sp_{B}$ denote the shift-desuspension functor indexed by a real inner-product space $V$. By definition, $F_{V}$ is left adjoint to the functor that evaluates a $\mathscr J$-diagram at the object $V$. Then there is a level $q$-model structure on $\Sp_B$, with the weak equivalences the level equivalences. The generating cofibrations and generating acyclic cofibrations are the maps of the form $F_{V} i(p)_+$ for $i(p)_+ \in \qI_{+}$ and $F_{V} j(p)_+$ for $j(p)_+ \in \qJ_{+}$.

However, when expanding the class of weak equivalences to the stable equivalences, one encounters a technical difficulty. The maps $i(p)$ and $i(p)_+$ are not $f$-cofibrations, meaning maps with the homotopy extension property in the category $\Top/B$.  They are only $h$-cofibrations, i.e. Hurewicz cofibrations.  As a consequence, applications of the gluing lemma that would allow inductive arguments over $q$-cell complexes fail in the fiberwise setting.  In particular, this prohibits the usual method of verifying that relative cell complexes built out of the generating acyclic $q$-cofibrations in $\Sp_{B}$ are weak equivalences.  There is no known ``$q$-''style model structure on $\Sp_{B}$ whose weak equivalences are the stable equivalences.

In order to construct a stable model structure on parametrized spectra, May-Sigurdsson develop an alternative model structure on parametrized spaces by replacing the $q$-cells with a more restrictive notion of $qf$-cells \cite{MS}*{\S6}.  Let
\[
\qfI = \{ i(p) \in \qI \mid \text{$i$ is an $f$-cofibration} \}
\]
be the set consisting of those generating $q$-cofibrations for which the inclusion $i \colon S^{n - 1} \arr D^n$ is an $f$-cofibration.  We call such a parametrized disk $p \colon D^n \arr B$ an $f$-disk over $B$, and note that $D^n$ has a collar neighborhood of the boundary whose map into $B$ lies entirely in the image of the boundary $S^{n-1}$.  For example, fix $b \in B$ and write $(D^n)^{b} = (D^n \oarr{r} \ast \oarr{b} B)$ for the parametrized space consisting of a single copy of $D^n$ as the fiber over $b$.  Then the inclusion
\[
i(b \circ r) \colon (S^{n - 1})^{b} \arr (D^n)^{b}
\]
is an element of $\qfI$.

By imposing a similar condition on $\qJ$ we get a subset $\qfJ \subset \qJ$, and we write $\qfI_{+}$ and $\qfJ_{+}$ for the sets of maps obtained by applying the disjoint section functor to $\qfI$ and $\qfJ$.

There is a compactly generated model structure on $\Top/B$ (respectively, $\Top_{B}$), called the $qf$-model structure, in which
\begin{itemize}
\item the weak equivalences are the weak homotopy equivalences of total spaces,
\item the cofibrations are the $qf$-cofibrations, which are the retracts of relative $\qfI$-cell complexes (respectively $\qfI_{+}$-cell complexes), and
\item the fibrations are the $qf$-fibrations; these are the maps which have the right lifting property with respect to $\qfJ$ (respectively $\qfJ_{+}$).
\end{itemize}
Notice that every $qf$-cofibration is a $q$-cofibration and that every $q$-fibration is a $qf$-fibration.  A quasifibration is a map $X \arr B$ for which the inclusion of each fiber into the corresponding homotopy fiber is a weak homotopy equivalence.  It is a crucial fact that every $qf$-fibration is a quasifibration \cite[6.5.1]{MS}, so we can still make arguments using the five-lemma as we normally would in fibration theory. 

Moving to the level of spectra, we first define a new set of generating $qf$-cofibrations:
\begin{align*}
\qfI_{\Sp} &= \{ F_{V} i(p)_+ \mid i(p) \in \qfI \}
\end{align*}
Then there is a compactly generated model structure on $\Sp_{B}$, called the stable model structure, in which the weak equivalences, cofibrations, and fibrations are:
\begin{itemize}
\item the stable equivalences as described above,
\item the $qf$-cofibrations, which are retracts of relative $\qfI_{\Sp}$-cell complexes, and
\item the $qf$-fibrations, which have the right lifting property with respect to all $qf$-cofibrations that are stable equivalences.
\end{itemize}
As this model structure is cofibrantly generated, it has generating acyclic cofibrations as well, but we will not need their precise definition. 
We use the term $qf$-cell to mean any cell attached to a parametrized space, ex-space, or parametrized spectrum by pushout along a map in $\qfI$, $\qfI_{+}$, or $\qfI_{\Sp}$.


We may use $qf$-fibrations, as well as the weaker notion of ex-quasifibration \cite{MS}*{\S8.5}, to detect stable equivalences of parametrized spectra.

\begin{lemma}\label{lem:different_fib_replace}\cite{MS}*{12.3.4, 12.4.1}
A map $X \arr Y$ of parametrized spectra is a stable equivalence if and only if every map of fiber spectra $RX_{b} \arr RY_{b}$ is a $\pi_*$-isomorphism of orthogonal spectra, where $R$ is a level-wise $qf$-fibrant approximation functor.  The same statement holds if $R$ is a level-wise ex-quasifibrant replacement functor .
\end{lemma}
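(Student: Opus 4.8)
The plan is to compare an arbitrary levelwise $qf$-fibrant, or ex-quasifibrant, replacement functor against the levelwise $q$-fibrant replacement functor built into the definition of stable equivalence, reducing everything to the fact that $qf$-fibrations and ex-quasifibrations have the correct homotopy fibers. Write $R_q$ for a fixed levelwise $q$-fibrant replacement functor on $\Sp_B$, which exists because the level $q$-model structure on $\Sp_B$ is cofibrantly generated; by definition $X \arr Y$ is a stable equivalence exactly when each $(R_q X)_b \arr (R_q Y)_b$ is a $\pi_*$-isomorphism of orthogonal spectra. Let $R$ denote either a levelwise $qf$-fibrant approximation functor or a levelwise ex-quasifibrant replacement functor.

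The one nontrivial input I would isolate first is a fiberwise comparison: if $g \colon E \arr E'$ is a level equivalence in $\Top_B$ with $E \arr B$ a quasifibration and $E' \arr B$ a $q$-fibration, then $g_b \colon E_b \arr E'_b$ is a weak homotopy equivalence for every $b$. This is because each honest fiber includes into its homotopy fiber by a weak equivalence --- for $E$ since it is a quasifibration, for $E'$ since Serre fibrations are quasifibrations --- while $g$ induces a weak equivalence of homotopy fibers, being a weak equivalence of total spaces over the fixed base $B$; two-out-of-three in the square relating fibers to homotopy fibers then finishes it. This applies to $R$: by \cite{MS}*{6.5.1} every $qf$-fibration is a quasifibration, and \cite{MS}*{\S8.5} gives the analogue for ex-quasifibrations, so every level $(RX)_V \arr B$ is a quasifibration.

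Next I would build, naturally in $X$, a zig-zag of $\pi_*$-isomorphisms $(R_q X)_b \arr (R_q R X)_b \longleftarrow (R X)_b$. Applying $R_q$ --- which preserves level equivalences, as any functorial fibrant replacement does --- to the level equivalence $X \arr R X$ gives a level equivalence $R_q X \arr R_q R X$ of levelwise $q$-fibrant objects; on each level this is a weak equivalence of total spaces between $q$-fibrations over $B$, hence (by the previous paragraph) a weak equivalence on fibers, so $(R_q X)_b \arr (R_q R X)_b$ is a level equivalence, in particular a $\pi_*$-isomorphism, of orthogonal spectra. For the other leg, the natural level equivalence $R X \arr R_q R X$ has $(RX)_V \arr B$ a quasifibration and $(R_q R X)_V \arr B$ a $q$-fibration, so the fiberwise comparison again shows $(RX)_b \arr (R_q R X)_b$ is a level equivalence of orthogonal spectra.

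Finally, for $f \colon X \arr Y$ the zig-zag fits into a commutative diagram of orthogonal spectra
\[
\xymatrix{
(R_q X)_b \ar[r] \ar[d] & (R_q R X)_b \ar[d] & (R X)_b \ar[l] \ar[d] \\
(R_q Y)_b \ar[r] & (R_q R Y)_b & (R Y)_b \ar[l]
}
\]
whose horizontal maps are all $\pi_*$-isomorphisms, so $(R_q X)_b \arr (R_q Y)_b$ is a $\pi_*$-isomorphism if and only if $(RX)_b \arr (RY)_b$ is, for every $b$ --- which is precisely the claimed equivalence, for both choices of $R$. I expect the main obstacle to be the fiberwise comparison of the second paragraph: it is the sole point where one leaves the world of honest fibrations, and it is exactly where \cite{MS}*{6.5.1} and its ex-quasifibration counterpart carry the weight, ensuring that a level equivalence of total spaces really does detect the homotopy type of each individual fiber spectrum.
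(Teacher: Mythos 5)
Your argument is correct, and it holds together as a complete standalone proof. Note that the paper itself does not prove this lemma at all: it is quoted directly from May--Sigurdsson \cite{MS}*{12.3.4, 12.4.1}, so there is no internal argument to compare against; but your zig-zag $(R_qX)_b \arr (R_qRX)_b \longleftarrow (RX)_b$, reduced to the single fiberwise comparison fact that a weak equivalence of total spaces over $B$ between levelwise quasifibrations (in particular between a $qf$-fibration or ex-quasifibration and a $q$-fibration) is a weak equivalence on each fiber, is exactly the mechanism underlying the cited results, since \cite{MS}*{6.5.1} makes $qf$-fibrations quasifibrations and the paper's own definition of quasifibration (fiber includes into homotopy fiber by a weak equivalence) feeds directly into your two-out-of-three square. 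The one point you should make explicit is the ex-quasifibration case: what you need from \cite{MS}*{\S8.5} is precisely that the levelwise projections of an ex-quasifibrant replacement satisfy this fiber-to-homotopy-fiber condition, so that the same comparison applies verbatim; with that spelled out, both halves of the statement follow, and as a bonus your argument also shows the definition of stable equivalence is independent of the chosen levelwise $q$-fibrant replacement functor.
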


We remark that the fibrant spectra $X$ in the stable model structure have the property that every level $X_V$ is a $qf$-fibrant space and therefore a quasifibration. It follows that a map of fibrant spectra $X \arr Y$ is a stable equivalence precisely when each map of fibers $X_b \arr Y_b$ is an equivalence of orthogonal spectra. Since the fibers of a quasifibration over two points in the same path component of the base are weak homotopy equivalent, it suffices to check this condition only for a representative point $b \in B$ in each path component of $B$.

\section{$\cC$-spectra and spectra over $B\cC$}\label{sec:parametrized_intro}

As discussed in the introduction, Theorem \ref{thm:intro_quillen_equiv_G} concerns spectra with an action of a topological group $G$, but we will begin our proof by working more generally with diagrams of spectra indexed by a topological category $\cC$.  While this scope of generality is not necessary to prove the main theorem, it allows for some interesting philosophical observations (see Remarks \ref{rem1} and \ref{rem2}) and presents no more difficulties than in working with a group.  The reader is encouraged to keep in mind throughout this section the special case where $\cC$ has a single object and morphism space $G$.

Let $\cC$ be any small topological category. We emphasize that we consider $\cC$ to be enriched in unbased spaces, not based spaces. For technical ease, we assume that every space $\cC(c, d)$ of morphisms is a retract of a CW complex.  In particular, the inclusion of the identity map $* \arr \cC(c,c)$ is an $h$-cofibration of unbased spaces. We write $\Sp$ for the category of orthogonal spectra \cite{MMSS}, equipped with the smash product $\sma$ and internal function spectra $F(-, -)$ that make it a closed symmetric monoidal category.  A $\cC$-diagram of spectra, or more briefly a $\cC$-spectrum, is a topological functor $X \colon \cC \arr \Sp$, where the topological enrichment of $\Sp$ is given by
\[
\Sp(X, Y) = \Omega^{\infty}F(X, Y).
\]
We can also view $\cC$ as a category enriched in spectra by taking the suspension spectra $\Sigma^{\infty} \cC(c, d)_+$ of the morphism spaces in $\cC$.  A $\cC$-spectrum may then be equivalently described as a spectral functor from this spectral category to $\Sp$. When $\cC$ is a topological monoid $G$, a $\cC$-spectrum is the same thing as a spectrum with an action of $G$, or equivalently a module spectrum over the ring spectrum $\Sigma^{\infty}_{+} G = \Sigma^{\infty} G_{+}$.

We write $X(c)_{V}$ for the $V$-th level of the spectrum $X(c)$ associated to an object $c$ of $\cC$.  We will take care to distinguish between \emph{levels} and \emph{objects}, so that a level-wise property of $\cC$-spectra is detected by the $\cC$-spaces $X(-)_{V}$ for all $V$, and an object-wise property is detected by the spectra $X(c)$ for all objects $c$ of $\cC$.  Let $\cC\Sp = \Fun(\cC, \Sp)$ denote the category of $\cC$-diagrams of spectra and natural transformations of functors.  We equip $\cC\Sp$ with the projective $q$-model structure, in which weak equivalences and fibrations are detected object-wise in the stable model structure on spectra \cite{MMSS}*{\S9}. We recall that:
\begin{itemize}
\item a morphism $f \colon X \arr Y$ of $\cC$-spectra is a weak equivalence (respectively, fibration) if for each object $c \in \cC$, the map of spectra $f(c) \colon X(c) \arr Y(c)$ is a stable equivalence (respectively $q$-fibration of spectra), and
\item the cofibrations are retracts of relative cell complexes built by attaching cells of the form
\[
 F_{V} (i \colon S^{n - 1}_+ \arr D^n_+) \sma \cC(c, -)_+ 
\]
where $F_V$ denotes the shift desuspension functor from based spaces to orthogonal spectra (left adjoint to evaluation at level $V$).
\end{itemize}
We use the terms \emph{$q$-cofibrations} and \emph{$q$-fibrations} for the cofibrations and fibrations in the projective $q$-model structure.  The definition is arranged so that evaluating at an object $c$ of $\cC$ and at a level $V$ preserves $q$-fibrations.  


We will also use the category $\cC \Sp_{B} = \Fun(\cC, \Sp_{B})$ of $\cC$-diagrams of parametrized spectra over $B$.  We equip $\cC \Sp_{B}$ with the projective stable model structure, wherein weak equivalences and fibrations are detected in the stable model structure on $\Sp_{B}$ by the evaluation functors $M \longmapsto M(c)$ for each $c \in \ob \cC$.

The category $\Sp_B$ of parametrized spectra is enriched in $\Sp$ via the function spectrum objects $F_{B}(M, N)$ and is tensored over $\Sp$ via the half-parametrized smash product $M \osma X$ \cite{MS}*{11.4.10}, so that there is an adjunction
\[
\Sp_{B}( M \osma X, N) \cong \Sp(X, F_{B}(M, N)).
\]

Now suppose that $X$ is a (non-parametrized) $\cC$-spectrum and $M$ is a $\cC^{\op}$-diagram in $\Sp_{B}$.  The tensor product of functors $M \osma_{\cC} X$ is defined to be the coend of the functor \[
M(-) \osma X(-) \colon \cC \times \cC^{\op} \arr \Sp_{B}.
\]
In other words, $M \osma_{\cC} X$ is the coequalizer of the diagram of parametrized spectra
\[
\xymatrix{
\underset{c, d \in \ob \cC}{\bigvee} M(d) \osma \cC(c, d)_+ \sma X(c) \ar@<.5ex>[r]  \ar@<-.5ex>[r] & \underset{d \in \ob \cC}{\bigvee} M(d) \osma X(d)
}
\]
defined by the covariant and contravariant functorialities of $X$ and $M$.  

\begin{lemma}\label{lem:pushout_product}
Let $i \colon M \arr N$ be a map of $\cC^{\op}$-spectra over $B\cC$, and let $j \colon X \arr Y$ be a map of $\cC$-spectra.  If $i$ is a $qf$-cofibration and $j$ is a $q$-cofibration, then the pushout product 
\[
i \boxempty j \colon (N \osma_{\cC} X) \cup_{M \osma_{\cC} X} (M \osma_{\cC} Y) \arr N \osma_{\cC} Y
\]
is a $qf$-cofibration of spectra over $B\cC$, and if either $i$ or $j$ is acyclic, then $i \boxempty j$ is as well.
\end{lemma}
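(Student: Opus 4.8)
The plan is to reduce the statement to a pushout-product axiom for the two model structures involved, exploiting that everything in sight is cofibrantly generated and that the functor $- \osma_{\cC} -$ is a left adjoint in each variable separately. First I would recall that, by the general theory of cofibrantly generated model categories, it suffices to verify the conclusion when $i$ and $j$ are generating (acyclic) cofibrations: the class of maps $i$ for which $i \boxempty j$ is a $qf$-cofibration for all $q$-cofibrations $j$ is closed under pushouts, transfinite composition, and retracts (because $- \osma_{\cC} X$ preserves colimits and the $qf$-cofibrations of parametrized spectra over $B\cC$ form a saturated class), and symmetrically in $j$. So we are reduced to the case $i = F_V i(p)_+$ with $i(p) \in \qfI$ a generating $qf$-cofibration of $\cC^{\op}$-spectra over $B\cC$ — that is, $i = F_V(S^{n-1}_+ \to D^n_+) \sma \cC(-,d)_+$ parametrized by a map $p$ with $S^{n-1}$ an $f$-cofibration in $D^n$ — and $j = F_W(S^{m-1}_+ \to D^m_+)\sma \cC(c,-)_+$ a generating $q$-cofibration of $\cC$-spectra.

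Next I would compute $M \osma_{\cC} X$ for these free cells. By the coYoneda/enriched-Yoneda reduction, $\left(F_V A_+ \sma \cC(-,d)_+\right) \osma_{\cC} \left(F_W B_+ \sma \cC(c,-)_+\right)$ collapses via the coend to a single free parametrized cell $F_{V\oplus W}(A \sma B)_+ \sma \cC(c,d)_+$, where $A \sma B$ now means the fiberwise smash of the parametrized disk $D^n$ (over $B\cC$ via $p$) with the based space $D^m_+$. Here I would use that the shift-desuspension functors satisfy $F_V \sma F_W \cong F_{V \oplus W}$ on smash products, and that smashing with a fixed based space $\cC(c,d)_+$ commutes with $F_{V\oplus W}(-)$. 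Under this identification the pushout-product $i\boxempty j$ becomes $F_{V\oplus W}$ applied to $\cC(c,d)_+$ smashed with the pushout-product in parametrized spaces of the $f$-cofibration $S^{n-1} \to D^n$ (over $B\cC$) against the $q$-cofibration $S^{m-1}_+ \to D^m_+$ of based spaces.

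So the crux becomes: the pushout-product of an $f$-cofibration with a relative CW inclusion is again a $qf$-cofibration of parametrized spaces, and it is acyclic when either factor is. This is precisely the pushout-product property of the $qf$-model structure on $\Top_{B\cC}$ together with the interaction of $f$-cofibrations and $q$-cells that May--Sigurdsson establish; I would cite \cite{MS}*{\S6} for the statement that $\qfI_+ \boxempty \qI_+ \subseteq \qfI_+$-cofibrations and the corresponding acyclicity (using that $qf$-fibrations are quasifibrations, as recalled in \S\ref{sec:cells}, so that the gluing lemma applies on total spaces). Finally I would pass back up to spectra: the whole $qf$-cofibration structure is preserved by $F_{V \oplus W}$ and by smashing with $\cC(c,d)_+$ (the latter because $\cC(c,d)$ is a retract of a CW complex, so smashing with $\cC(c,d)_+$ preserves $qf$-cofibrations and acyclic $qf$-cofibrations of parametrized spectra), and reassemble to get the result for generating cells, hence for all $i$, $j$ by the saturation argument.

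The main obstacle I anticipate is the acyclicity claim rather than the cofibration claim: verifying that $i\boxempty j$ is a \emph{stable equivalence} when, say, $j$ is an acyclic $q$-cofibration of $\cC$-spectra requires knowing that $- \osma_{\cC} X$ takes acyclic cofibrations to stable equivalences, which in the parametrized setting is exactly the kind of gluing-lemma argument over cell complexes that fails for $h$-cofibrations and only works because we have restricted to $qf$-cells (so that all the cell attachments are $f$-cofibrations on total spaces, making the relevant pushouts homotopy pushouts). I would therefore be careful to run the induction over the $qf$-cell structure of $X$ (or $Y$), invoking Lemma \ref{lem:different_fib_replace} to detect stable equivalences fiberwise and the fact that $qf$-fibrations are quasifibrations to control the fibers, exactly paralleling the argument May--Sigurdsson use to build the stable model structure in the first place.
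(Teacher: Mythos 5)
Your overall skeleton is the paper's: reduce to generators using that $\osma_{\cC}$ preserves colimits in each variable, collapse the coend by the co-Yoneda argument so that only a $\cC(c,d)_+$ factor survives, peel that factor off using the standing hypothesis that the morphism spaces are retracts of CW complexes, and then quote May--Sigurdsson. The paper's proof is exactly this, except that the final citation is the spectrum-level statement \cite{MS}*{12.6.5} that the half-smash $\osma \colon \Sp_{B\cC} \times \Sp \arr \Sp_{B\cC}$ satisfies the full pushout-product axiom (cofibration and acyclicity clauses) for the stable $qf$-structure, which settles everything at once.

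Where your write-up has a genuine gap is in the acyclic half of the statement. Your explicit identification of the generators as cells $F_V(S^{n-1}_+ \arr D^n_+)\sma \cC(-,d)_+$ and $F_W(S^{m-1}_+ \arr D^m_+)\sma \cC(c,-)_+$ is correct only for the generating \emph{cofibrations}. The generating \emph{acyclic} cofibrations of the stable model structures are not of sphere-to-disk form: for orthogonal spectra they include the mapping-cylinder-type maps of MMSS (the $K$-maps built from $k_{V,W}$), and for the stable $qf$-structure on $\Sp_{B\cC}$ the paper deliberately never writes them down. Consequently your space-level reduction via $F_{V\oplus W}$ to the pushout-product of an $f$-cofibration of disks against a CW pair simply does not apply to the cases where $i$ or $j$ is an acyclic generator. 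You sense the problem (``the main obstacle \dots is the acyclicity claim''), but the remedy you sketch --- an induction over the $qf$-cell structure of $X$ or $Y$ using quasifibration detection of stable equivalences --- is not carried out and is aimed at the wrong difficulty: the issue is not gluing along cells but the shape of the stable acyclic generators. The fix is to notice that the coend-collapse step never needs the generators' explicit form: every generating (acyclic) cofibration of the projective structure is a generating (acyclic) cofibration of $\Sp_{B\cC}$ (respectively of $\Sp$) smashed with a representable, so the coend always collapses to a pushout-product for the external half-smash, smashed with $\cC(c,d)_+$; the lemma, acyclicity included, then reduces to \cite{MS}*{12.6.5} together with the CW-retract hypothesis on $\cC(c,d)$.
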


\begin{proof}
Since the coend $\osma_{\cC}$ preserves colimits in each variable separately, so does the pushout-product.  Thus it suffices to prove the claim for the generating cofibrations and generating acyclic cofibrations. Assume that $M \arr N$ is of the form $i \osma \cC(-,d)_+$ where $(i \colon A \arr B) \in \qfI_{\Sp}$ is a generating $qf$-cofibration in $\Sp_{B\cC}$, and $X \arr Y$ is of the form $j \sma \cC(c,-)_+ $ where $j \colon C \arr D$ is a generating $q$-cofibration of orthogonal spectra. The pushout-product then becomes
\[
i \boxempty j \colon [(B \osma C) \cup_{A \osma C} (A \osma D) \arr A \osma D] \sma \cC(c,d)_+ 
\]
and since the spaces $\cC(c, d)$ are retracts of CW complexes this reduces to the same claim for the external smash product of a spectrum over $B\cC$ with an ordinary spectrum, found in \cite{MS}*{12.6.5}.
\end{proof}

Let $E\cC$ denote the $\cC^{\op}$-diagram of spaces whose value on an object $c$ of $\cC$ is the two-sided bar construction 
\[
E\cC^{c} = B(\ast, \cC, \cC(c, -)) = \Bigl \vert [q] \longmapsto \coprod_{c_0, \dotsc, c_q \in \ob \cC} \cC(c_{q - 1}, c_q) \times \dotsm \times \cC(c_0, c_1) \times \cC(c, c_0) \Bigr\vert.
\]
Each of the spaces $E\cC^c$ is contractible, by the usual contracting homotopy argument. The collapse map $\cC(c, -) \arr \ast$ induces a map $\pi \colon E\cC^{c} \arr B \cC$, so we may consider $E\cC$ as a $\cC^{\op}$-diagram in spaces over $B\cC$.  Of course, in the special case of a topological group $G$, the map $\pi$ is the usual projection $EG \arr BG$ with a fiberwise $G$-action.

\begin{lemma}
$E\cC$ is a $q$-cofibrant diagram in the projective model structure on $\cC^{\op}\Top/B\cC$.
\end{lemma}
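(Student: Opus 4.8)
The plan is to exhibit $E\cC$ as the two-sided bar resolution of the terminal $\cC^{\op}$-diagram and to invoke the standard fact that such resolutions are cofibrant. The first move is to observe that the map to $B\cC$ is irrelevant to $q$-cofibrancy: colimits in $\cC^{\op}\Top/B\cC$ are created by the forgetful functor to $\cC^{\op}\Top$, and a relative cell complex in $\cC^{\op}\Top$ built from the maps $i \times \cC(-,c)$ with $i \in \qI$ can always be promoted to one in $\cC^{\op}\Top/B\cC$ --- the maps $D^n \arr B\cC$ on the attached cells being free choices --- compatibly with retracts. Hence an object of $\cC^{\op}\Top/B\cC$ is $q$-cofibrant as soon as its underlying $\cC^{\op}$-diagram of spaces is, and it suffices to prove that $E\cC$ is $q$-cofibrant in the projective model structure on $\cC^{\op}\Top$ built from the Quillen--Serre model structure on $\Top$.

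Next I would identify $E\cC$ with the geometric realization of the simplicial $\cC^{\op}$-diagram of spaces $B_\bullet$ with
\[
B_q = \coprod_{c_0, \dotsc, c_q} \cC(c_{q - 1}, c_q) \times \dotsm \times \cC(c_0, c_1) \times \cC(-, c_0),
\]
in which the diagram variable lies only in the representable factor $\cC(-, c_0)$. Because geometric realization sends Reedy-cofibrant simplicial objects --- with respect to the projective $q$-model structure applied levelwise --- to cofibrant objects, it is enough to prove $B_\bullet$ is Reedy cofibrant. Each $B_q$ is a coproduct, over tuples $(c_0, \dotsc, c_q)$, of the free $\cC^{\op}$-diagram on the space $\cC(c_{q-1}, c_q) \times \dotsm \times \cC(c_0, c_1)$ generated at the object $c_0$; this space is a finite product of retracts of CW complexes, hence $q$-cofibrant in $\Top$, so the free diagram on it --- and therefore the coproduct --- is projectively $q$-cofibrant.

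The heart of the argument is the claim that each latching map $L_q \arr B_q$ is a projective $q$-cofibration. I would treat it one summand at a time: inside the summand indexed by $(c_0, \dotsc, c_q)$, the degenerate locus sits in the product of morphism-space factors $\cC(c_{q-1}, c_q) \times \dotsm \times \cC(c_0, c_1)$ as a finite union of slices cut out by inserting identity morphisms. Each inclusion $\ast \arr \cC(c,c)$ is the inclusion of a point into a retract of a CW complex, hence a $q$-cofibration of spaces, so an iterated gluing-lemma argument identifies the inclusion of this union of slices as a $q$-cofibration of spaces; applying the left Quillen functor ``free $\cC^{\op}$-diagram generated at $c_0$'' and forming the coproduct over all tuples then shows $L_q \arr B_q$ is a projective $q$-cofibration. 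This combinatorial bookkeeping over the degenerate loci --- and the vigilance needed to keep these inclusions genuine cofibrations in the projective model structure rather than mere Hurewicz cofibrations --- is the step I expect to be the main obstacle; the remaining verifications are formal. Granting it, the realization $E\cC$ is a $q$-cofibrant $\cC^{\op}$-diagram of spaces, and by the first paragraph it is $q$-cofibrant over $B\cC$.
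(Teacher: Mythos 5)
Your argument is essentially the paper's own: reduce to the underlying diagram category, prove Reedy cofibrancy of the bar construction by showing each latching map is a projective $q$-cofibration, and identify that latching map summand-by-summand as an iterated pushout-product of $\emptyset \arr \cC(c_{i-1},c_i)$ and $\ast \arr \cC(c_i,c_i)$ (your ``union of degenerate slices'') crossed with the free diagram $\cC(-,c_0)$ --- the paper simply cites \cite{malk_cyclotomic}*{4.8} for this bookkeeping rather than redoing it. The proposal is correct and matches the paper's proof in all essentials, including the same appeal to the standing hypothesis on $\cC$ to make $\ast \arr \cC(c,c)$ a $q$-cofibration.
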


\begin{proof}
We recall that the projective model structure for diagrams of unbased spaces $\cC^{\op}\Top$ has a similar definition to the one for spectra given above, except that all the products are Cartesian products of unbased spaces instead of smash products of based spaces. It suffices to show that the latching maps for the above simplicial space are $q$-cofibrations when regarded as maps in $\cC^{\op}\Top$. The action of $\cC$ preserves the coproduct over the tuples of objects $c_0, \dotsc, c_q \in \ob \cC$, so we may focus on one such tuple. Arguing as in \cite[4.8]{malk_cyclotomic}, the latching map is an iterated pushout-product of the maps $\emptyset \arr \cC(c_{i-1},c_i)$ and ${*} \arr \cC(c_i,c_i)$ (which are $q$-cofibrations by our assumption on $\cC$), times the free $\cC^{\op}$-diagram $\cC(-,c_0)$. This gives a product of a $q$-cofibration of unbased spaces and a free diagram, which is a $q$-cofibration of diagrams of unbased spaces.
\end{proof}

We warn the reader that in general $E\cC$ is \emph{not} a $qf$-cell complex, and may not be $qf$-cofibrant. This can be checked by hand for the double cover $E \bZ/2 \arr B\bZ/2$. We also warn that the projection $\pi$ is often not a fibration, or even a quasifibration, when $\cC$ is not a topological group.

By adding a disjoint section and taking the fiberwise suspension spectrum, we obtain a cofibrant $\cC^{\op}$-diagram of parametrized spectra $\Sigma^{\infty}_{B\cC} E\cC_+$, which we allow ourselves to abbreviate to $E\cC_+$.  We write $E'\cC_+ = \Sigma^{\infty}_{B\cC} B(\ast, \cC, \cC)^{qf\mathrm{-cof}}_+$ for a $qf$-cofibrant approximation of it in the projective stable model structure on $\cC^{\op} \Sp_{B\cC}$.  The cofibrant approximation may be calculated by first taking a $qf$-cofibrant approximation of $E\cC$ in $\cC^{\op}\Top/B \cC$, then applying the disjoint section and fiberwise suspension spectrum functor. 

Let $\beta \colon \cC\Sp \arr \Sp_{B\cC}$ denote the functor which takes a $\cC$-diagram $X$ to the parametrized spectrum 
\[
\beta(X) = 
E'\cC_+ \osma_{\cC} X,
\]
and similarly on morphisms. Note that we have implicitly taken the fiberwise suspension spectrum of $E'\cC_+$.  Alternatively, we use the space-level variant of $\osma$ to take the coend of the ex-space $E'\cC_+$ and each spectrum level of $X$, and this gives the same result.


We write $E'\cC^{c}_{+}$ for the parametrized spectrum obtained by evaluating the $\cC^{\op}$-diagram $E'\cC_+$ at $c$.
The functor $\beta$ has a right adjoint $\Phi$ which takes a parametrized spectrum $M$ over $B\cC$ to the $\cC$-spectrum
\[
c \longmapsto F_{B\cC}( 
E'\cC^{c}_+, M).
\]
The notation for the adjunction $(\beta, \Phi)$ is meant to suggest \emph{Borel} construction and homotopy \emph{fiber}.  The following is a consequence of Lemma \ref{lem:pushout_product}.

\begin{theorem}
The adjunction $(\beta, \Phi) \colon \cC\Sp \rightleftarrows \Sp_{B\cC}$ is a Quillen adjunction with respect to the projective $q$-model structure on $\cC\Sp$ and the stable model structure on $\Sp_{B\cC}$.  
\end{theorem}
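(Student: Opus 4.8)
The plan is to verify that the left adjoint $\beta$ is a left Quillen functor --- that it carries $q$-cofibrations of $\cC$-spectra to $qf$-cofibrations of spectra over $B\cC$, and acyclic ones to acyclic ones --- which is exactly what is needed for $(\beta,\Phi)$ to be a Quillen adjunction. As the sentence preceding the statement suggests, essentially all of the work has already been isolated in Lemma~\ref{lem:pushout_product}, and the remaining task is to recognize $\beta$ as a special case of the pushout-product appearing there.

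First I would observe that for \emph{any} morphism $j \colon X \arr Y$ of $\cC$-spectra, the map $\beta(j) = \id_{E'\cC_+} \osma_{\cC} j$ is literally the pushout-product $i \boxempty j$, where $i$ denotes the inclusion $\emptyset \arr E'\cC_+$ of the initial $\cC^{\op}$-diagram of parametrized spectra. This is immediate because $\osma_{\cC}$ preserves colimits separately in each variable, so $\emptyset \osma_{\cC} X$ and $\emptyset \osma_{\cC} Y$ are both initial and the domain of $i \boxempty j$ collapses to $E'\cC_+ \osma_{\cC} X$. Then I would invoke the fact that $E'\cC_+$ was constructed precisely to be $qf$-cofibrant in the projective stable model structure on $\cC^{\op}\Sp_{B\cC}$, so that $i$ is a $qf$-cofibration of $\cC^{\op}$-spectra over $B\cC$ in the sense of Lemma~\ref{lem:pushout_product}. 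Applying that lemma to $i$ together with an arbitrary $q$-cofibration $j$ then shows at once that $\beta(j)$ is a $qf$-cofibration of spectra over $B\cC$, and that it is moreover a stable equivalence whenever $j$ is. Hence $\beta$ is left Quillen, and the theorem follows.

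Because the argument reduces so directly to Lemma~\ref{lem:pushout_product}, I do not expect a real obstacle at this stage: the genuine difficulty --- that the generating acyclic cofibrations of $\Sp_{B\cC}$ must be built from $qf$-cells rather than $q$-cells, since the gluing lemma fails for the latter (cf.\ \S\ref{sec:cells}) --- has already been confronted inside Lemma~\ref{lem:pushout_product} via \cite{MS}*{12.6.5}. The only things to be careful about are the formal identification $\beta(j) = i \boxempty j$ and the insistence on the $qf$-cofibrant model $E'\cC_+$: had one used the genuine bar construction $E\cC_+$ --- which is not $qf$-cofibrant, as noted after its construction --- the functor $E\cC_+ \osma_{\cC}(-)$ would not be known to be left Quillen. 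An alternative, slightly longer route would check the Quillen condition on the generating (acyclic) cofibrations $j \sma \cC(c,-)_+$ of the projective $q$-model structure, using the co-Yoneda identification $\beta\bigl(j \sma \cC(c,-)_+\bigr) \cong E'\cC^c_+ \osma j$ and the external-smash pushout-product of \cite{MS}*{12.6.5}; but invoking Lemma~\ref{lem:pushout_product} directly is cleaner.
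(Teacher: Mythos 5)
Your proposal is correct and takes essentially the same route as the paper, which offers no separate argument beyond declaring the theorem a consequence of Lemma~\ref{lem:pushout_product}: identifying $\beta(j)$ with the pushout-product of $j$ against the cofibration from the initial object into the projectively $qf$-cofibrant diagram $E'\cC_+$ is exactly the intended deduction. The only cosmetic point is that the initial object of $\cC^{\op}\Sp_{B\cC}$ is the zero parametrized spectrum (the base space $B\cC$ at every level) rather than a literal $\emptyset$, but this does not affect the argument.
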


In \S\ref{sec:groupoid}, we will prove that $(\beta, \Phi)$ is a Quillen equivalence when $\cC$ is a groupoid up to homotopy.  In preparation for our later work, we now give a precise meaning to the idea that $\Phi$ plays the role of the homotopy fiber. 

The identity morphism of an object $c$ of $\cC$ determines a basepoint $* \in E\cC^{c}$, which is non-degenerate by our hypotheses on $\cC$.  Choose a lift of the basepoint along the $qf$-cofibrant approximation map $\alpha \colon E'\cC^{c} \arr E\cC^{c}$.  
Given a parametrized spectrum $M$ over $B\cC$, restriction to this basepoint defines a map 
\[
\rho \colon F_{B\cC} (E'\cC^{c}_+, M) \arr M_{[c]}
\]
to the fiber of $M$ over the point $[c] \in B\cC$ determined by the object $c$.

\begin{lemma}\label{lemm:rho_equiv}
If $M$ is a $qf$-fibrant spectrum over $B\cC$, then the restriction map $\rho$ is a stable equivalence of spectra.
\end{lemma}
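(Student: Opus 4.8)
The plan is to prove the statement one spectrum level at a time, reducing it to a statement about mapping spaces of ex-spaces for the $qf$-model structure on $\Top_{B\cC}$. Write $j_c \colon (\{[c]\})_+ \arr (E'\cC^c)_+$ for the inclusion of ex-spaces over $B\cC$ determined by the chosen lift of the basepoint along $\alpha$. Since $E'\cC^c_+ = \Sigma^\infty_{B\cC}((E'\cC^c)_+)$ and $\Sigma^\infty_{B\cC} = F_0$ is left adjoint to evaluation at level $0$, unravelling the definition of the internal function spectrum identifies the $V$-th space of $F_{B\cC}(E'\cC^c_+, M)$ with the ex-space mapping space $F_{B\cC}((E'\cC^c)_+, M_V)$; similarly $F_{B\cC}((\{[c]\})_+, M_V)$ is the fiber $(M_V)_{[c]} = (M_{[c]})_V$, since a map of ex-spaces out of $(\{[c]\})_+$ is the same as a point of the fiber of $M_V$ over $[c]$. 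Under these identifications $\rho$ is, levelwise, the restriction map $F_{B\cC}(j_c, M_V)$, so it suffices to show each of these is a weak homotopy equivalence of spaces.

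To that end I would assemble three facts. First, $j_c$ is a weak equivalence in the $qf$-model structure on $\Top_{B\cC}$: on total spaces it is $\{[c]\}\sqcup B\cC \hookrightarrow E'\cC^c \sqcup B\cC$, so the content is that $\{[c]\}\hookrightarrow E'\cC^c$ is a weak equivalence, and this holds because $E'\cC^c$ is a $qf$-cofibrant approximation of the contractible space $E\cC^c$ and is therefore weakly contractible. Second, both ends of $j_c$ are $qf$-cofibrant: the source $(\{[c]\})_+$ is a single $\qfI_{+}$-cell, coming from the element $\emptyset \arr D^0$ of $\qfI$ sitting over the point $[c]$, and the target $(E'\cC^c)_+$ is $qf$-cofibrant by construction. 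Third, each level $M_V$ of $M$ is a $qf$-fibrant ex-space, by hypothesis on $M$.

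Given these, I would invoke the fact that the $qf$-model structure on parametrized spaces is a topological model category (\cite{MS}*{\S6}): the functor $F_{B\cC}(-, M_V)$ carries weak equivalences between $qf$-cofibrant ex-spaces to weak homotopy equivalences, by Ken Brown's lemma. Applying this to $j_c$ shows that $\rho_V = F_{B\cC}(j_c, M_V)$ is a weak homotopy equivalence for every $V$. Hence $\rho$ is a level equivalence of orthogonal spectra, and in particular a stable equivalence.

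The step I expect to require the most care is the levelwise identification in the first paragraph: that the internal function spectrum $F_{B\cC}(E'\cC^c_+, M)$ has $V$-th space the ex-space mapping space $F_{B\cC}((E'\cC^c)_+, M_V)$ --- i.e. that $\Sigma^\infty_{B\cC}$ and the internal function spectrum on $\Sp_{B\cC}$ interact as expected --- together with the fact that these mapping spaces compute derived mapping spaces, which is precisely the SM7-type compatibility of $\osma$ and $F_{B\cC}$ with the $qf$-model structures that May--Sigurdsson establish. A minor additional point is that although the chosen basepoint lift $j_c$ need not itself be a cofibration, it is a weak equivalence between $qf$-cofibrant objects, which is all Ken Brown's lemma requires.
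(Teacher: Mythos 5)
Your proof is correct, but it takes a genuinely different route from the paper's. The paper makes the same first reduction (identifying the $V$-th space of $F_{B\cC}(E'\cC^{c}_+, M)$ with the space of maps over $B\cC$ from $E'\cC^{c}$ to $M_V$), but then proves the space-level statement in two stages (Lemma \ref{lemm:rho_equiv_space_level}): first for $h$-fibrations $Y \arr B\cC$ mapped out of the unreplaced $E\cC^{c}$, via a Shulman-style argument converting the $h$-cofibration and homotopy equivalence $\ast \arr E\cC^{c}$ together with the $h$-fibration $Y$ into an acyclic $h$-fibration of mapping spaces; and then for $qf$-fibrations by passing to an $h$-fibrant replacement $Y'$ and comparing along $\alpha \colon E'\cC^{c} \arr E\cC^{c}$, using that both are $q$-cofibrant and that $F_{B\cC}(E'\cC^{c}, -)$ preserves equivalences of $qf$-fibrant objects. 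You instead exploit that the single point over $[c]$ is itself a $\qfI_{+}$-cell, so the basepoint inclusion $j_c$ is a weak equivalence between $qf$-cofibrant ex-spaces, and you conclude by the enriched (SM7) compatibility of the $qf$-model structure on $\Top_{B\cC}$ together with Ken Brown's lemma; this is shorter and avoids $h$-fibrations entirely. Two small points of comparison: your argument, like the paper's, tacitly uses that evaluating the projectively $qf$-cofibrant diagram $E'\cC$ at $c$ yields a $qf$-cofibrant ex-space, which holds because the generating projective cofibrations evaluate to maps of the form $i \osma \cC(c,d)_+$ with $\cC(c,d)$ a retract of a CW complex, and these are $qf$-cofibrations by the pushout-product property of the $qf$-structure; and the paper's detour through $h$-fibrations is not wasted, since part (i) of Lemma \ref{lemm:rho_equiv_space_level} (the statement for $E\cC^{c}$ and the map $\tilde\rho$) is exactly what is reused in the proof of Proposition \ref{prop:two_claims} to identify the fiber of the counit, so your argument establishes the present lemma but does not replace that auxiliary statement.
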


In fact, we will prove that it is a level equivalence of spectra. Evaluating at an indexing space $V$, the domain is given by the space $\Map_{B\cC}(E'\cC, M(V))$ of maps of spaces over $B\cC$.  Thus it suffices to prove the corresponding result for parametrized spaces, which is the second part of the next lemma.

\begin{lemma}\hspace{2in}\label{lemm:rho_equiv_space_level}
\begin{itemize}
\item[(i)] If $Y \arr B\cC$ is an $h$-fibration, then the map
\[
\tilde\rho \colon \Map_{B\cC}(E\cC^{c}, Y) \arr Y_{[c]}
\]
induced by restriction to the basepoint of $E\cC^{c}$ is a weak homotopy equivalence of spaces.
\item[(ii)] If $Y \arr B\cC$ is a $qf$-fibration, then the restriction map
\[
\rho \colon \Map_{B\cC}(E'\cC^{c}, Y) \arr Y_{[c]}
\]
is a weak homotopy equivalence of spaces.
\end{itemize}
\end{lemma}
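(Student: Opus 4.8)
The plan is to treat the two parts by separate means: part (i) by the mapping-space behaviour of Hurewicz fibrations over a contractible base, and part (ii) by a comparison of mapping spaces in the $qf$-model structure on $\Top/B\cC$, with the real work going into the $qf$-cofibrancy of $E'\cC^{c}$. For (i), I would pull $Y$ back along $\pi \colon E\cC^{c} \arr B\cC$ to an $h$-fibration $\pi^{*}Y \arr E\cC^{c}$ and identify $\Map_{B\cC}(E\cC^{c}, Y)$ with the section space $\Sec(E\cC^{c}, \pi^{*}Y)$, so that $\tilde\rho$ becomes restriction to the basepoint $x_{0} = [\id_{c}]$ followed by the homeomorphism $(\pi^{*}Y)_{x_{0}} \cong Y_{[c]}$. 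Since $x_{0}$ is non-degenerate and $E\cC^{c}$ is contractible, $\{x_{0}\} \hookrightarrow E\cC^{c}$ is an $h$-cofibration that is also a homotopy equivalence. The standard mapping-space behaviour of Hurewicz fibrations (e.g.\ \cite{MS}) then shows that restriction along this acyclic $h$-cofibration into the $h$-fibration $\pi^{*}Y$ induces an acyclic Hurewicz fibration $\Sec(E\cC^{c}, \pi^{*}Y) \arr (\pi^{*}Y)_{x_{0}}$; hence $\tilde\rho$ is a homotopy equivalence, in particular a weak homotopy equivalence.

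For (ii), I would argue directly in the $qf$-model structure on $\Top/B\cC$, which is a topological model category \cite{MS}. Let $(\ast)_{[c]}$ denote the one-point space over $[c] \in B\cC$; it is $qf$-cofibrant, being obtained by attaching the single $0$-cell $(\emptyset \arr \ast)$ over $[c]$, which lies in $\qfI$. The chosen lift of the basepoint is a map $x_{0}' \colon (\ast)_{[c]} \arr E'\cC^{c}$ over $B\cC$, and it is a weak equivalence: its composite with the $qf$-cofibrant approximation $\alpha \colon E'\cC^{c} \arr E\cC^{c}$ is the basepoint inclusion $\ast \arr E\cC^{c}$, a weak homotopy equivalence since $E\cC^{c}$ is contractible, so $x_{0}'$ is a weak equivalence by two-out-of-three, weak equivalences in $\Top/B\cC$ being detected on total spaces. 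Granting that $E'\cC^{c}$ is itself $qf$-cofibrant as a space over $B\cC$ (see below), $x_{0}'$ is then a weak equivalence between $qf$-cofibrant objects, and since $Y$ is $qf$-fibrant the induced map
\[
\rho \colon \Map_{B\cC}(E'\cC^{c}, Y) \arr \Map_{B\cC}((\ast)_{[c]}, Y) = Y_{[c]}
\]
is a weak homotopy equivalence, the derived mapping space into a fibrant object being invariant under weak equivalences of cofibrant sources. This is exactly the map $\rho$ of the lemma.

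It remains to show $E'\cC^{c}$ is $qf$-cofibrant in $\Top/B\cC$. By construction $E'\cC$ is $qf$-cofibrant in $\cC^{\op}\Top/B\cC$ with the projective model structure, so $\emptyset \arr E'\cC$ is a retract of a relative cell complex built from the generating projective cofibrations $i \otimes \cC(-,d)$ with $i \in \qfI$ and $d \in \ob\cC$. Evaluation at $c$ preserves colimits and the initial object, and $\mathrm{ev}_{c}(i \otimes \cC(-,d)) = i \times \id_{\cC(c,d)}$ is the pushout-product of $i$ with $\emptyset \arr \cC(c,d)$; since $\cC(c,d)$ is a retract of a CW complex, the latter map is a $qf$-cofibration of unbased spaces, so by the pushout-product property for external products of $qf$-model structures --- the space-level analogue of Lemma \ref{lem:pushout_product}, cf.\ \cite{MS}*{\S6} --- the map $i \times \id_{\cC(c,d)}$ is a $qf$-cofibration over $B\cC \times \ast = B\cC$. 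Thus $\mathrm{ev}_{c}$ carries projective cell complexes to $qf$-cell complexes and $E'\cC^{c}$ is $qf$-cofibrant. I expect this last step --- the bookkeeping that evaluation of a projectively cofibrant diagram at an object is $qf$-cofibrant, which is precisely where the retract-of-CW hypothesis on $\cC$ and the collar-neighbourhood subtleties in the definition of a $qf$-cofibration enter --- to be the main obstacle.
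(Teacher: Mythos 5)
Your part (i) is essentially the paper's own argument: both play the acyclic $h$-cofibration $\ast \arr E\cC^{c}$ off against an $h$-fibration, the paper via the pullback-hom map $\Map(E\cC^{c}, Y) \arr \Map(\ast, Y) \times_{\Map(\ast, B\cC)} \Map(E\cC^{c}, B\cC)$ followed by base change to the fiber, and you via the section space of $\pi^{*}Y$; up to the identification $\Map_{B\cC}(E\cC^{c},Y) \cong \Sec(E\cC^{c},\pi^{*}Y)$ these are the same acyclic $h$-fibration. For part (ii), however, you take a genuinely different route. The paper reduces (ii) to (i) by replacing $Y$ with an $h$-fibration $Y'$ and comparing $\Map_{B\cC}(E'\cC^{c},Y)$, $\Map_{B\cC}(E'\cC^{c},Y')$, $\Map_{B\cC}(E\cC^{c},Y')$ and the fibers in a square, using that $E\cC^{c}$ and $E'\cC^{c}$ are $q$-cofibrant while $Y'$ is a $q$-fibration, and that $F_{B\cC}(E'\cC^{c},-)$ preserves weak equivalences between $qf$-fibrant objects. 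You instead argue entirely inside the $qf$-model structure on $\Top/B\cC$: the one-point object $(\ast)_{[c]}$ and $E'\cC^{c}$ are $qf$-cofibrant, the lifted basepoint is a weak equivalence by two-out-of-three, and $\Map_{B\cC}(-,Y)$ into the $qf$-fibrant $Y$ is homotopy invariant on cofibrant sources because the $qf$-model structure is topological (\cite{MS}*{\S6}). Your supporting claim that evaluating a projectively $qf$-cofibrant $\cC^{\op}$-diagram at $c$ yields a $qf$-cofibrant space over $B\cC$ is correct: evaluation sends the generating cofibrations $i(p) \times \cC(-,d)$ to $i(p) \times \cC(c,d)$, which is a $qf$-cofibration by the external pushout-product property since $\cC(c,d)$ is a retract of a CW complex, and evaluation preserves colimits and retracts. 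It is worth noting that the paper implicitly relies on essentially the same cofibrancy fact when it asserts that $F_{B\cC}(E'\cC^{c},-)$ preserves weak equivalences between $qf$-fibrant spaces, so you have not smuggled in anything the paper avoids. What your route buys is a shorter, purely model-categorical proof of (ii) with the cofibrancy bookkeeping made explicit; what the paper's route buys is that it keeps the comparison with the unreplaced bar construction $E\cC^{c}$ in the foreground, which is the form (via $\tilde\rho$ of part (i)) that is reused in the proof of Proposition \ref{prop:two_claims}.
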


\begin{proof}
For part (i) we follow Shulman's proof of \cite[Lem. 8.3]{Sh08}.  Here we use the fact that the inclusion of the basepoint $\ast \arr E\cC^{c}$ is an $h$-cofibration.  Since the codomain is contractible, the inclusion is also a homotopy equivalence, and thus, together with the $h$-fibration $Y \arr B\cC$, it induces an acyclic $h$-fibration
\[
\Map(E\cC^{c}, Y) \arr \Map(*, Y) \times_{\Map(*, B\cC)} \Map(E\cC^{c}, B\cC) \cong Y \times_{B\cC} \Map(E\cC^{c}, B\cC)
\]
on mapping spaces.  Pulling back along the inclusion $y \mapsto (y, \pi)$ of the fiber $Y_{[c]}$ into $Y \times_{B\cC} \Map(E\cC^{c}, B\cC)$ gives the restriction map $\tilde\rho$, which is therefore an acyclic $h$-fibration.

We prove part (ii) by reducing to part (i). Let $Y \arr Y'$ be an approximation of $Y$ by an $h$-fibration over $B\cC$, say by the usual mapping space construction.  The approximation map induces the upwards pointing maps in the next commutative diagram
\[
\xymatrix{
\Map_{B\cC}(E\cC^{c}, Y') \ar[d]_{\simeq} \ar[dr]^-{\tilde\rho} & \\
\Map_{B\cC}(E'\cC^{c}, Y') \ar[r]_-{\rho} & Y'_{[c]} \\
\Map_{B\cC}(E'\cC^{c}, Y) \ar[r]_-{\rho} \ar[u]^{\simeq} & Y_{[c]} \ar[u]_{\simeq},
}
\]
and they are both weak homotopy equivalences since $F_{B\cC}(E'\cC^{c}, -)$ preserves weak equivalences between $qf$-fibrant parametrized spaces.  The downward pointing arrow is induced by the $qf$-cofibrant approximation map $E'\cC^{c} \arr E\cC^{c}$, and it is a weak homotopy equivalence because $Y'$, being an $h$-fibration over $B\cC$, is also a $q$-fibration, and $E\cC^{c}$ and $E'\cC^{c}$ are both $q$-cofibrant. The diagonal restriction map $\tilde\rho$ is a weak homotopy equivalence by part (i), and this concludes the proof.
\end{proof}

\section{The aggregate model structure}\label{sec:aggregate}

In this section we introduce the aggregate model structure on the category of $\cC$-spectra and prove that it is Quillen equivalent to May-Sigurdsson's stable model structure on parametrized spectra over $B\cC$. In order to motivate our definition of aggregate equivalences, we begin with a lemma. Let $X$ be any $\cC$-spectrum, not necessarily cofibrant.
\begin{lemma}\label{lem:is_actually_hocolim}
The parametrized spectrum $E\cC_+ \osma_{\cC} X$ is at level $V$ homeomorphic to the Bousfield-Kan model for the unbased homotopy colimit $\hocolim_{\cC} X(-)_{V}$. The inclusion and projection onto $B\cC$ are identified with the inclusion and projection onto the diagram consisting of just the basepoints of the spaces $X(c)_{V}$.
\end{lemma}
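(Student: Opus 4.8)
The plan is to unwind the coend $E\cC_+ \osma_\cC X$ at level $V$ and recognize it as the realization of the standard simplicial object computing the Bousfield--Kan homotopy colimit. First I would recall that, for an ex-space $A$ over $B$ and a based space $K$, the half-parametrized smash $A \osma K$ is computed fiberwise: on total spaces it is the pushout of $B \leftarrow B \times \ast \to A \times K$ (crushing the section against $K$), and more simply, when we apply this levelwise to $\Sigma^\infty_{B\cC} E\cC_+$ and $X(-)_V$, the coend $E\cC_+ \osma_\cC X$ at level $V$ is the coend of the \emph{unbased} spaces $E\cC^c \times X(c)_V$ over $c$, with a disjoint section thrown in and the section collapsed appropriately. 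So it suffices to identify the unbased coend $E\cC(-) \times_\cC X(-)_V := \int^{c} E\cC^c \times X(c)_V$ with the Bousfield--Kan $\hocolim_\cC X(-)_V$, compatibly with projection and section.

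Next I would write out both sides explicitly. By definition $E\cC^c = B(\ast,\cC,\cC(c,-)) = |[q] \mapsto \coprod_{c_0,\dots,c_q} \cC(c_{q-1},c_q)\times\dots\times\cC(c_0,c_1)\times\cC(c,c_0)|$, and the coend over $c$ with $X(c)_V$ identifies, degreewise, the two copies of the variable object $c$: in simplicial degree $q$ we get
\[
\coprod_{c_0,\dots,c_q} \cC(c_{q-1},c_q)\times\dots\times\cC(c_0,c_1) \times \Bigl( \int^{c} \cC(c,c_0) \times X(c)_V \Bigr) \;\cong\; \coprod_{c_0,\dots,c_q} \cC(c_{q-1},c_q)\times\dots\times\cC(c_0,c_1)\times X(c_0)_V,
\]
using the coYoneda identity $\int^c \cC(c,c_0)\times X(c)_V \cong X(c_0)_V$. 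This is precisely the $q$-simplices of the Bousfield--Kan simplicial replacement of the diagram $X(-)_V$, and one checks the face and degeneracy maps match: the outer faces use the covariant functoriality of $X$ and the collapse $\cC(c_{q-1},c_q)\to\ast$, the inner faces use composition in $\cC$, and degeneracies insert identities. Since geometric realization commutes with coends (both are colimits) and with the coproduct decomposition, taking $|-|$ yields the claimed homeomorphism $E\cC_+ \osma_\cC X$ at level $V$ $\cong$ $\hocolim_\cC X(-)_V$ with a disjoint section.

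For the final sentence, I would track the section and projection through this identification. The projection $\pi\colon E\cC^c \to B\cC$ is induced by the unique map $\cC(c,-)\to\ast$, so under the degreewise identification above it becomes the map that forgets the $X(c_0)_V$-coordinate, i.e.\ the projection of the simplicial replacement of $X(-)_V$ onto the simplicial replacement of the constant diagram at a point, whose realization is $B\cC = |B_\bullet\cC|$; this is exactly ``the projection onto the diagram consisting of just the basepoints.'' Dually, the section of $E\cC_+ \osma_\cC X$ over $B\cC$ is the disjoint section adjoined by $(-)_+$, which corresponds to the inclusion $X(c)_V \supset \{\text{basepoint}\}$ diagramwise, so it is identified with $\hocolim_\cC (\ast) \to \hocolim_\cC X(-)_V$ induced by the basepoints. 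I expect the main obstacle to be purely bookkeeping: carefully matching the simplicial structure maps (and checking the disjoint section interacts correctly with the collapse maps in the bar construction) so that the degreewise homeomorphisms assemble into a homeomorphism of simplicial spaces, and hence of realizations, rather than merely a levelwise bijection; the underlying mathematical content is just the coYoneda lemma together with the fact that realization is a colimit.
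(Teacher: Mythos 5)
Your proposal follows essentially the same route as the paper's proof: reduce the half-parametrized smash with a disjointly-sectioned space to an unbased product glued to the base, commute the coend past this identification, recognize the unbased coend $E\cC \times_{\cC} X(-)_V$ as the two-sided bar construction (your degreewise co-Yoneda argument is just a more explicit version of the paper's final simplification), and then track the section and projection. One local correction is needed, and it matters because the bookkeeping of the section is exactly the content of the lemma's second sentence: your displayed formula for $A \osma K$ is not the fiberwise smash. For a general ex-space $A$ with section $s$, the total space of $A \osma K$ is the pushout of $B \leftarrow (A \times \{\ast\}) \cup_{s(B) \times \{\ast\}} (s(B) \times K) \rightarrow A \times K$; in the only case you use, $A = E_+$ with disjoint section, this becomes $(E \times K) \cup_{E \times \{\ast\}} B$, which is the identity the paper records. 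The pushout of $B \leftarrow B \times \ast \rightarrow A \times K$ that you wrote collapses nothing (the left leg is an isomorphism), and if taken literally it would leave the section of the coend as a disjoint copy of $B\cC$ rather than identifying it with the sub-homotopy-colimit of basepoints. Your subsequent assertions — that the section corresponds to the basepoint inclusions $\ast \subset X(c)_V$ and the projection to forgetting the $X(c_0)_V$-coordinate, i.e.\ to $\hocolim_{\cC}\ast \cong B\cC$ — are the correct ones and are precisely what the corrected formula produces, so with that one formula fixed the argument goes through as in the paper.
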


\begin{proof}
This is expected in the case of a group $G$, since $EG_+ \osma_G X$ is on each fiber over $BG$ homeomorphic to $G_+ \sma_G X_b$, which is homeomorphic to $X_b$. For the general case, we first check using the defining adjunction for the half-parametrized smash product of spaces
\[
F_B( M \osma X, N) \cong F(X, F_B(M, N)).
\]
that for a space $E$ over $B$ and a based space $X$, we have a canonical homeomorphism
\[ E_+ \osma X \cong (E \times X) \cup_{(E \times *)} (B \times *). \]
Applying this to the coequalizer system for $E\cC_+ \osma_{\cC} X_V$
gives
\[
\xymatrix{
\underset{c, d \in \ob \cC}{\bigvee} (E\cC(d) \times \cC(c, d) \times X(c)_V) \cup_{E\cC(d) \times \cC(c, d) \times *} (B\cC \times *) \ar@<.5ex>[d]  \ar@<-.5ex>[d] \\ \underset{d \in \ob \cC}{\bigvee} (E\cC(d) \times X(d)_V) \cup_{E\cC(d) \times *} (B\cC \times *).
}
\]
As the big wedge signifies coproduct over $B\cC$, this rearranges to
\[
\xymatrix{
\left(\underset{c, d \in \ob \cC}{\coprod} E\cC(d) \times \cC(c, d) \times X(c)_V)\right) \cup_{\coprod_{c,d} E\cC(d) \times \cC(c,d)} B\cC \ar@<.5ex>[d]  \ar@<-.5ex>[d] \\ \left(\underset{d \in \ob \cC}{\coprod} E\cC(d) \times X(d)_V \right) \cup_{\coprod_d E\cC(d)} B\cC
}
\]
Since coequalizers commute with pushouts, this rearranges to the pushout
\[ E\cC \times_\cC X(-)_V \cup_{E\cC \times_\cC {*}} B\cC \]
which simplifies to the coend of unbased spaces
\[ E\cC \times_\cC X(-)_V \cong \Bigl \vert [q] \longmapsto \coprod_{c_0, \dotsc, c_q \in \ob \cC} \cC(c_{q - 1}, c_q) \times \dotsm \times \cC(c_0, c_1) \times X(c_0)_V \Bigr\vert. \]
This is the Bousfield-Kan model for the unbased homotopy colimit of $X(-)_V$.
\end{proof}

We define a new notion of equivalence of $\cC$-spectra by ``aggregating'' each diagram together into a single parametrized spectrum, then measuring its homotopy type.

\begin{definition}  
A map $f \colon X \arr Y$ of $\cC$-spectra is an \emph{aggregate equivalence} if the induced map 
\[
E\cC_{+} \osma_{\cC} X \arr E\cC_{+} \osma_{\cC} Y
\]
is a stable equivalence of parametrized spectra over $B\cC$.
\end{definition}

In other words, the aggregate equivalences are detected by taking (unbased) homotopy colimits over $\cC$. We remark that $X$ and $Y$ are not assumed to be cofibrant diagrams in this definition. This will not get us into trouble because this particular model of the homotopy colimit is known to preserve all equivalences of diagrams of spaces. In particular, every level equivalence of $\cC$-spectra is an aggregate equivalence.

In order to verify that object-wise stable equivalences of $\cC$-spectra are also aggregate equivalences, we may without loss of generality make $X$ and $Y$ cofibrant diagrams of spectra. The following lemma, combined with Lemma \ref{lem:pushout_product} finishes the verification.
\begin{lemma}\label{lemm:cofibrant_detection}
If $X$ is a $q$-cofibrant $\cC$-spectrum, then the map induced by the cofibrant approximation of $E\cC$
\[
\alpha \osma \id_{X} \colon E' \cC_+ \osma_{\cC} X \arr E\cC_{+} \osma_{\cC} X
\]
is a level-wise equivalence of parametrized spectra over $B\cC$.  The same statement holds when $X$ is a $q$-cofibrant based $\cC$-space.
\end{lemma}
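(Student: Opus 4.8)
The plan is to reduce the statement to a known fact about homotopy colimits of diagrams of spaces, applied levelwise. Recall from Lemma \ref{lem:is_actually_hocolim} that $E\cC_+ \osma_\cC X$ is, at level $V$, the Bousfield--Kan model for $\hocolim_\cC X(-)_V$ (with the section and projection coming from the basepoints). The map $\alpha\colon E'\cC \arr E\cC$ is a $qf$-cofibrant approximation, obtained by taking a $qf$-cofibrant approximation of $E\cC$ as a $\cC^{\op}$-diagram in $\Top/B\cC$ and then adding a disjoint section and taking fiberwise suspension spectra. So the map in question, at level $V$, is a map of ex-spaces over $B\cC$ covering a map of unbased spaces over $B\cC$, obtained by applying $(-)\times_\cC X(c)_V$ (space-level coend, then the gluing construction of Lemma \ref{lem:is_actually_hocolim}) to the diagram-level weak equivalence $E'\cC(-) \arr E\cC(-)$ of $\cC^{\op}$-diagrams in $\Top/B\cC$.

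First I would observe that the underlying map of total spaces is $E'\cC \times_\cC X(-)_V \arr E\cC \times_\cC X(-)_V$, which is (up to the homeomorphisms of Lemma \ref{lem:is_actually_hocolim}) the map of Bousfield--Kan homotopy colimits induced by a levelwise weak equivalence of $\cC$-diagrams of unbased spaces $E'\cC(c) \arr E\cC(c)$. The Bousfield--Kan model for $\hocolim_\cC$ preserves all objectwise weak equivalences between diagrams of spaces when the diagram being resolved against is $q$-cofibrant — here the relevant $\cC$-diagram is $X(-)_V$ (which is $q$-cofibrant since $X$ is $q$-cofibrant as a $\cC$-spectrum, so each $X(-)_V$ is $q$-cofibrant as a $\cC$-space) tensored against the free resolution built from the bar construction, and in any case the two-sided bar construction is homotopy invariant in each variable. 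Thus the map is a weak homotopy equivalence on total spaces. Since it is visibly a fiberwise map over $B\cC$ and an iso on the sections (both sections are the inclusion of $B\cC$ coming from the basepoints of the $X(c)_V$), it is a level equivalence of parametrized spectra once we know it is a weak equivalence of ex-spaces at each level, i.e.\ a weak homotopy equivalence of total spaces over $B\cC$ — which is exactly what we have just shown. The based $\cC$-space case is the identical argument without passing through spectrum levels.

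The one point requiring care is that $X$ (hence $X(-)_V$) is only assumed $q$-cofibrant, while the objectwise-equivalence $E'\cC(c) \arr E\cC(c)$ involves the $qf$-cofibrant approximation, which lives in the $qf$-model structure on $\cC^{\op}\Top/B\cC$; I want the induced map on coends $(-)\times_\cC X(-)_V$ to be a weak equivalence. The clean way to get this is to note that $\times_\cC X(-)_V$, for $X(-)_V$ a $q$-cofibrant $\cC$-space, is computed by the two-sided bar construction $B(E\text{-side}, \cC, X(-)_V)$ up to homeomorphism (as in the proof of Lemma \ref{lem:is_actually_hocolim}), and the two-sided bar construction $B(-, \cC, -)$ takes objectwise weak equivalences in either variable, between diagrams that are levelwise well-based and have the cofibrancy needed for the simplicial realization to be homotopy-meaningful, to weak equivalences. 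Both $E\cC(-)$ and $E'\cC(-)$ are levelwise retracts of CW complexes over $B\cC$ (by our standing hypothesis on $\cC$ and the construction of the $qf$-approximation), and $X(c)_V$ is well-based by $q$-cofibrancy, so Reedy cofibrancy of the relevant simplicial spaces holds and the realization lemma applies.

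The main obstacle I anticipate is precisely this bookkeeping: making sure that the homotopy-colimit (equivalently bar-construction) invariance statement applies with the mild hypotheses actually available — $q$-cofibrant, not $qf$-cofibrant, diagrams $X$, and only ``retract of a CW complex'' control on the morphism spaces of $\cC$ — so that one does not need to cofibrantly replace $X$ again. Everything else is a formal consequence of Lemma \ref{lem:is_actually_hocolim} identifying these parametrized spectra levelwise with homotopy colimits, together with the observation that the relevant maps are fiberwise isomorphisms on sections. I would also explicitly invoke that a map of parametrized spectra which is a weak homotopy equivalence of total spaces at each level $V$, and compatible with sections, is by definition a level equivalence, so no further argument about stable equivalences is needed here.
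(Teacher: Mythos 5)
There is a genuine gap at the central step. Your reduction to Lemma \ref{lem:is_actually_hocolim} only identifies the \emph{codomain} $E\cC_+ \osma_{\cC} X$ of the map with a two-sided bar construction $B(\ast,\cC,X(-)_V)$; the domain $E'\cC_+ \osma_{\cC} X$ is a strict coend against an abstractly chosen $qf$-cofibrant replacement $E'\cC$, which is a retract of a $qf$-cell complex and is \emph{not} a bar construction in any variable. Consequently, neither ``the Bousfield--Kan model preserves objectwise equivalences'' (which concerns maps in the $X$-variable, while here $X$ is fixed and the map is in the $E$-variable) nor ``the two-sided bar construction is homotopy invariant in each variable'' applies to the map $\alpha \osma \id_X$: only one of its two sides is of bar form. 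Your Reedy-cofibrancy and well-basedness bookkeeping therefore addresses the wrong issue, and the statement you actually need --- that the coend $- \osma_{\cC} X(-)_V$ sends the weak equivalence $\alpha \colon E'\cC \arr E\cC$ to a weak equivalence --- is asserted but never justified.

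The missing ingredient, which is how the paper argues, is that the coend
$- \osma_{\cC} - \colon \cC^{\op}\Top_{B\cC} \times \cC\Top_* \arr \Top_{B\cC}$
is a Quillen bifunctor for the projective $q$-model structures on the two diagram categories and the $q$-model structure on $\Top_{B\cC}$ (one checks this on generators exactly as in the proof of Lemma \ref{lem:pushout_product}). Then one uses that $E\cC$ is projectively $q$-cofibrant (this is precisely the content of the lemma preceding the definition of $E'\cC$, which your argument never invokes), that $E'\cC$ is $qf$-cofibrant and hence also $q$-cofibrant, and that $X$ is $q$-cofibrant; Ken Brown's lemma for the Quillen bifunctor then shows that the weak equivalence $\alpha$ between $q$-cofibrant diagrams is preserved by $- \osma_{\cC} X$, levelwise. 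Note also that your worry about the $q$/$qf$ mismatch dissolves once everything is placed in the projective $q$-model structures: the only role of $qf$-cofibrancy of $E'\cC$ here is that it implies $q$-cofibrancy. Your identification of the sections and the remark that a levelwise weak equivalence of total spaces over $B\cC$ is a level equivalence are fine, but they are the easy part; without the Quillen bifunctor (or an equivalent homotopical control on the strict coend with $E'\cC$), the proof does not go through.
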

\begin{proof}
The first claim is an immediate consequence of the second. For the second, we argue just as in the proof of Lemma \ref{lem:pushout_product} that the coend
\[
- \osma_{\cC} - \colon \cC^{\op}\Top_{B\cC} \times \cC\Top_* \arr \Top_{B\cC}
\]
is a Quillen bifunctor for the projective $q$-model structures on $\cC^{\op}\Top_{B\cC}$ and $\cC\Top_*$, and the $q$-model structure on $\Top_{B\cC}$.
Since $E\cC$ and $E'\cC$ are both projective $q$-cofibrant, and $X$ is assumed to be projective $q$-cofibrant, the weak homotopy equivalence $\alpha$ is preserved by by taking the coend with $X$.
\end{proof}

Now that we know that the aggregate equivalences contain the object-wise stable equivalences, we may construct the left Bousfield localization of the projective $q$-model structure on $\cC$-spectra along the aggregate equivalences.  Since the aggregate equivalences are detected by taking homotopy colimits, the resulting model structure is a stable analog of the model structure on certain categories of diagram spaces \citelist{\cite{diagram_spaces} \cite{SaSch}}.  When the category $\cC$ is discrete, the aggregate model structure was considered by Cisinski \cite{cisinski}*{\S2}.  The existence of the Bousfield localization exists by Hirschhorn's general criteria \cite{Hirschhorn}*{4.1.1}, and can be constructed by hand using the method axiomatized in \cite{MS}*{\S4.5}.  

\begin{theorem}\label{thm:agg_model_str}
There is a compactly generated model category structure on $\cC\Sp$, which we call the \emph{aggregate model structure}, whose weak equivalences are the aggregate equivalences and whose cofibrations coincide with the $q$-cofibrations in the projective model structure.  The fibrations are the object-wise $q$-fibrations $X \arr Y$ for which every diagram
\[ \xymatrix{
X(c) \ar[r] \ar[d] & X(d) \ar[d] \\
Y(c) \ar[r] & Y(d) } \]
induced by a morphism $c \arr d$ in $\cC$ has the property that the map from the initial vertex to the pullback is a weak homotopy equivalence. In particular, the fibrant diagrams are \emph{almost constant}, meaning that every map $X(c) \arr X(d)$ induced by a morphism in $\cC$ is a stable equivalence of spectra.
\end{theorem}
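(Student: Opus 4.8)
The plan is to invoke Hirschhorn's existence theorem for left Bousfield localizations, as suggested in the paragraph preceding the statement. The projective $q$-model structure on $\cC\Sp$ is cofibrantly generated, in fact cellular (or combinatorial after a passage along a suitable adjunction), and it is left proper because every object in the stable model structure on spectra is fibrant-cofibrant up to the usual smallness caveats and pushouts along cofibrations preserve $\pi_*$-isomorphisms. Once we know the aggregate equivalences contain the object-wise stable equivalences (which follows from Lemma~\ref{lem:is_actually_hocolim}, Lemma~\ref{lemm:cofibrant_detection}, and Lemma~\ref{lem:pushout_product} as explained just above) and that they satisfy the two-out-of-three property and are closed under retracts, Hirschhorn's criterion \cite{Hirschhorn}*{4.1.1} produces the localized model structure with the same cofibrations as the projective structure and with the aggregate equivalences as weak equivalences. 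The assertion that the structure is compactly generated, rather than merely cofibrantly generated, is handled by the ``by hand'' construction using the patching method of \cite{MS}*{\S4.5}: one localizes the generating acyclic cofibrations by adjoining, for each morphism $c \arr d$ of $\cC$, the appropriate maps measuring the discrepancy between $X(c)$ and the pullback.

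The heart of the proof is then the explicit identification of the fibrant objects and the fibrations. First I would characterize the local (fibrant) objects: a projective-fibrant $\cC$-spectrum $Z$ is aggregate-local iff $E\cC_+ \osma_\cC (-)$ sends every aggregate equivalence into $Z$ to an isomorphism on $[-,Z]$; unwinding this against the generating localizing maps shows $Z$ is local iff each structure map $Z(c) \arr Z(d)$ is a stable equivalence, i.e. $Z$ is almost constant. The key input here is Lemma~\ref{lem:is_actually_hocolim}, which tells us that $E\cC_+ \osma_\cC X$ computes the genuine (unbased) homotopy colimit, so that aggregate equivalence is exactly ``$\hocolim_\cC$-equivalence'', and a standard cofinality/contractibility argument (the classifying space of $\cC$ viewed through the lens of the over-categories, or more directly the fact that constant diagrams are computed correctly by $\hocolim$) shows almost-constant diagrams are local. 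Conversely one tests locality against the maps $\cC(d,-)_+ \sma F_V S^{n-1}_+ \cup \dots \to \cC(c,-)_+ \sma F_V D^n_+$ built from a morphism $c \to d$; by Yoneda these detect precisely whether $Z(c) \to Z(d)$ is an equivalence.

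Next I would identify the fibrations between arbitrary objects. By the general theory of Bousfield localization, a map is an aggregate fibration iff it is a projective $q$-fibration and its target, together with the homotopy fiber over each point, is local — equivalently, a projective $q$-fibration $f\colon X \to Y$ is an aggregate fibration iff the square
\[ \xymatrix{ X(c) \ar[r] \ar[d] & X(d) \ar[d] \\ Y(c) \ar[r] & Y(d) } \]
is a homotopy pullback for every $c \arr d$. One direction is the observation that homotopy-pullback squares are exactly the ones not destroyed by localization; the other uses that the fibers of such an $f$ are almost constant, hence local, and that a projective fibration with local fibers over a local-after-nothing base is a local fibration — here one must be slightly careful since $Y$ itself need not be local, so the correct statement (which I would phrase via the factorization of $f$ through the localization of $Y$) is precisely the homotopy-pullback condition displayed in the theorem.

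The main obstacle I anticipate is the compact generation claim: Hirschhorn's theorem delivers cofibrant generation by a possibly large (though still set-sized) collection of generating acyclic cofibrations with no control over their domains' smallness relative to the relevant cell complexes, whereas ``compactly generated'' in the sense of \cite{MS}*{\S4.5} demands domains that are compact (finite-cell) and a transfinite small-object argument that terminates at $\omega$. Establishing this requires explicitly writing down a generating set: one takes the generating acyclic cofibrations of the projective structure together with, for each morphism $\gamma\colon c \to d$ in (a suitable dense subcategory of) $\cC$ and each generating cell $F_V(S^{n-1}_+ \to D^n_+)$, the pushout-product-type map that forces the relevant square to become a homotopy pullback — a mapping-cylinder construction on the map $\cC(d,-)_+ \to \cC(c,-)_+$ smashed with the generating cell. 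Verifying that relative cell complexes built from this enlarged set are aggregate equivalences is exactly where Lemma~\ref{lem:pushout_product} and the gluing-lemma-friendly behavior of $E\cC_+ \osma_\cC (-)$ (again via Lemma~\ref{lem:is_actually_hocolim}, since the Bousfield–Kan homotopy colimit preserves all the relevant cofiber sequences and filtered colimits) do the work; compactness of the domains is inherited from compactness of the generating cells in $\Sp$ and finiteness of the bar-construction skeleta, using the assumption that each $\cC(c,d)$ is a retract of a CW complex.
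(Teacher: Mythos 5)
Your overall strategy --- Bousfield-localize the projective structure, citing Hirschhorn for existence and the compactly generated machinery of \cite{MS}*{\S 4.5} for the by-hand version, then identify the fibrant objects and fibrations --- is exactly the route the paper indicates (the paper itself offers only this citation-level sketch, not a detailed proof). But there is a genuine gap at the central step. Hirschhorn's theorem \cite{Hirschhorn}*{4.1.1} localizes at a \emph{set} $S$ of maps and produces a model structure whose weak equivalences are the $S$-local equivalences; it does not accept the class of aggregate equivalences as input, and knowing that this class contains the object-wise equivalences, satisfies two-out-of-three, and is closed under retracts does not make the localized weak equivalences equal to it. Your attempted repair --- testing against maps of corepresentables attached to morphisms $c \arr d$ --- only identifies the $S$-\emph{local objects} as the almost constant diagrams. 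What is missing is a proof that the $S$-local \emph{equivalences} coincide with the aggregate equivalences, and here your assertion that ``aggregate equivalence is exactly $\hocolim_{\cC}$-equivalence'' conflates two different notions: an aggregate equivalence is a stable equivalence of \emph{parametrized} spectra over $B\cC$, i.e.\ a fiberwise condition after levelwise fibrant replacement, which is strictly finer than inducing an equivalence of total homotopy colimits. Bridging ``equivalence on derived mapping spectra into almost constant fibrant diagrams'' with this fiberwise condition requires real input --- for instance that every almost constant fibrant diagram is object-wise equivalent to $\Phi M$ for some fibrant $M$ over $B\cC$ and that $\Map(\beta(-),M)$ detects stable equivalences over $B\cC$, which rests on Lemma \ref{lemm:thmB} and Lemma \ref{lemm:rho_equiv_space_level} (essentially the content the paper defers to the proof of Theorem \ref{thm:agg_quillen_equiv}) --- or, in the by-hand route, the analogous verification that maps with the right lifting property against your enlarged set of generating acyclic cofibrations are exactly the maps with the right lifting property against all cofibrations that are aggregate equivalences. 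Neither is carried out in your proposal.

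Two secondary points. First, ``the general theory of Bousfield localization'' does not characterize fibrations between non-fibrant objects; the stated characterization (object-wise $q$-fibration such that every square over a morphism of $\cC$ is a homotopy pullback) requires right properness of the projective structure together with a Theorem-B-style identification of the local replacement fiberwise over $B\cC$, again via Lemma \ref{lemm:thmB}, so it must be argued rather than quoted. Second, your justification of left properness (``every object in the stable model structure on spectra is fibrant-cofibrant'') is not correct as stated; left properness holds because cofibrations of orthogonal spectra are levelwise $h$-cofibrations and cobase change along these preserves $\pi_*$-isomorphisms, which is the fact you should invoke.
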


By Lemma \ref{lemm:cofibrant_detection}, the Borel construction $\beta \colon \cC\Sp \arr \Sp_{B\cC}$ takes aggregate equivalences between cofibrant diagrams to stable equivalences of parametrized spectra.  Thus the adjunction $(\beta, \Phi)$, which we already know to be Quillen for the projective model structure, is also Quillen for the aggregate model structure.  The main theorem of this section is a strengthening of this observation:

\begin{theorem}\label{thm:agg_quillen_equiv}
The adjunction $(\beta, \Phi)$ is a Quillen equivalence between the aggregate model structure on $\cC$-spectra and the stable model structure on parametrized spectra over $B\cC$.
\end{theorem}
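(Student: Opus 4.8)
The plan is to show that $(\beta, \Phi)$ is a Quillen equivalence by verifying the standard criterion: for every $q$-cofibrant $\cC$-spectrum $X$ and every $qf$-fibrant parametrized spectrum $M$ over $B\cC$, a map $\beta(X) \arr M$ is a stable equivalence if and only if its adjoint $X \arr \Phi(M)$ is an aggregate equivalence. Since both $\beta$ and $\Phi$ have already been shown to be Quillen for the aggregate structure, it suffices to check that the derived unit and derived counit are weak equivalences, and in fact it is cleanest to do this by proving two things: (a) for $qf$-fibrant $M$, the counit $\beta(\Phi M) \arr M$ is a stable equivalence; and (b) for $q$-cofibrant $X$, the unit $X \arr \Phi(\beta X)^{\mathrm{fib}}$ is an aggregate equivalence, where the target is a fibrant replacement in the aggregate model structure. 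Because aggregate equivalences are detected by applying $E\cC_+ \osma_\cC (-)$ and landing in $\Sp_{B\cC}$, statement (b) can be reduced to a stable equivalence question about parametrized spectra as well.

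\textbf{Step 1: Reduce the counit to a fiberwise statement.} Given $qf$-fibrant $M$, I would first observe that by Lemma~\ref{lemm:rho_equiv} the $\cC$-spectrum $\Phi M$ has value $F_{B\cC}(E'\cC^c_+, M)$ at $c$, which is stably equivalent via $\rho$ to the fiber $M_{[c]}$. Thus $\Phi M$ is an almost-constant diagram whose value at each object $c$ models the fiber of $M$ over the corresponding point of $B\cC$; in particular the derived $\Phi M$ is precisely the ``diagram of fibers.'' Now $\beta(\Phi M) = E'\cC_+ \osma_\cC \Phi M$, and by Lemma~\ref{lem:is_actually_hocolim} together with Lemma~\ref{lemm:cofibrant_detection} this is levelwise the homotopy colimit $\hocolim_\cC (\Phi M)(-)_V$. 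So the counit becomes the canonical map $\hocolim_\cC M_{[-]} \arr M$. When $\cC$ is connected (e.g. a group) the classical statement is that this assembly map is a stable equivalence because $E\cC \arr B\cC$ is, fiberwise, the universal example and $M$ was assumed $qf$-fibrant so its fibers behave correctly; in general one argues over the skeleta of the bar construction using that $qf$-fibrations are quasifibrations (the crucial fact cited from \cite{MS}*{6.5.1}), so the five-lemma and the gluing lemma for $h$-cofibrations apply inductively to the simplicial filtration of $E\cC$.

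\textbf{Step 2: Handle the unit.} For $q$-cofibrant $X$, I want $X \arr \Phi(\beta X)$ to be an aggregate equivalence, i.e. $E\cC_+ \osma_\cC X \arr E\cC_+ \osma_\cC \Phi(\beta X)$ is a stable equivalence in $\Sp_{B\cC}$. Using Lemma~\ref{lemm:cofibrant_detection} to replace $E\cC_+$ by $E'\cC_+$ where needed, the left-hand side is $\beta X$. For the right-hand side, I would first replace $\beta X$ by a $qf$-fibrant model $R\beta X$ (a level equivalence, hence a fortiori doesn't change the stable homotopy type), apply Step 1's identification of $\Phi(R\beta X)$ with the diagram of fibers of $R\beta X$, and then invoke Lemma~\ref{lem:is_actually_hocolim} again to identify $E\cC_+ \osma_\cC \Phi(R\beta X)$ levelwise with $\hocolim_\cC (R\beta X)_{[-]}$. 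So the unit, after aggregating, is the assembly map $\beta X \arr \hocolim_\cC (\text{fibers of }\beta X)$ — the same map as in Step 1 but for the particular $M = R\beta X$ — which is a stable equivalence by Step 1. (One must check the two maps agree up to the identifications, which is a diagram chase with the adjunction counit.)

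\textbf{Main obstacle.} The hard part is Step 1: proving the assembly map $\hocolim_\cC M_{[-]} \arr M$ is a stable equivalence for arbitrary $qf$-fibrant $M$ and arbitrary small topological $\cC$ (with $\cC(c,d)$ retracts of CW complexes). The subtlety is exactly the one flagged in Section~\ref{sec:cells}: the projection $E\cC^c \arr B\cC$ need not be a fibration or even a quasifibration when $\cC$ is not a group, so one cannot naively compute the fiber of $\beta X$ over $[c]$ as $(\Phi\beta X)(c)$. The resolution is to work with $qf$-fibrant/ex-quasifibrant models throughout (Lemma~\ref{lem:different_fib_replace}) so that fibers compute the right thing, and to filter $E\cC$ by the bar-construction skeleta, using that each attaching map is an $h$-cofibration and that $qf$-fibrations are quasifibrations so the gluing and five lemmas are available. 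A secondary technical point is verifying the compatibility of the various identifications (the homeomorphism of Lemma~\ref{lem:is_actually_hocolim}, the equivalence $\rho$ of Lemma~\ref{lemm:rho_equiv}, and the adjunction unit/counit) so that the abstract unit/counit maps really are the concrete assembly maps; this is bookkeeping but needs care because $E'\cC$ rather than $E\cC$ appears in the definition of $\beta$ and $\Phi$.
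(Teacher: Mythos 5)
Your overall architecture coincides with the paper's: since $\beta$ detects aggregate equivalences between cofibrant diagrams (Lemma \ref{lemm:cofibrant_detection}), everything reduces to showing that the derived counit $E'\cC_+\osma_\cC \Phi(M)^{qf\mathrm{-cof}} \arr M$ is a stable (in fact level) equivalence for $qf$-fibrant $M$, with $\Phi M$ identified objectwise with the fibers of $M$ via $\rho$ (Lemma \ref{lemm:rho_equiv}) and $\beta$ identified with the Bousfield--Kan homotopy colimit (Lemma \ref{lem:is_actually_hocolim}); your Step 2 is the standard ``detection plus counit'' reduction in slightly different words (note the aggregated unit is not literally the assembly map, but composes with it to $\beta X \arr R\beta X$, so two-out-of-three does the job). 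All of that matches the paper.

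The genuine gap is in Step 1, which you correctly flag as the main obstacle but then dispose of with a sketch that does not work as stated. To check the counit fiberwise you need to know that the source $E\cC_+\osma_\cC F_{B\cC}(E\cC_+,M)$ --- a bar-construction homotopy colimit, which is certainly not $qf$-fibrant --- is a quasifibration over $B\cC$ whose fiber over $[c]$ is $F_{B\cC}(E\cC^c_+,M)$. The cited fact that $qf$-fibrations are quasifibrations applies only to the target $M$ and says nothing about this source, and a skeletal induction over the bar filtration using the gluing and five lemmas does not establish it: quasifibrations and their fibers are not preserved by the pushouts occurring in that filtration (the failure of fiberwise gluing arguments is exactly the phenomenon recalled in \S\ref{sec:cells}), so a genuine quasifibration recognition theorem is required. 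The paper supplies precisely this ingredient as Lemma \ref{lemm:thmB}, the topological-category version of Quillen's Theorem B (due to Meyer): if every morphism of $\cC$ acts on a based $\cC$-space $X$ by weak homotopy equivalences, then $E\cC_+\osma_\cC X \arr B\cC$ is a quasifibration with fiber $X(c)$ over $[c]$. The hypothesis holds for $X=F_{B\cC}(E\cC_+,M)$ after replacing $M$ by an $h$-fibration, using Lemma \ref{lemm:rho_equiv_space_level}(i), and the counit restricted to fibers is then identified with $\tilde\rho$, which is an equivalence. Your proposal needs either to invoke such a Theorem B statement or to carry out the Dold--Thom distinguished-subset argument it encapsulates; as written, ``gluing and five lemmas over the skeleta'' does not prove the key quasifibration claim, and this claim is the heart of the theorem.
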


In order to prove the theorem, we will use an elementary fact from category theory: if $(F, G)$ is an adjunction of categories, the left adjoint $F$ detects isomorphisms, and the counit $\epsilon \colon FGx \arr x$ is an isomorphism on every object $x$, then $(F, G)$ is an equivalence of categories.  In our case, we know that the adjunction $(\beta, \Phi)$ is Quillen, so we may apply the fact to the induced adjunction $(\bL \beta, \bR \Phi)$ of homotopy categories.  Note that the left adjoint $\beta$ detects weak equivalences between $q$-cofibrant $\cC$-spectra by Lemma \ref{lemm:cofibrant_detection}, and so the left derived functor $\bL \beta$ detects isomorphisms.  This means that the proof of Theorem \ref{thm:agg_quillen_equiv} is reduced to the proof of:

\begin{proposition}\label{prop:two_claims}
For every $qf$-fibrant parametrized spectrum $M$ over $B\cC$, the derived counit
\[
E'\cC_+ \osma_{\cC} F_{B\cC}(E'\cC_+, M)^{qf\mathrm{-cof}} \arr E'\cC_+ \osma_{\cC} F_{B\cC}(E'\cC_+, M) \oarr{\epsilon} M
\]
obtained by precomposing $\epsilon$ with a cofibrant approximation of the $\cC$-spectrum $F_{B\cC}(E'\cC_+, M)$ is a level equivalence.
\end{proposition}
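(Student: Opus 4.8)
The plan is to verify this level equivalence one indexing space $V$ at a time, reducing immediately to a statement about parametrized spaces. Fix $V$, and observe that the spectrum-level coend $E'\cC_+ \osma_\cC (-)$ is computed levelwise by the space-level half-parametrized coend applied to $M(V)$. So I would set $Y = M(V)$, a $qf$-fibrant parametrized space over $B\cC$ (since $M$ is $qf$-fibrant, each level is $qf$-fibrant, hence a quasifibration), and reduce the claim to showing that
\[
E'\cC \times_\cC \Map_{B\cC}(E'\cC^{(-)}, Y)^{\mathrm{cof}} \;\cup\; B\cC \;\arr\; Y
\]
— the space-level counit precomposed with a projective cofibrant approximation of the $\cC^{\op}$-diagram $c \mapsto \Map_{B\cC}(E'\cC^c, Y)$ — is a weak homotopy equivalence of spaces (using the identification of $E'\cC_+ \osma X$ with $(E'\cC \times_\cC X) \cup B\cC$ from the proof of Lemma~\ref{lem:is_actually_hocolim}).

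The key computational input is Lemma~\ref{lemm:rho_equiv_space_level}(ii): for each object $c$, the restriction-to-basepoint map $\rho_c \colon \Map_{B\cC}(E'\cC^c, Y) \arr Y_{[c]}$ is a weak homotopy equivalence. So I would compare the counit to the map $E'\cC \times_\cC (Y_{[-]})^{\mathrm{cof}} \cup B\cC \arr Y$ assembled from the fibers of $Y$. Since the homotopy colimit (Bousfield–Kan) construction $E\cC \times_\cC (-)$ preserves objectwise weak equivalences of unbased $\cC^{\op}$-diagrams, and since $E'\cC$ is projective $q$-cofibrant so that $E'\cC \times_\cC (-)$ also has this property on cofibrant-enough diagrams, the levelwise equivalence $\rho$ lets me replace the mapping-space diagram by the fiber diagram $c \mapsto Y_{[c]}$. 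The remaining claim is then that the natural map
\[
E'\cC \times_\cC Y_{[-]} \;\cup_{E'\cC \times_\cC \ast}\; B\cC \;\arr\; Y
\]
is a weak homotopy equivalence. Since $Y \arr B\cC$ is a $qf$-fibration, hence a quasifibration, its fibers over a fixed path component are all weakly equivalent, and the source is precisely the homotopy-colimit presentation of $Y$ over $B\cC$ assembled from those fibers with the descent data supplied by the bar construction $E'\cC$; this is a parametrized-space version of the statement that the Borel construction on the fiber of a quasifibration over a connected base recovers the total space up to weak equivalence. I would prove it by restricting to each path component $B\cC^{(i)}$ of $B\cC$, where $\cC^{(i)}$ is a connected subcategory, and then using that over a connected base the classifying-space projection $E'\cC^c \arr B\cC^{(i)}$ together with $qf$-fibrancy of $Y$ forces the comparison map to be a fiberwise weak equivalence, which for quasifibrations over the same base implies a weak equivalence on total spaces by the five-lemma applied to the long exact sequences of homotopy groups.

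The main obstacle, I expect, is the last step: controlling the interaction between the bar-construction model $E'\cC$ (which is $qf$-cofibrant but, as the authors warn, genuinely not a $qf$-cell complex and whose projection to $B\cC$ need not be a fibration) and the $qf$-fibrancy of $Y$ well enough to conclude a fiberwise equivalence. The delicate point is that $\Map_{B\cC}(E'\cC^c, Y)$ is well-behaved precisely because $E'\cC^c$ is $q$-cofibrant and $Y$ is $q$-fibrant (being $h$-fibration-approximable), so all the comparisons must be routed through the $q$-model structure, as in the proof of Lemma~\ref{lemm:rho_equiv_space_level}(ii), rather than the $qf$-model structure; keeping careful track of which cofibrancy/fibrancy hypotheses are in force at each stage — especially when passing the objectwise equivalence $\rho$ through the coend $E'\cC \times_\cC (-)$, which requires the target diagram to be suitably cofibrant — is where the real work lies. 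Everything else is a bookkeeping exercise in transporting the levelwise space-level statement back up to spectra.
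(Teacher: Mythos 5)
Your reduction to the space level and your use of Lemma \ref{lemm:rho_equiv_space_level} are in the right spirit, but the central step of your argument has a genuine gap: you propose to replace the $\cC$-diagram $c \mapsto \Map_{B\cC}(E'\cC^{c}, Y)$ by ``the fiber diagram $c \mapsto Y_{[c]}$'' and to form $E'\cC \times_{\cC} Y_{[-]} \cup B\cC$. But the fibers of $Y$ do not form a $\cC$-diagram at all: a morphism $c \arr d$ in $\cC$ induces no map $Y_{[c]} \arr Y_{[d]}$ (a $qf$-fibration, or even a quasifibration, has no strict fiber transport), and correspondingly $\rho$ is only a map of spaces for each fixed $c$, not a natural transformation of diagrams. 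So the object you want to take a coend of does not exist, and the subsequent comparison map is undefined. Moreover, the step you defer to the end --- checking the counit fiberwise over $B\cC$ and concluding by the five lemma --- needs as input that the \emph{source} of the counit, namely the Borel construction on the mapping-space diagram, is itself a quasifibration over $B\cC$ with fiber over $[c]$ the value of the diagram at $c$. That is exactly Lemma \ref{lemm:thmB} (Quillen's Theorem B lemma, in Meyer's topological form), which applies because the diagram is ``almost constant'' (all structure maps are weak equivalences, by the $\rho$/$\tilde\rho$ equivalences); your proposal never supplies this, and without it the fiberwise argument has nothing to anchor it.

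For comparison, the paper's proof avoids both problems by never forming a diagram of fibers: after reducing to a single level and replacing $M$ by an $h$-fibrant space, it routes the comparison through $E\cC$ rather than $E'\cC$ (using Lemma \ref{lem:is_actually_hocolim} and Lemma \ref{lemm:cofibrant_detection} to see that the changes $E'\cC \leftrightarrow E\cC$ on both variables are level equivalences), applies Lemma \ref{lemm:thmB} to conclude that $E\cC_+ \osma_{\cC} F_{B\cC}(E\cC_+, M)$ is a quasifibration over $B\cC$ with fiber $F_{B\cC}(E\cC^{c}_+, M)$, and then identifies the restriction of the counit to that fiber with the map $\tilde\rho$ of Lemma \ref{lemm:rho_equiv_space_level}(i). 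If you want to salvage your outline, the fix is essentially to adopt this structure: keep the mapping-space diagram (do not pass to fibers), invoke Lemma \ref{lemm:thmB} to get the quasifibration statement, and use $\rho$ (or $\tilde\rho$) only to identify the induced map on a single fiber.
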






To prove this we use the following lemma, which forms the heart of Quillen's proof of his ``Theorem B'' \cite{Quillen}*{p. 98}.  For a proof that applies to topological categories, see the very general account due to Meyer \cite{Meyer}*{4.4.1}

\begin{lemma}\label{lemm:thmB}
Let $X$ be a based $\cC$-space for which every map $X(c) \arr X(d)$ of spaces induced by a morphism in $\cC$ is a weak homotopy equivalence.  Then the projection map $\pi_X \colon E\cC_+ \osma_{\cC} X \arr B\cC$ is a quasifibration whose fiber over a point $[c] \in B\cC$ is naturally isomorphic to $X(c)$.
\end{lemma}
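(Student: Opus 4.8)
The plan is to recognize $\pi_X$ as a quasifibration via the Dold--Thom recognition principle, exactly along the lines of Quillen's proof of Theorem~B. First I would use the computation in the proof of Lemma~\ref{lem:is_actually_hocolim} (which applies verbatim with a based $\cC$-space in place of a spectrum level) to identify $E\cC_+\osma_\cC X$ with the Bousfield--Kan model $\bigl|[q]\mapsto\coprod_{c_0\to\cdots\to c_q}X(c_0)\bigr|$ for the unbased homotopy colimit of $X$, compatibly with the projection $\pi_X$ to $B\cC=|N_\bullet\cC|$, which is the realization of the map forgetting the $X(c_0)$-coordinate. The point $[c]$ lies in the image of the $0$-skeleton $\ob\cC$, and the preimage of that $0$-simplex in the realization is precisely the summand $X(c)$ over the identity morphism; this gives the natural identification of the fiber asserted in the lemma and reduces everything to showing that $\pi_X$ is a quasifibration.

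To prove this I would filter $B\cC$ by the skeleta $B_q=F_q|N_\bullet\cC|$ of the nerve and argue by induction on $q$ that $\pi_X$ restricted over $B_q$ is a quasifibration. The standing hypothesis that each inclusion $\ast\arr\cC(c,c)$ is an $h$-cofibration makes $N_\bullet\cC$ a proper simplicial space, so the inclusions $B_{q-1}\hookrightarrow B_q$ are cofibrations and $B_q$ is built from $B_{q-1}$ by attaching cells of the form (the space of composable $q$-tuples, with degeneracies quotiented out) $\times\,\Delta^q$ along $\partial\Delta^q$; once the inductive step is done, the Dold--Thom colimit lemma assembles the $B_q$ into $B\cC$. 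The base case $q=0$ is immediate, since over the discrete space $\ob\cC$ the map $\pi_X$ is the trivial bundle $\coprod_c X(c)\to\coprod_c\ast$. For the inductive step I would cover $B_q$ by the open neighborhood $U$ of $B_{q-1}$ obtained by deleting the barycenters of the attached $q$-cells, together with the open interiors of those cells. Over the cell interiors, and over $U\cap(\text{cell interiors})$, the homotopy colimit restricts to a product bundle with fiber $X(c_0)$, so $\pi_X$ is a quasifibration there; over $U$ one lifts the radial deformation retraction of $U$ onto $B_{q-1}$ to the total space and invokes the Dold--Thom criterion that enlarges the base of a quasifibration along a fiberwise deformation whose induced transport on fibers consists of weak equivalences. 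This last hypothesis is exactly where the assumption on $X$ enters: the transport maps are precisely the structure maps $X(c_0)\to X(c_i)$ of the diagram $X$ attached along the faces $d_i$ of $\Delta^q$ (and composites of such near the lower-dimensional faces), which are weak homotopy equivalences by hypothesis. With $\pi_X$ a quasifibration over $U$, over the cell interiors, and over their overlaps, the Dold--Thom union lemma completes the inductive step.

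The only real obstacle is the bookkeeping in this inductive step: making the open sets and the deformation retraction precise near the corners of the simplices, lifting the deformation continuously over the homotopy colimit, and verifying that the resulting fiber transport really is assembled from the structure maps of $X$. This is the technical core of Quillen's Theorem~B, carried out for topological rather than discrete categories, and it is done in full generality by Meyer~\cite{Meyer}*{4.4.1}. In the write-up I would therefore set up the identification with $\hocolim_\cC X$ and compute the fiber as above, then cite that reference for the quasifibration statement, noting that the hypothesis that the morphism spaces of $\cC$ are retracts of CW complexes is exactly what guarantees that $N_\bullet\cC$---and hence this model of the homotopy colimit---is proper, so that the Dold--Thom machinery applies.
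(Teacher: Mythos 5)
Your proposal is correct and takes essentially the same route as the paper: the paper offers no independent proof of this lemma, simply citing the heart of Quillen's Theorem~B \cite{Quillen} and Meyer's topological-category version \cite{Meyer}*{4.4.1}, which is exactly the reference you ultimately invoke for the quasifibration statement. Your additional Dold--Thom sketch (identification with the Bousfield--Kan homotopy colimit via Lemma~\ref{lem:is_actually_hocolim}, the fiber computation at $[c]$, and the skeletal induction using properness of the nerve) is just an outline of the standard argument underlying that citation, so there is no substantive difference in approach.
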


\begin{proof}[Proof of Proposition \ref{prop:two_claims}]
Since cofibrant approximations of diagrams of spectra are level-wise equivalences, it suffices to consider the case where $M$ is a $qf$-fibrant space. Without loss of generality we may replace it by an $h$-fibrant space. Consider the following commuting diagram, in which our desired composite is the bottom edge.
\[ \xymatrix @!C{
E\cC_+ \osma_\cC F_{B\cC}(E\cC_+, M)^{qf\mathrm{-cof}} \ar[r] & E\cC_+ \osma_\cC F_{B\cC}(E\cC_+, M) \ar[rd]^-\epsilon & \\
E'\cC_+ \osma_\cC F_{B\cC}(E\cC_+, M)^{qf\mathrm{-cof}} \ar[u]_-{\alpha \osma \id} \ar[d]^-{\id \osma F(\alpha,\id)} \ar[r] & E'\cC_+ \osma_\cC F_{B\cC}(E\cC_+, M) \ar[u]_-{\alpha \osma \id} \ar[d]^-{\id \osma F(\alpha,\id)} & M \\
E'\cC_+ \osma_{\cC} F_{B\cC}(E'\cC_+, M)^{qf\mathrm{-cof}} \ar[r] & E'\cC_+ \osma_{\cC} F_{B\cC}(E'\cC_+, M) \ar[ru]^-\epsilon & \\
}\]
It suffices to show that the maps along the top and left edges are equivalences of spaces. For the left edge this follows from the fact that $F(\alpha,\id)$ is an equivalence of spaces and Lemma \ref{lemm:cofibrant_detection}. For the horizontal map on the top edge this follows from Lemma \ref{lem:is_actually_hocolim}. For the remaining map $\epsilon$, by Lemma \ref{lemm:thmB} it suffices to check that the map of fibers $\epsilon_{[c]}$ is an equivalence. We check that its composition with the canonical isomorphism of Lemma \ref{lemm:thmB} gives the equivalence $\tilde\rho$ of Lemma \ref{lemm:rho_equiv_space_level}(i):
\[ \xymatrix{
F_{B\cC}(E\cC^c_+, M) \ar[r]^-\cong & (E\cC_+ \osma_\cC F_{B\cC}(E\cC_+, M))_{[c]} \ar[r]^-{\epsilon_{[c]}} & M_{[c]}
} \]
This finishes the proof.
\end{proof}

\begin{remark}
The analog of Theorem \ref{thm:agg_quillen_equiv} in the case of $\cC$-diagrams of simplicial sets is proved by Heuts and Moerdijk \cite{HM}.  Their route is different than ours; instead of directly proving that a localization of the projective model structure on $\cC$-diagrams is Quillen equivalent to the Kan-Quillen model structure on the comma category $\sSet/N\cC$ of simplicial sets over the nerve, they first construct a model structure on $\sSet/N\cC$ related to the Joyal model structure, prove that it is Quillen equivalent to the projective model structure, then deduce the former statement as a consequence.
\end{remark}

\section{The case where $\cC$ is a groupoid up to homotopy}\label{sec:groupoid}

In this section, we assume that the category $\pi_0 \cC$ obtained from the topological category $\cC$ by applying $\pi_0$ to each mapping space is a groupoid.  In other words, we assume that $\cC$ is a groupoid up to homotopy.  We will prove the following theorem and then derive some consequences.

\begin{theorem}\label{thm:aggregate_is_projective}
When $\pi_0 \cC$ is a groupoid, the aggregate model structure and the projective model structure on the category of $\cC$-spectra coincide.  Consequently, the adjunction $(\beta, \Phi)$ is a Quillen equivalence between the projective model structure on $\cC$-spectra and the stable model structure on parametrized spectra over $B\cC$.
\end{theorem}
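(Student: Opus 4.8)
The plan is to prove Theorem \ref{thm:aggregate_is_projective} by showing that when $\pi_0 \cC$ is a groupoid, the aggregate equivalences coincide with the object-wise stable equivalences; the two model structures then agree since they have the same cofibrations by Theorem \ref{thm:agg_model_str}, and the Quillen equivalence statement is immediate from Theorem \ref{thm:agg_quillen_equiv}. We already know from the discussion following Lemma \ref{lemm:cofibrant_detection} that object-wise stable equivalences are aggregate equivalences, so the real content is the converse: if $f \colon X \arr Y$ is an aggregate equivalence, then each $f(c) \colon X(c) \arr Y(c)$ is a stable equivalence of orthogonal spectra.

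The key observation is that when $\cC$ is a homotopy groupoid, every $\cC$-spectrum is automatically ``almost constant'' in the sense of Theorem \ref{thm:agg_model_str}: for any morphism $c \arr d$ in $\cC$, the induced map $X(c) \arr X(d)$ is a stable equivalence, because it is invertible up to homotopy — pick a morphism in $\cC(d,c)$ representing an inverse in $\pi_0\cC$, and the two composites are homotopic to identities through paths in the morphism spaces, hence act as stable equivalences. Thus the machinery of Lemma \ref{lem:is_actually_hocolim} and Lemma \ref{lemm:thmB} becomes available: replacing $X$ and $Y$ by object-wise $qf$-fibrant-and-cofibrant (say by first applying a levelwise $qf$-fibrant replacement functor, which preserves all object-wise equivalences and keeps diagrams almost constant), the projection $\pi_X \colon E\cC_+ \osma_\cC X \arr B\cC$ is at each level $V$ a quasifibration with fiber over $[c]$ naturally the space $X(c)_V$, by Lemma \ref{lemm:thmB} applied level-wise. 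The analogous statement holds for $Y$, and the map $E\cC_+ \osma_\cC f$ is a map over $B\cC$ covering the identity, restricting on the fiber over $[c]$ to $f(c)$ levelwise.

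Now I would run the standard argument. Since $f$ is an aggregate equivalence, $E\cC_+ \osma_\cC X \arr E\cC_+ \osma_\cC Y$ is a stable equivalence of parametrized spectra; combined with Lemma \ref{lem:is_actually_hocolim} and Lemma \ref{lemm:cofibrant_detection}, and using that both sides present the levelwise homotopy colimit, this means the map of fiber spectra over $[c]$ — computed after $qf$-fibrant replacement, which by Lemma \ref{lem:different_fib_replace} detects stable equivalences — is a $\pi_*$-isomorphism. But the remark following Lemma \ref{lem:different_fib_replace} tells us precisely that for a map of $qf$-fibrant parametrized spectra, being a stable equivalence is equivalent to each map of (genuine, not derived) fibers being an equivalence of orthogonal spectra, and it suffices to check one point in each path component. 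Since $B\cC$ is connected when $\cC$ has one object (and in general the path components of $B\cC$ correspond to isomorphism classes in $\pi_0\cC$), and the fiber over $[c]$ is identified with $X(c)$ up to the quasifibration correction, we deduce that $f(c)$ is a stable equivalence for a representative $c$ in each component; then almost-constancy propagates this to all objects. Hence $f$ is an object-wise stable equivalence.

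The main obstacle will be bookkeeping the interplay between the three replacement functors in play — the $qf$-cofibrant replacement $E'\cC$ versus $E\cC$, the object-wise $qf$-fibrant replacement of the diagrams $X, Y$, and the levelwise quasifibration structure — and being careful that Lemma \ref{lemm:thmB} is genuinely applicable, i.e. that after fibrant replacement the diagram is still almost constant so that the Theorem B hypothesis holds at every level $V$. One must also check that the homeomorphism of Lemma \ref{lem:is_actually_hocolim} really does identify the fiber of $E\cC_+ \osma_\cC X$ over $[c]$ with $X(c)$ compatibly with $f$; this is where the ``inclusion and projection are identified with basepoints'' clause of that lemma does the work. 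A clean way to package all of this is to invoke Lemma \ref{lemm:rho_equiv} and Lemma \ref{lemm:rho_equiv_space_level}: applying $\Phi$ and using that the derived counit is a level equivalence (Proposition \ref{prop:two_claims}) together with the fact that $\bR\Phi$ evaluated at a $qf$-fibrant $M$ recovers the fiber spectra, one can read off object-wise information directly from the parametrized homotopy type, sidestepping some of the explicit quasifibration arguments.
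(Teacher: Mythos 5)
Your proposal is correct and is essentially the paper's own argument: the groupoid hypothesis on $\pi_0\cC$ makes every $\cC$-diagram automatically satisfy the hypothesis of Lemma \ref{lemm:thmB} (each induced map is homotopic, via paths in the morphism spaces, to an invertible one), so $E\cC_+ \osma_{\cC} X \arr B\cC$ is a level-wise quasifibration with fiber $X(c)$ over $[c]$, and stable equivalences are then detected on these genuine fibers via Lemma \ref{lem:different_fib_replace}, identifying aggregate equivalences with object-wise stable equivalences; the rest follows since the cofibrations agree and Theorem \ref{thm:agg_quillen_equiv} supplies the Quillen equivalence. The only difference is cosmetic: the paper needs no fibrant replacement of the diagrams $X$, $Y$ (the quasifibration property holds for arbitrary diagrams), so your replacement step and the one-point-per-component detour are superfluous, though harmless.
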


To prove this, we first observe that Lemma \ref{lemm:thmB} and our assumption on $\cC$ imply the following lemma.

\begin{lemma}
Suppose that $\pi_0 \cC$ is a groupoid and that $X$ is a $\cC$-space.   Then the canonical projection $\hocolim_{\cC} X \arr B\cC$ is a quasifibration whose fiber over a point $[c] \in B\cC$ is naturally isomorphic to $X(c)$.
\end{lemma}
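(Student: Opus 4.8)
The plan is to deduce this from Quillen's Theorem~B in the guise of Lemma~\ref{lemm:thmB}, applied to the based $\cC$-space $X_+$ obtained by adjoining a disjoint basepoint. First I would record that the hypothesis on $\cC$ forces every structure map of $X$ to be a homotopy equivalence. Given a morphism $f \colon c \arr d$ in $\cC$, the assumption that $\pi_0\cC$ is a groupoid supplies a morphism $g \colon d \arr c$ together with $[g \circ f] = [\id_c]$ in $\pi_0\cC(c,c)$ and $[f \circ g] = [\id_d]$ in $\pi_0\cC(d,d)$. Choosing paths in the morphism spaces realizing these identities and pushing them through the continuous action maps $\cC(c,c) \arr \Map(X(c),X(c))$ and $\cC(d,d) \arr \Map(X(d),X(d))$ produces homotopies $X(g)\circ X(f) \simeq \id_{X(c)}$ and $X(f)\circ X(g) \simeq \id_{X(d)}$. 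Hence each $X(f)$ is a homotopy equivalence, and in particular each $X(f)_+ \colon X(c)_+ \arr X(d)_+$ is a weak homotopy equivalence.

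Next I would identify the relevant total space. Since $X_+$ is, as a $\cC$-diagram, the disjoint union of $X$ with the constant diagram at a point, the space-level version of the computation carried out in the proof of Lemma~\ref{lem:is_actually_hocolim}, applied to $X_+$, yields a homeomorphism over $B\cC$
\[
E\cC_+ \osma_\cC X_+ \;\cong\; (\hocolim_\cC X) \amalg B\cC,
\]
in which the section is the inclusion of the $B\cC$ summand, the projection to $B\cC$ restricts to the canonical projection $\hocolim_\cC X \arr B\cC$ on the first summand and to the identity on the second, and over a point $[c]$ the basepoint decomposition $X(c)_+ = X(c) \amalg \{\ast\}$ matches this splitting of summands. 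Now Lemma~\ref{lemm:thmB} applies to $X_+$, all of whose structure maps are weak equivalences: the projection $E\cC_+ \osma_\cC X_+ \arr B\cC$ is a quasifibration with fiber naturally $X(c)_+$ over $[c]$.

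To transfer this conclusion back to $\hocolim_\cC X$ I would invoke the elementary observation that a map of the form $p \amalg \id_B \colon Z \amalg B \arr B$ is a quasifibration if and only if $p \colon Z \arr B$ is: over a point $b \in B$ the homotopy fiber of $p \amalg \id_B$ is the disjoint union of $\mathrm{hofib}_b(p)$ with the contractible space of paths in $B$ ending at $b$, and the comparison map out of the point-set fiber $p^{-1}(b) \amalg \{b\}$ decomposes accordingly, the $\{b\}$-summand mapping by a weak equivalence into the contractible factor. Applying this to the projection $\hocolim_\cC X \arr B\cC$, the quasifibration property of $E\cC_+ \osma_\cC X_+ \arr B\cC$ descends to $\hocolim_\cC X \arr B\cC$, and restricting the natural fiber identification of Lemma~\ref{lemm:thmB} to the first summand gives the natural isomorphism between the fiber of $\hocolim_\cC X$ over $[c]$ and $X(c)$.

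I expect the only genuinely delicate point to be the first step: extracting from the hypothesis that $\pi_0\cC$ is a groupoid the conclusion that every $X(f)$ is an actual weak homotopy equivalence, which uses the topological enrichment of $\cC$ and the continuity of the functor $X$ in an essential way. Everything afterward is formal manipulation built on Lemmas~\ref{lemm:thmB} and~\ref{lem:is_actually_hocolim}.
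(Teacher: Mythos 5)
Your proof is correct and follows the paper's intended route: the paper deduces this lemma directly from Lemma \ref{lemm:thmB} together with the observation that the groupoid hypothesis (via the topological enrichment) forces every structure map of $X$ to be a homotopy equivalence, which is exactly your first step. The only difference is bookkeeping: you make the based/unbased translation explicit by applying Lemma \ref{lemm:thmB} to $X_+$ and splitting off the $B\cC$ summand, a formal reduction the paper leaves implicit.
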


Then the proof of Theorem \ref{thm:aggregate_is_projective} follows from the lemma.  Indeed, it suffices to prove that the aggregate equivalences and the object-wise stable equivalences coincide.  By definition, a map $f \colon X \arr Y$ of $\cC$-spectra is an aggregate equivalence precisely when the induced map $E\cC_{+} \osma_{\cC} f$ is a stable equivalence of parametrized spectra over $B\cC$.  By the lemma, the domain and codomain of this map are level-wise quasifibrations over $B\cC$.  Thus, $f$ is an aggregate equivalence if and only if each map $(E\cC_{+} \osma_{\cC} f)_{[c]}$ of fiber spectra is a stable equivalence.  We may identify such a map with the map of spectra $f(c) \colon X(c) \arr Y(c)$ indexed by the object $c$ of $\cC$, so the proof is complete.

\begin{corollary}\label{thm:equiv_G_monoid}
Suppose that $G$ is a group-like topological monoid that is a retract of a CW complex.  Then $(\beta, \Phi)$ is a Quillen equivalence between the projective $q$-model structure on the category of spectra with an action of $G$ and the stable model structure on the category of parametrized spectra over $BG$.
\end{corollary}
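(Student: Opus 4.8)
The plan is to obtain this as a direct specialization of Theorem~\ref{thm:aggregate_is_projective}. First I would regard the group-like monoid $G$ as a topological category $\cC_G$ with a single object $\ast$ and morphism monoid $\cC_G(\ast,\ast) = G$, with composition given by the multiplication in $G$. The standing hypotheses on a topological category from \S\ref{sec:parametrized_intro} hold for $\cC_G$ precisely because $G$ is assumed to be a retract of a CW complex: the only morphism space is $G$ itself, and the inclusion of the identity $\ast \arr G$ is then an $h$-cofibration of unbased spaces, as required. Under this correspondence a topological functor $\cC_G \arr \Sp$ is exactly an orthogonal spectrum equipped with a continuous $G$-action, and a natural transformation is exactly a $G$-equivariant map, so $\cC_G\Sp = \Fun(\cC_G,\Sp)$ is canonically the category of spectra with a $G$-action. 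This identification is compatible with the model structures: the projective $q$-model structure on $\cC_G\Sp$, whose weak equivalences and fibrations are detected object-wise, becomes the projective $q$-model structure on $G$-spectra, whose weak equivalences and fibrations are detected by the forgetful functor to $\Sp$. Finally, the classifying space $B\cC_G$ is by construction the usual bar construction $BG$, so $\Sp_{B\cC_G} = \Sp_{BG}$ with its stable model structure, and the adjunction $(\beta,\Phi)$ of \S\ref{sec:parametrized_intro} becomes the adjunction in the statement.

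It then remains to check that $\cC_G$ is a groupoid up to homotopy, i.e.\ that $\pi_0\cC_G$ is a groupoid. Applying $\pi_0$ to the single morphism space yields the one-object category whose morphism monoid is $\pi_0 G$ with the multiplication induced from $G$. By definition, $G$ being group-like means precisely that this monoid $\pi_0 G$ is a group; hence $\pi_0\cC_G$ is a one-object groupoid. Theorem~\ref{thm:aggregate_is_projective} now applies to $\cC = \cC_G$: the aggregate and projective model structures on $\cC_G\Sp$ coincide, and $(\beta,\Phi)$ is a Quillen equivalence between the projective model structure on $\cC_G\Sp$ and the stable model structure on $\Sp_{B\cC_G}$. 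Translating back through the identifications above yields the corollary verbatim.

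There is essentially no hard step here, since the content has been absorbed into Theorem~\ref{thm:aggregate_is_projective}. The only points requiring care are the two bookkeeping observations: that the hypothesis ``$G$ is a retract of a CW complex'' is exactly the standing assumption needed on $\cC_G$ (so that $E\cC_G$ is well behaved and Lemma~\ref{lemm:thmB} is available), and that ``$G$ is group-like'' is exactly the condition that makes $\pi_0$ of the one-object category $\cC_G$ into a groupoid rather than merely a monoid. Once these are in place, the corollary is immediate.
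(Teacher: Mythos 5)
Your proposal is correct and matches the paper's intent exactly: the corollary is stated as an immediate specialization of Theorem~\ref{thm:aggregate_is_projective} to the one-object topological category with morphism space $G$, with the CW-retract hypothesis supplying the standing assumption of \S\ref{sec:parametrized_intro} and group-likeness making $\pi_0$ of that category a groupoid. The bookkeeping identifications you spell out (G-spectra as $\cC$-spectra, $B\cC = BG$, projective structures matching) are precisely what the paper leaves implicit.
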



This is just a more detailed statement of Theorem \ref{thm:intro_quillen_equiv_G} from the introduction, and has now been proved. Although this corollary was the motivation for the present paper, the generality of Theorem \ref{thm:aggregate_is_projective} provides us with many more model categories that are also Quillen equivalent to the category of parametrized spectra. We summarize two interesting and somewhat related special cases in the remarks below.

\begin{remark}\label{rem1}
Let $X$ be a CW complex and let $\cC$ be the category of simplices over $X$, so that $B\cC$ is weakly equivalent to $X$. The objects of $\cC$ are contractible spaces, so a fibrant $\cC$-diagram in the aggregate model structure may be interpreted as a homotopy functor. By homotopy left Kan extension and restriction, these correspond to the functors on the category of spaces over $X$ that are \emph{linear} in the sense of Goodwillie calculus. Our Quillen equivalence gives more structure to the statement by Goodwillie \cite{calc3}*{\S5} that linear functors on spaces over $X$ correspond to parametrized spectra over $X$. However, it stops short of giving a direct Quillen equivalence between the two. It would be interesting to see if our techniques may be combined with those of \citelist{\cite{BCR} \cite{BR}} to give such a direct equivalence.
\end{remark}

\begin{remark}\label{rem2}
Similar to the previous remark, let $\cC$ be a category consisting of contractible open subsets of $X$, which cover $X$ and are closed under finite intersections, so that the classifying space $B\cC^\op$ is equivalent to $X$. One can include $\cC$ into the larger category $\cD$ of all open subsets of $X$. In light of remarks such as \cite{de2012manifold}*{2.3}, it is reasonable to define a \emph{homotopy sheaf of spectra} to be any diagram equivalent to a homotopy right Kan extension of a diagram on $\cC^\op$ to $\cD^\op$.
Under this interpretation, Theorem \ref{thm:aggregate_is_projective} says that parametrized spectra over $X$ model locally constant homotopy sheaves of spectra over $X$, lifting an earlier result of Shulman for parametrized spaces \cite{Sh08}. Although we don't pursue in this paper the technical issues needed to make this correspondence precise, we consider it an interesting question whether our techniques could in fact be used to relate parametrized spectra to locally-constant sheaves living in existing model structures on presheaves of spectra on $\cD$, such as those found in \citelist{\cite{jardine} \cite{block_lazarev}}.
\end{remark}

\section{Indexed symmetric monoidal categories}\label{sec:indexed}

In this section, we check that the equivalence between the homotopy category of $G$-spectra and the homotopy category of spectra parametrized over $BG$ respects derived base-change functors on either side.  We use the language of indexed symmetric monoidal categories \cite{PS12} to describe this agreement of base-change functors.

Let $S$ be a cartesian monoidal category.  An $S$-indexed symmetric monoidal category is a pseudofunctor $\cM \colon S^{\op} \arr \mathrm{SymmMonCat}$ to the 2-category of symmetric monoidal categories, strong symmetric monoidal functors, and monoidal transformations.  In other words, for each object $A$ of $S$ there is a symmetric monoidal category $\cM^{A}$, and for each morphism $f \colon A \arr B$ there is a strong symmetric monoidal functor $f^* \colon \cM^{B} \arr \cM^{A}$, along with natural monoidal isomorphisms $(g \circ f)^* \cong f^* \circ g^*$ and $(\id_{A})^* \cong \id_{\cM^{A}}$ satisfying associativity and unit conditions.

We are interested in the case where the indexing category $S$ is the category $\Top$ of topological spaces, the category $\TopGrp$ of well-based group-like topological monoids, or the category $\Top_*^{\mathrm{conn}}$ of based connected topological spaces.
 
\begin{example}
The assignment $B \longmapsto \ho \Sp_{B}$, taking the space $B$ to the homotopy category of parametrized spectra over $B$, along with the derived base-change functors $f^* \colon \ho \Sp_{B} \arr \ho \Sp_{A}$, defines a $\Top$-indexed symmetric monoidal category $\ho \Sp_{(-)}$.
\end{example}
 
\begin{example}
Similarly, the assignment $G \longmapsto \ho \Sp_{BG}$ taking $G$ to the homotopy category of parametrized spectra over $BG$ defines a $\TopGrp$-indexed symmetric monoidal category $\ho \Sp_{B(-)}$. We assume that $G$ is well-based so that equivalences of groups $G \arr G'$ give equivalences of spaces $BG \arr BG'$ built using the bar construction.
\end{example}

\begin{example}
The assignment $G \longmapsto \ho \Mod_{\Sigma^\infty_+ G}$ taking $G$ to the homotopy category of module spectra over the ring spectrum $\Sigma^{\infty}_{+} G$ defines a $\TopGrp$-indexed symmetric monoidal category $\ho \Mod_{\Sigma^{\infty}_{+} (-)}$. The symmetric monoidal structure is given by the smash product $M \sma_{S} N$ over the sphere spectrum, with the diagonal action defined using the diagonal of $G$. More generally, the assignment $R \mapsto \ho \Mod_{R}$ defines a $\mathcal{H}\mathrm{opf}$-indexed symmetric monoidal category, where $\mathcal{H}\mathrm{opf}$ is the category of Hopf algebras in spectra, although we will not need that level of generality here.
\end{example}
 
An equivalence $\cM \simeq \cN$ of $S$-indexed symmetric monoidal categories consists of a pseudonatural transformation $\phi \colon \cM \arr \cN$ of pseudofunctors, each of whose components $\phi^{A} \colon \cM^{A} \arr \cN^{A}$ is an equivalence of categories.  More concretely, $\phi$ consist of a collection of strong symmetric monoidal functors $\phi^{A} \colon \cM^{A} \arr \cN^{A}$, each of which is an equivalence of categories, and natural monoidal isomorphisms 
\[
\xymatrix{
\cM^{B} \ar[d]_{f^*} \ar[r]^{\phi^{B}} \ar@{}[dr]|{\Downarrow\cong} & \cN^{B} \ar[d]^{f^*} \\
\cM^{A} \ar[r]^{\phi^{A}} & \cN^{A}
}
\]
that are suitably compatible with the coherence isomorphisms for the base-change functors in $\cM$ and $\cN$. With this language, we may now state precisely how our Quillen equivalences interact with change of groups $G$.


\begin{theorem}\label{thm:equiv_indexed_symm_mon_cats}
The derived right adjoint $\bR \Phi \colon \ho \Sp_{BG} \arr \ho \Mod_{\Sigma^{\infty}_{+}G}$ associated to the Quillen equivalence $(\beta, \Phi)$ induces an equivalence 
\[
\ho \Sp_{B(-)} \simeq \ho \Mod_{\Sigma^{\infty}_{+} (-)}
\]
of $\TopGrp$-indexed symmetric monoidal categories.
\end{theorem}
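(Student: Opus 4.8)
The plan is to verify the two pieces of data in an equivalence of $\TopGrp$-indexed symmetric monoidal categories: first, that each component $\bR\Phi \colon \ho\Sp_{BG} \arr \ho\Mod_{\Sigma^\infty_+ G}$ is a strong symmetric monoidal equivalence, and second, that these components are pseudonatural with respect to homomorphisms $f \colon G \arr G'$, i.e.\ that the squares relating $\bR\Phi$ to the derived base-change functors $f^*$ commute up to coherent monoidal isomorphism. The equivalence-of-categories part is already in hand: Corollary \ref{thm:equiv_G_monoid} (equivalently Theorem \ref{thm:intro_quillen_equiv_G}) says $(\beta,\Phi)$ is a Quillen equivalence, so $\bR\Phi$ is an equivalence on homotopy categories. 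What remains is to upgrade this to a \emph{monoidal} equivalence and to check compatibility with $f^*$.

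For the monoidal structure, I would first recall that the smash product of parametrized spectra $M \sma_{B\cC} N$ is the external smash over $B\cC \times B\cC$ pulled back along the diagonal $\Delta \colon B\cC \arr B\cC \times B\cC$, and that on the module side $M \sma_{\mathbb S} N$ carries the diagonal $G$-action pulled back along the group diagonal $G \arr G \times G$. The key computation is to produce a natural isomorphism $\bR\Phi(M) \sma_{\mathbb S} \bR\Phi(N) \simeq \bR\Phi(M \sma_{B\cC} N)$; dually, it is often cleaner to check that the \emph{left} adjoint $\bL\beta$ is strong monoidal, which is formal once one identifies $\beta(X) = E'\cC_+ \osma_\cC X$ and uses that $E\cC_+$ over $B\cC$, smashed with itself over $B\cC$, recovers the diagonal $\cC$-structure — this is essentially the statement that $B(\ast,\cC,\cC) \times_{B\cC} B(\ast,\cC,\cC) \simeq B(\ast,\cC,\cC)$ as $\cC$-diagrams over $B\cC$, which follows from the contractibility of $E\cC^c$ and the quasifibration property (Lemma \ref{lemm:thmB}). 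Since a left adjoint between closed symmetric monoidal categories that is strong monoidal automatically has a right adjoint which is lax monoidal, and here the right adjoint is an equivalence, one upgrades lax to strong by a standard argument (an oplax-monoidal equivalence with a strong-monoidal adjoint is itself strong). The unit is handled by observing $\bL\beta$ sends the sphere spectrum with trivial $G$-action to $\Sigma^\infty_{B\cC} B\cC_+$, the monoidal unit of $\Sp_{B\cC}$.

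For pseudonaturality in $G$, given $f \colon G \arr G'$ one has on the parametrized side the derived pullback $(Bf)^* \colon \ho\Sp_{BG'} \arr \ho\Sp_{BG}$ along $Bf \colon BG \arr BG'$, and on the module side the restriction functor $f^* \colon \ho\Mod_{\Sigma^\infty_+ G'} \arr \ho\Mod_{\Sigma^\infty_+ G}$ along the ring map $\Sigma^\infty_+ f$. I would construct the comparison $2$-cell at the level of point-set left adjoints: $Bf$ is modeled by the map of bar constructions, and the induced map $EG \arr EG'$ of principal bundles gives, for any $G$-spectrum $X$, a natural map $EG_+ \osma_G X \arr (Bf)^*(EG'_+ \osma_{G'} (f_! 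X))$ which one checks is a stable equivalence by passing to fibers and invoking the quasifibration statement of Lemma \ref{lemm:thmB} again — both fibers are identified with $X$ up to equivalence. Chasing this through the adjunctions gives the required natural monoidal isomorphism $f^* \circ \bR\Phi \cong \bR\Phi \circ (Bf)^*$. The associativity and unit coherences for composites $g \circ f$ and identities reduce to the corresponding coherences for the bar construction $E(-)$ as a functor of topological monoids, which are strict or hold up to canonical homotopy.

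The main obstacle I expect is bookkeeping the coherence data rather than any single hard equivalence: one must check that the monoidal structure isomorphism on $\bR\Phi$ and the base-change $2$-cells fit together to satisfy the hexagon/pentagon-type axioms for a monoidal pseudonatural transformation. The cleanest route is to do everything at the level of the (point-set) \emph{left} Quillen functor $\beta$ and the external smash product — where the diagonal and the bar construction interact strictly — and then transport to the homotopy category, so that derived coherence follows from underived coherence together with the cofibrancy hypotheses (e.g.\ $E\cC$ being $q$-cofibrant and $\osma_\cC$ being a Quillen bifunctor, as in Lemmas \ref{lem:pushout_product} and \ref{lemm:cofibrant_detection}). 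Carefully tracking which objects need $qf$-cofibrant replacement before the derived comparisons are valid is the part most likely to hide subtleties.
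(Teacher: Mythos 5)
Your outline gets the equivalence-of-categories part right, but both of the remaining steps have problems. For pseudonaturality, the comparison map you propose, $EG_+ \osma_G X \arr (Bf)^*\bigl(EG'_+ \osma_{G'} (f_! X)\bigr)$ for a $G$-spectrum $X$, is not a stable equivalence, and the fiber check does not go through: by Lemma \ref{lemm:thmB} the derived fiber of $EG'_+ \osma_{G'} f_! X$ over the basepoint of $BG'$ (hence of its pullback to $BG$) is the underlying spectrum of $f_! X = \Sigma^\infty_+ G' \sma_{\Sigma^\infty_+ G} X$, not $X$; already for $G$ trivial and $X = \bbS$ the map is $\bbS \arr \Sigma^\infty_+ G'$. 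The mate you actually need (the one whose conjugate on right adjoints is $f^* \circ \bR\Phi \cong \bR\Phi \circ (Bf)^*$) is $(Bf)_!\,\beta_G \Rightarrow \beta_{G'} \circ f_!$ over $BG'$, and checking that one ``on fibers'' is not a direct application of Lemma \ref{lemm:thmB}, since the source is not a level-wise quasifibration over $BG'$. The paper instead works on the right-adjoint side: it takes a cofibrant approximation of $Ef$ in $H$-spaces over $BG$ (where $f \colon H \arr G$), observes $(Bf)_! E'H_+ \simeq E'G_+$, and uses the chain $F_{BH}(E'H_+,(Bf)^*M) \simeq F_{BG}((Bf)_! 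E'H_+, M) \simeq F_{BG}(E'G_+, M)$ for $qf$-fibrant $M$, with the $q$-fibrancy of $F_{BG}(EG^{H\mathrm{-cof}}_+, M)$ guaranteeing that these point-set constructions compute derived functors. Your strategy can be repaired along those lines, but as written the key equivalence is false.

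For the monoidal structure, routing everything through the left adjoint is much less ``formal'' than you suggest: the fiberwise smash product is not known to make the May--Sigurdsson model structure a monoidal model category (the paper emphasizes this in the introduction), so a point-set monoidal structure on $\beta$ does not automatically descend to the derived smash product on $\ho\Sp_{BG}$; one would have to compare with May--Sigurdsson's separate construction of the symmetric monoidal structure on the homotopy category. The paper sidesteps this: by Lemma \ref{lemm:rho_equiv}, $\bR\Phi$ is identified with the derived fiber functor $i_{[*]}^*$, which is already known to be strong symmetric monoidal \cite{MS}*{13.7.10}, and that structure is lifted to $\bR\Phi$. Two smaller points: your identity $B(\ast,\cC,\cC)\times_{B\cC}B(\ast,\cC,\cC)\simeq B(\ast,\cC,\cC)$ is not correct (for a group, $EG \times_{BG} EG \cong EG \times G$; the fact you want is $(EG\times_G X)\times_{BG}(EG\times_G Y)\cong EG\times_G(X\times Y)$), and the theorem covers all well-based group-like topological monoids while Corollary \ref{thm:equiv_G_monoid} requires $G$ to be a retract of a CW complex, so the final CW-approximation step in the paper is needed and is missing from your outline.
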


\begin{proof}
When $G$ is the retract of a CW complex, the derived functor $\bR \Phi$ is an equivalence of categories by Corollary \ref{thm:equiv_G_monoid}. Let $[*]$ denote the canonical basepoint of $BG = |N_\cdot G|$. Then the derived functor is computed by applying $\Phi$ to a $qf$-fibrant spectrum, and so by Lemma \ref{lemm:rho_equiv}, we may lift the strong symmetric monoidal structure on the derived fiber functor $i_{[*]}^*$ \cite{MS}*{13.7.10} to the functor $\bR\Phi$.

In order to prove that $\bR \Phi$ is compatible with base-change functors, suppose we are given a continuous homomorphism $f \colon H \arr G$ and consider the induced diagram of spaces
\[
\xymatrix{
EH \ar[r]^{Ef} \ar[d] & EG \ar[d] \\
BH \ar[r]^{Bf} & BG.
}
\]
Let $Ef^{H\mathrm{-cof}} \colon EH^{H\mathrm{-cof}} \arr EG^{H\mathrm{-cof}}$ be a $qf$-cofibrant approximation of the homotopy equivalence $Ef$ in the projective model structure on $H$-spaces over $BG$.  Taking fiberwise suspension spectra, $Ef^{H\mathrm{-cof}}$ induces a stable equivalence of $qf$-cofibrant $H$-spectra over $BG$.  Notice that since $F_{BG}(-, -)$ is a Quillen bifunctor, if $M$ is a $qf$-fibrant spectrum over $BG$, then the spectrum $F_{BG}(EG^{H\mathrm{-cof}}_+, M)$ is a $q$-fibrant $H$-spectrum, and in particular agrees with the value of the derived restriction functor $f^* \colon \Mod_{\Sigma^{\infty}_{+}G} \arr \Mod_{\Sigma^{\infty}_{+}H}$ on $\Phi M = F_{BG}(EG^{G\mathrm{-cof}}_+, M)$.

We now pass to derived functors and abbreviate $E'H = EH^{H\mathrm{-cof}}$ and $E'G = EG^{G\mathrm{-cof}}$, as in the previous sections.  The derived pull-back functor 
\[
(Bf)^* \colon \ho \Sp_{BG} \arr \ho \Sp_{BH}
\]
has a left adjoint $(Bf)_{!}$ and the map $Ef$ induces a stable equivalence 
\[
(Bf)_{!}E'H _+\simeq E'G_+
\]
of spectra over $BG$.  It follows that there is a natural equivalence of derived functors
\begin{align*}
(\bR \Phi) (Bf)^* (M) &= F_{BH}(E'H_+, (Bf)^*M) \\
&\simeq F_{BG}((Bf)_! E'H_+, M) \\
& \simeq F_{BG}(E'G_+, M) = f^* (\bR \Phi) (M),
\end{align*}
and it is easily checked that this composite is a monoidal transformation and is compatible with the coherence isomorphisms $(g \circ f)^* \cong f^* \circ g^*$ for the base-change functors on $\ho \Sp_{B(-)}$ and $\ho \Mod_{\Sigma^{\infty}_{+}(-)}$.  

Finally, we may extend the result to arbitrary well-based group-like topological monoids $G$ by taking an approximation $G' \oarr{\simeq} G$ of $G$ by a group-like topological monoid that is a CW complex and composing with the induced equivalences of homotopy categories $\ho \Mod_{\Sigma^{\infty}_{+} G} \simeq \ho \Mod_{\Sigma^{\infty}_{+} G'}$ and $\ho \Sp_{BG} \simeq \ho \Sp_{BG'}$.  This completes the proof.
\end{proof}

This completes our analysis of how spectra over $BG$ for varying $G$ correspond to module spectra. Our final task is to relate this to the larger category of all topological spaces. One can make a module-theoretic description of the $\Top$-indexed symmetric monoidal category $\ho \Sp_{(-)}$ of parametrized spectra, using modules over ``ring spectra with many objects'', i.e. spectrally enriched categories. However, here we will be content to restrict attention to the category $\Top^{\conn}_*$ of pointed connected topological spaces.

Any well-based variant of the Moore loop space defines a functor $\Omega \colon \Top^{\conn}_* \arr \TopGrp$. It induces an equivalence of homotopy categories, with inverse the classifying space functor $B \colon \TopGrp \arr \Top^{\conn}_*$.  There is a map of spaces $\xi \colon B \Omega X \arr X$ that is a weak homotopy equivalence when $X$ is connected \cite{May_class_fib}*{15.4}.  The derived base-change functor 
\[
\xi^* \colon \ho \Sp_{X} \arr \ho \Sp_{B \Omega X}
\]
is an equivalence of homotopy categories, and commutes with the derived base-change functor $f^*$ of parametrized spectra induced by a map $f \colon X \arr Y$ of pointed connected spaces.  This implies the following consequence of Theorem \ref{thm:equiv_indexed_symm_mon_cats}.

\begin{corollary}\label{cor:indexed_equiv}
The composite $\bR \Phi \circ \xi^*$  induces an equivalence 
\[
\ho \Sp_{(-)} \simeq \ho \Mod_{\Sigma^{\infty}_{+} \Omega(-)}
\]
of $\Top^{\conn}_*$-indexed symmetric monoidal categories.
\end{corollary}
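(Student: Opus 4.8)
The plan is to realize $\bR\Phi \circ \xi^*$ as a composite of two $\Top^{\conn}_*$-indexed equivalences: one built from the comparison $\xi$ between a connected space and the classifying space of its loop group, and one obtained by restricting Theorem~\ref{thm:equiv_indexed_symm_mon_cats} along $\Omega$. First I would fix a well-based Moore loop space model so that $\Omega \colon \Top^{\conn}_* \arr \TopGrp$ is an honest functor and the maps $\xi_X \colon B\Omega X \arr X$ assemble into a strict natural transformation $\xi \colon B\Omega \Rightarrow \id_{\Top^{\conn}_*}$. This is essentially the only point-set bookkeeping the argument requires, and it is precisely why one insists on a well-based model.

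Given this, whiskering the pseudofunctor $\ho\Sp_{(-)}$ with $\xi$ produces a pseudonatural transformation $\ho\Sp_{(-)} \Rightarrow \ho\Sp_{B\Omega(-)}$ of $\Top^{\conn}_*$-indexed symmetric monoidal categories whose component at $X$ is the derived pullback $\xi_X^* \colon \ho\Sp_X \arr \ho\Sp_{B\Omega X}$. Each such component is strong symmetric monoidal, being one of the structure functors of $\ho\Sp_{(-)}$, and is an equivalence of categories since $\xi_X$ is a weak homotopy equivalence for connected $X$ \cite{May_class_fib}*{15.4}; so $\xi^*$ is an equivalence $\ho\Sp_{(-)} \simeq \ho\Sp_{B\Omega(-)}$ of indexed symmetric monoidal categories. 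Separately, precomposing the $\TopGrp$-indexed equivalence of Theorem~\ref{thm:equiv_indexed_symm_mon_cats} with the functor $\Omega$ yields a $\Top^{\conn}_*$-indexed equivalence $\bR\Phi \colon \ho\Sp_{B\Omega(-)} \simeq \ho\Mod_{\Sigma^\infty_+\Omega(-)}$.

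It then remains to compose these two pseudonatural equivalences. The component functors of $\bR\Phi \circ \xi^*$ are composites of equivalences of categories, hence equivalences; they are composites of strong symmetric monoidal functors, hence strong symmetric monoidal; and for $f \colon X \arr Y$ in $\Top^{\conn}_*$ the monoidal comparison $2$-cell is obtained by pasting the cell for $\xi^*$ --- which comes from the equality $f \circ \xi_X = \xi_Y \circ B\Omega f$ together with the composition coherence of $\ho\Sp_{(-)}$ --- with the cell for $\bR\Phi$ supplied by Theorem~\ref{thm:equiv_indexed_symm_mon_cats}; compatibility of the pasted cells with the coherence isomorphisms $(g \circ f)^* \cong f^* \circ g^*$ is automatic. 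I expect the only genuine obstacle to be arranging the models in the first step so that $\Omega$ is strictly functorial and $\xi$ strictly natural; once that is in place the remainder is a formal manipulation of indexed symmetric monoidal categories, and if one only had naturality of $\xi$ up to homotopy this would still suffice after passing to homotopy categories.
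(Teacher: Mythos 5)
Your proposal is correct and follows essentially the same route as the paper: the paper likewise notes that the Moore loop space gives a functor $\Omega$, that $\xi_X^* \colon \ho\Sp_X \to \ho\Sp_{B\Omega X}$ is an equivalence commuting with the derived base-change functors $f^*$, and then deduces the corollary by restricting the $\TopGrp$-indexed equivalence of Theorem \ref{thm:equiv_indexed_symm_mon_cats} along $\Omega$. Your extra care about strict functoriality of $\Omega$ and naturality of $\xi$ is exactly the point-set input the Moore loop model supplies, so there is no gap.
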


\begin{remark}
Instead of working with homotopy categories, one may attempt to strengthen the theorem to the statement that the base-change functors on the actual categories agree up to coherent homotopy.  The work of Ando-Blumberg-Gepner \cite{ABG} using the language of $\infty$-categories is one way of making this idea precise.
\end{remark}

\begin{remark}\label{rem:free_loop}
By the uniqueness of adjoints, it follows that the equivalence of Corollary \ref{cor:indexed_equiv} commutes with the left and right adjoint $f_{!}$ and $f_*$ of $f^*$.  Let $\Delta \colon X \arr X \times X$ be the diagonal map of a space $X$, and consider the fiberwise suspension spectrum $(X, \Delta)_+ = \Sigma^{\infty}_{X \times X} (X, \Delta)_+$ of $X$ over $X \times X$.  Writing $r \colon X \arr *$ for the projection of $X$ to a point, one can compute that the derived base-change functors $r_{!} \Delta^* (X, \Delta)_+$ gives the stable homotopy type of the free loop space $LX = \Map(S^1, X)$.  On the other hand, $\bR \Phi (X, \Delta)_+$ is a model for the $\Sigma^{\infty} \Omega(X \times X)_+$-module spectrum $\Sigma^{\infty}_{+} \Omega X$.  Considering this as a bimodule, one finds that applying the derived base-change functors of module categories gives the THH of the spherical group ring:
\begin{align*}
r_! \Delta^* \bR \Phi (X, \Delta)_+ &\simeq \Sigma^{\infty}_{+} \Omega X \sma^{\bL}_{\Sigma^{\infty}_{+} \Omega X \sma \Sigma^{\infty}_{+} \Omega X^{\op}} \Sigma^{\infty}_{+} \Omega X \\
&\simeq \THH(\Sigma^{\infty}_{+} \Omega X; \Sigma^{\infty}_{+} \Omega X).
\end{align*}
Therefore, the equivalence of indexed symmetric monoidal categories proved in this paper recovers the equivalence $\THH(\Sigma^{\infty}_{+} \Omega X) \simeq LX_+$.  In the companion paper \cite{LM}*{\S5}, we generalize this to an equivalence of shadow functors on bicategories.  This comparison is a crucial component in our study of the transfer map of free loop spaces.
\end{remark}

\section{Costenoble-Waner dualizable spectra}\label{sec:cw_duality}

In this final section we prove Theorem \ref{thm:intro_cwdualizable}.  As background, we recall that May and Sigurdsson proved that every retract of a finite $qf$-cell spectrum in $\ho \Sp_{B}$ is Costenoble-Waner dualizable. 
They also characterized the Costenoble-Waner dualizable spectra as the compact objects of the triangulated category $\ho \Sp_{B}$ \cite{MS}*{18.2.2}.

However, this left open the converse question of whether every Costenoble-Waner dualizable spectra is in fact a homotopy retract of a finite $qf$-cell spectrum.  We answer this question in the affirmative.  As a result, this excludes the possibility of exotic parametrized spectra which are dualizable but do not have finite presentations in terms of cells and attaching maps.  

As discussed in \cite{MS}*{18.2.10}, the essential difficulty is that in the parametrized setting, one cannot simultaneously make a space both compact and fibrant. We will circumvent this by taking our compact objects in the category of module spectra, before applying the Borel construction to get parametrized spectra.

In fact, our methods work equally well to characterize all dualizable spectra in the bicategory $\Ex$ of spectra over varying base spaces.  Suppose that a spectrum $X$ over $A \times B$ has the property that every derived pullback $i_{a}^* X$ along the inclusion of a point $a \in A$ is a retract of a finite $qf$-cell spectrum in $\ho \Sp_{B}$.  The argument of May and Sigurdsson referenced above actually proves that $X \in \ho \Sp_{A \times B} = \Ex(A, B)$ is dualizable over $B$ as a 1-cell in the bicategory $\Ex$.  See the companion paper \cite{LM}*{\S4, 5} and the references \citelist{ \cite{MS} \cite{PS12}} for more details on duality in the bicategories $\Ex$ and $\Bimod$. We will now prove the converse.

Suppose that $X \in \Ex(A, B)$ is dualizable over $B$.  In other words, $X$ is a spectrum over $A \times B$ and we are given another spectrum $Y \in \Ex(B, A) = \ho \Sp_{B \times A}$ and morphisms
\[
\coev \colon U_{A} \arr X \odot_{B} Y \qquad \eval \colon Y \odot_{A} X \arr U_{B}
\]
in the homotopy categories of spectra over $A \times A$ and $B \times B$ satisfying the triangle identities for a duality.  Here, the bifunctor $X \odot_{B} Y = \pi^{B}_{!} \Delta_{B}^* (X \osma Y)$ is the horizontal composition in the bicategory $\Ex$ \cite{MS}*{\S17}.  Specializing to the case $A = \ast$, we have the notion of a Costenoble-Waner dualizable spectrum $X$ in $\ho \Sp_{B}$.  

We assume without loss of generality that $A$ and $B$ are connected and pointed.  For the sake of clarity, let us write
\[
\Phi_{X} \colon \ho \Sp_{X} \arr \ho \Sp_{B\Omega X} \arr \ho \Mod_{\Sigma^{\infty}_{+} \Omega X}
\]
 for the composite equivalence from Corollary \ref{cor:indexed_equiv}.  The morphisms $\Phi_{A \times A}(\coev)$ and $\Phi_{B \times B}(\eval)$ exhibit $\Phi_{A \times B}X$ and $\Phi_{B \times A} Y$ as a dual pair in the bicategory $\Bimod$.  In particular, forgetting the action of $\Omega A$, the spectrum $\Phi_{A \times B} X$ is dualizable as a $\Sigma^{\infty}_{+} \Omega B$-module, so it is a retract of a finite cell spectrum in $\ho \Mod_{\Sigma^{\infty}_{+}\Omega B}$.  

It now suffices to prove that the inverse of the equivalence $\Phi_{B}$ takes finite $\Sigma^{\infty}_{+}\Omega B$-cell spectra to finite $qf$-cell spectra over $B$.
Without loss of generality, we may work in $\ho \Sp_{BG}$, where $G$ is a topological group.
In other words, Theorem \ref{thm:intro_cwdualizable} follows from:
\begin{proposition}
If $X$ is a finite $\Sigma^{\infty}_{+}G$-cell spectrum, then $\beta X$ is equivalent in $\ho \Sp_{BG}$ to a finite $qf$-cell spectrum.
\end{proposition}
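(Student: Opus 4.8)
The plan is to induct on the number of cells of $X$, using two properties of $\beta$: it is a left adjoint, hence preserves pushouts, and it is a left Quillen functor for the projective model structure on $\Sigma^{\infty}_{+}G$-modules (Theorem \ref{thm:aggregate_is_projective}), hence preserves cofibrations, cofibrant objects, and homotopy pushouts. Write $X$ as an iterated pushout $\ast = X_0 \arr X_1 \arr \cdots \arr X_m = X$ in which each $X_k$ is obtained from $X_{k-1}$ by attaching a single free cell along a map of the form $F_{V_k}(S^{n_k-1}_+ \arr D^{n_k}_+) \sma G_+$. The base case $\beta(\ast) = \ast$ is the empty $qf$-cell spectrum.

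The crux is to compute $\beta$ on a free cell. By the co-Yoneda lemma for the coend $\osma_G$, for any space $A$ one has $\beta(F_V A_+ \sma G_+) \cong E'G_+ \osma F_V(A_+) \cong F_V\bigl(\Sigma^{\infty}_{BG}(E'G \times A)_+\bigr)$, where $E'G \times A$ is regarded as a space over $BG$ via the projection $E'G \arr BG$. Taking $A = D^n$ and fixing a point $e' \in E'G$ lying over the canonical basepoint $[*] \in BG$, the map $(D^n)^{[*]} = (D^n \arr \ast \oarr{[*]} BG) \hookrightarrow (E'G \times D^n \arr BG)$, $d \mapsto (e',d)$, is a map of spaces over $BG$ that is a homotopy equivalence on total spaces (both are contractible). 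Since the parametrized suspension spectrum functor carries weak homotopy equivalences of total spaces to stable equivalences (\S\ref{sec:cells}), this yields a stable equivalence $F_V((D^n)^{[*]})_+ \oarr{\simeq} \beta(F_V D^n_+ \sma G_+)$, and likewise with $S^{n-1}$ in place of $D^n$; naturality of $\{e'\} \times (-)$ makes these compatible with the maps induced by $S^{n-1} \hookrightarrow D^n$. In particular, $\beta$ of the cell inclusion is identified, up to stable equivalence of arrows, with the map $F_V((S^{n-1})^{[*]})_+ \arr F_V((D^n)^{[*]})_+$, and since $(D^n)^{[*]}$ is an $f$-disk (\S\ref{sec:cells}) this arrow lies in $\qfI_{\Sp}$: it is a generating $qf$-cofibration of $\Sp_{BG}$.

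For the inductive step, assume $\beta X_{k-1}$ is stably equivalent to a finite $qf$-cell spectrum $Z_{k-1}$. The pushout defining $X_k$ is a homotopy pushout (it attaches a cofibration between cofibrant objects), so $\beta X_k$ is the homotopy pushout of $\beta(F_{V_k}D^{n_k}_+ \sma G_+) \longleftarrow \beta(F_{V_k}S^{n_k-1}_+ \sma G_+) \arr \beta X_{k-1}$. Replacing the three corners by the stably equivalent $F_{V_k}((D^{n_k})^{[*]})_+$, $F_{V_k}((S^{n_k-1})^{[*]})_+$, $Z_{k-1}$ (compatibly, by the previous paragraph) and choosing an actual map $F_{V_k}((S^{n_k-1})^{[*]})_+ \arr Z_{k-1}$ representing the transported attaching class, we conclude that $\beta X_k$ is stably equivalent to the spectrum obtained from $Z_{k-1}$ by attaching a single $qf$-cell along a map in $\qfI_{\Sp}$ --- again a finite $qf$-cell spectrum. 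Taking $k = m$ proves the proposition.

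The step I expect to be the main obstacle is the key computation in the second paragraph. It looks surprising at first: the ``large'' free cell $F_V D^n_+ \sma G_+$ has $\beta$-image whose total space is built from the infinite-dimensional space $E'G$ and whose $qf$-fibrant fibres are copies of $\Sigma^{\infty}_{+}G$, yet it collapses up to stable equivalence to a single small $qf$-cell sitting over one point of $BG$. The mechanism is exactly the tension highlighted in \S\ref{sec:cells}: because $\Sigma^{\infty}_{BG}(-)_+$ inverts weak equivalences of total spaces, a contractible total space over $BG$ may be shrunk to a point before stabilizing, even though this drastically changes the fibres. A secondary point needing care is that the corner-replacements are carried out compatibly with the attaching and cell-inclusion maps, so that the resulting homotopy pushout genuinely has the shape of a $qf$-cell attachment; this is what the naturality of $\{e'\} \times (-)$ is for.
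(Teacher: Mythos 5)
Your computation for a single free cell is correct and matches the paper's key mechanism: $\beta(F_V D^n_+\sma G_+)\cong F_V\bigl((E'G\times D^n)_+\bigr)$ collapses, via a point of $E'G$ over $[*]$, onto the generating $qf$-cofibration $F_V((S^{n-1})^{[*]})_+\arr F_V((D^n)^{[*]})_+$, and the paper does essentially the same thing (with $EG$ in place of $E'G$ after Lemma \ref{lemm:cofibrant_detection}, and after desuspending so that $X$ is the suspension spectrum of a finite free $G$-CW complex). The genuine gap is in your inductive step, at the clause ``choosing an actual map $F_{V_k}((S^{n_k-1})^{[*]})_+\arr Z_{k-1}$ representing the transported attaching class.'' That class lives in $\ho\Sp_{BG}$, where maps out of a cofibrant object are computed after $qf$-fibrantly replacing the target. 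Your $Z_{k-1}$ is a finite $qf$-cell spectrum, hence cofibrant but in general not fibrant, and a class in $\ho\Sp_{BG}$ into it need not be realized by any point-set map over $BG$: an honest map from $F_{V}((S^{n-1})^{[*]})_+$ into $Z_{k-1}$ lands in the actual fiber of $Z_{k-1}$ over $[*]$, which can differ wildly from the derived fiber where the transported class lives; and fibrantly replacing $Z_{k-1}$ to realize the class destroys finiteness. This is exactly the ``compact versus fibrant'' tension of \cite{MS}*{18.2.10} that the proposition exists to overcome, so it is the main obstacle, not the secondary bookkeeping point you describe it as.

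The paper's proof is organized around this very point. It works at the space level, inductively producing a finite $qf$-cell complex $Y_i$ and a map $f_i\colon Y_i\arr EG_+\osma_G X^{(i)}$ of ex-spaces that is a weak homotopy equivalence of total spaces; since $Y_i$ is not fibrant over $BG$, $f_i$ is inverted only in the homotopy category of plain spaces, yielding an attaching map $\alpha\colon S^{n-1}\arr Y_i$ together with a homotopy, not a map over $BG$. One then forms the double mapping cylinder $D^n\cup^h_{S^{n-1}}Y_i$, defines the section and the projection to $BG$ on the new cell only afterwards (via the comparison map to $EG_+\osma_G X^{(i+1)}$), and runs the homotopy at double speed so that a collar of the new disk projects through the boundary sphere, making it an $f$-disk and hence an honest $qf$-cell. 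To repair your module-by-module induction you would need either to import this cylinder-and-reparametrization argument, or to strengthen the inductive hypothesis from ``stably equivalent'' to a statement strong enough (e.g.\ a weak equivalence of total spaces at each level, inverted only after forgetting the $BG$-structure) to transport attaching maps without fibrant replacement; as written, the inductive step does not go through.
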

\begin{proof}
After sufficient desuspensions, we may assume that that $X$ is the suspension spectrum of a finite free $G$-cell complex of pointed spaces, which we also denote by $X$.  By Lemma \ref{lemm:cofibrant_detection}, it suffices to prove the result for the functor $X \longmapsto EG_+ \osma_{G} X$ instead of $\beta$.  Write $X$ as the colimit of equivariant skeleta $X^{(i)}$ where $X^{(i+1)}$ is obtained from $X^{(i)}$ by attaching a single cell of the form $G \times D^n$. We inductively construct a finite $qf$-cell complex $Y_i$ over $BG$ with a weak homotopy equivalence $f_i: Y_i \arr EG_+ \osma_{G} X^{(i)}$ of ex-spaces over $BG$. The base case is $Y_0 = BG = EG_+ \osma_{G} X^{(0)}$.  For the inductive step, we apply $EG_+ \osma_{G}(-)$ to the pushout which constructs $X^{(i + 1)}$ from $X^{(i)}$, and build around it the following diagram of ex-spaces.
\[ \xymatrix{
&&& Y_i \ar[d]^-\simeq_-{f_i} \\
S^{n-1}_+ \ar[r]_-{(*, \id)} \ar@{-->}[rrru]^-{\alpha} & (EG \times S^{n-1})_+ \ar[r]^-\cong \ar[d] & EG_+ \osma_{G} (G \times S^{n-1})_+ \ar[r] \ar[d] & EG_+ \osma_{G} X^{(i)} \ar[d] \\
& (EG \times D^n)_+ \ar[r]^-\cong & EG_+ \osma_{G} (G \times D^n)_+ \ar[r] & EG_+ \osma_{G} X^{(i+1)}
} \]
On the left, we include $S^{n - 1}$ into $EG \times S^{n - 1}$ using the basepoint $* \in EG$.  The weak equivalence $f_i$ is provided by the inductive hypothesis. We observe that $Y_i$ need not be fibrant over $BG$, so $f_i$ may not have an inverse in the homotopy category of spaces over $BG$, only in the homotopy category of spaces.  The latter maneuver gives a map $\alpha \colon S^{n - 1} \arr Y_i$ as in the diagram and a homotopy between the two routes in the upper triangle. This gives a diagram
\[
\xymatrix{
D^{n} \ar[d]_{(*, \id)}^-\simeq & S^{n - 1} \ar[d]_{(*, \id)}^-\simeq \ar[l]_{i} \ar[r]^-{\alpha} & Y_i \ar[d]^{\simeq}_-{f_i} \\
EG \times D^n & EG \times S^{n - 1} \ar[l]_-{\id \times i} \ar[r] & EG_+ \osma_{G} X^{(i)}
}
\]
in which the left square commutes and the right square commutes up to a given homotopy. This in turn induces a weak equivalence of spaces
\[
D^{n} \cup^{h}_{S^{n - 1}} Y_i \arr EG_+ \osma_{G} X^{(i + 1)}
\]
from the double mapping cylinder of the top row to the pushout of the bottom row.  We insist that the induced map uses the given homotopy at double-speed along the first half of the mapping cylinder, then is constant at $\alpha$ for the second half of the mapping cylinder.  Let us denote this new map by $f_{i+1}$, and denote its source by $Y_{i+1}$.  We define the inclusion $BG \arr Y_{i+1}$ by composing the inclusion $BG \arr Y_i$ with the inclusion of one of the ends of the mapping cylinder. We define the projection $Y_{i+1} \arr BG$ by composing $f_{i+1}$ with the projection of $EG_+ \osma_{G} X^{(i + 1)}$. Observe that our requirement on how $f_{i + 1}$ is defined in terms of the given homotopy to $\alpha$ makes the cell we have just attached to $Y_{i}$ a $qf$-cell.  In effect, this amounts to composing the projection map $D^n \arr BG$ of the new cell for $Y_{i+1}$ with 
the expansion by two map
\begin{align*}
D^n &\arr D^n \\
x &\longmapsto \begin{cases}
2x \quad &\text{if $\abs{x} \leq \tfrac{1}{2}$} \\
x/\abs{x} \quad &\text{if $\abs{x} > \tfrac{1}{2}$},
\end{cases}
\end{align*}
and this makes the resulting disk an $f$-disk over $BG$.
The map $f_{i+1}$ is a weak equivalence of ex-spaces with source a finite $qf$-cell, completing the inductive step of the proof.
\end{proof}

We note that it is a consequence of our proof that an object of $\ho \Sp_{B}$ is equivalent to a finite $q$-cell spectrum if and only if it is equivalent to a finite $qf$-cell spectrum.  Recall that Waldhausen's functor $A(B)$ may be described as the $K$-theory of the Waldhausen category of retractive spaces over $B$ that are retracts of finite $q$-cell complexes.  Since $K$-theory is invariant under stabilization of Waldhausen categories, $A$-theory may equivalently be described as the $K$-theory of parametrized finite $q$-cell spectra over $B$, and thus also as the $K$-theory of parametrized finite $qf$-cell spectra over $B$.  Theorem \ref{thm:intro_cwdualizable} then implies the interpretation of Waldhausen's $A$-theory stated in Corollary \ref{cor}.

\begin{bibdiv}
\begin{biblist}

\bib{ABG}{article}{
  title={Parametrized spectra, multiplicative {T}hom spectra, and the twisted {U}mkehr map},
  author={Ando, M.}
  author={Blumberg, A.J.}
  author={Gepner, D.},
  journal={arXiv:1112.2203},
  year={2011}
}

\bib{BCR}{article}{
  title={Calculus of functors and model categories},
  author={Biedermann, Georg}
  author={Chorny, Boris}
  author={R{\"o}ndigs, Oliver},
  journal={Adv. Math.},
  volume={214},
  number={1},
  pages={92--115},
  year={2007},
  publisher={Elsevier}
}

\bib{BR}{article}{
  title={Calculus of functors and model categories, {II}},
  author={Biedermann, Georg},
  author={R{\"o}ndigs, Oliver},
  journal={Alg. Geom. Topol.},
  volume={14},
  number={5},
  pages={2853--2913},
  year={2014},
  publisher={Mathematical Sciences Publishers}
}

\bib{block_lazarev}{article}{
  title={Homotopy theory and generalized duality for spectral sheaves},
  author={Block, Jonathan},
  author={Lazarev, Andrey},
  journal={Int. Math. Research Notices},
  volume={1996},
  number={20},
  pages={983--996},
  year={1996},
  publisher={[Durham, NC]: Duke University Press,[1991]-}
}

\bib{cisinski}{article}{
AUTHOR = {Cisinski, Denis-Charles},
     TITLE = {Locally constant functors},
   JOURNAL = {Math. Proc. Cambridge Philos. Soc.},
  FJOURNAL = {Mathematical Proceedings of the Cambridge Philosophical
              Society},
    VOLUME = {147},
      YEAR = {2009},
    NUMBER = {3},
     PAGES = {593--614},
      ISSN = {0305-0041},
   MRCLASS = {18A25 (18G55 55U35)},
  MRNUMBER = {2557145},
MRREVIEWER = {Philippe Gaucher},
}

\bib{Clapp}{article}{
AUTHOR = {Clapp, M\'onica},
     TITLE = {Duality and transfer for parametrized spectra},
   JOURNAL = {Arch. Math. (Basel)},
  FJOURNAL = {Archiv der Mathematik. Archives of Mathematics. Archives
              Math\'ematiques},
    VOLUME = {37},
      YEAR = {1981},
    NUMBER = {5},
     PAGES = {462--472},
      ISSN = {0003-889X},
   MRCLASS = {55P25},
  MRNUMBER = {643290},
MRREVIEWER = {J. M. Boardman},
       }

\bib{Clapp_Puppe}{article}{
 AUTHOR = {Clapp, M\'onica}
 author={Puppe, Dieter},
     TITLE = {The homotopy category of parametrized spectra},
   JOURNAL = {Manuscripta Math.},
  FJOURNAL = {Manuscripta Mathematica},
    VOLUME = {45},
      YEAR = {1984},
    NUMBER = {3},
     PAGES = {219--247},
      ISSN = {0025-2611},
   MRCLASS = {55P42 (55R65)},
  MRNUMBER = {734840},
MRREVIEWER = {Richard John Steiner},
       }

\bib{de2012manifold}{article}{
  title={Manifold calculus and homotopy sheaves},
  author={de Brito, Pedro Boavida},
  author={Weiss, Michael S},
  journal={arXiv preprint arXiv:1202.1305},
  year={2012}
}

\bib{dugger_universal}{article}{
author={Dugger, D.}
title={Universal homotopy theories}
journal={Adv. Math.}
volume={164}
year={2001}
number={1}
pages={144--176}
}
 
       \bib{DDK}{article}{
        AUTHOR = {Dror, E.}
        author={Dwyer, W. G.}
        author={Kan, D. M.},
     TITLE = {Equivariant maps which are self homotopy equivalences},
   JOURNAL = {Proc. Amer. Math. Soc.},
  FJOURNAL = {Proceedings of the American Mathematical Society},
    VOLUME = {80},
      YEAR = {1980},
    NUMBER = {4},
     PAGES = {670--672},
      ISSN = {0002-9939},
   MRCLASS = {55P15},
  MRNUMBER = {587952},
MRREVIEWER = {Philip S. Hirschhorn},
       }

\bib{calc3}{article}{
  title={Calculus {III}: {T}aylor series},
  author={Goodwillie, Thomas G.},
  journal={Geometry \& Topology},
  volume={7},
  number={2},
  pages={645--711},
  year={2003},
  publisher={Mathematical Sciences Publishers}
}
       
  
\bib{harpaz_nuiten_prasma}{article}{
title={The abstract cotangent complex and Quillen cohomology of enriched categories}
author={Harpaz, Y.}
author={Nuiten, J.}
author={Prasma, M.}
journal={arXiv:1612.02608}
}

\bib{HS}{article}{
    AUTHOR = {Hess, Kathryn}
    author={Shipley, Brooke},
     TITLE = {Waldhausen {$K$}-theory of spaces via comodules},
   JOURNAL = {Adv. Math.},
  FJOURNAL = {Advances in Mathematics},
    VOLUME = {290},
      YEAR = {2016},
     PAGES = {1079--1137},
      ISSN = {0001-8708},
   MRCLASS = {55U35 (16T15 18C15 19D10 55P42 55P43)},
  MRNUMBER = {3451948},
MRREVIEWER = {Sarah Whitehouse},
}
       
\bib{HM}{article}{
AUTHOR = {Heuts, Gijs}
author={Moerdijk, Ieke},
     TITLE = {Left fibrations and homotopy colimits},
   JOURNAL = {Math. Z.},
  FJOURNAL = {Mathematische Zeitschrift},
    VOLUME = {279},
      YEAR = {2015},
    NUMBER = {3-4},
     PAGES = {723--744},
      ISSN = {0025-5874},
   MRCLASS = {18G55 (55U35)},
  MRNUMBER = {3318247},
MRREVIEWER = {J\~A\copyright{}r\~A'me Scherer},
       }
       
\bib{Hirschhorn}{book}{
    AUTHOR = {Hirschhorn, Philip S.},
     TITLE = {Model categories and their localizations},
    SERIES = {Mathematical Surveys and Monographs},
    VOLUME = {99},
 PUBLISHER = {American Mathematical Society, Providence, RI},
      YEAR = {2003},
     PAGES = {xvi+457},
      ISBN = {0-8218-3279-4},
   MRCLASS = {18G55 (55P60 55U35)},
  MRNUMBER = {1944041},
MRREVIEWER = {David A. Blanc},
}

\bib{jardine}{article}{
  title={Stable homotopy theory of simplicial presheaves},
  author={Jardine, John F},
  journal={Canad. J. Math},
  volume={39},
  number={3},
  pages={733--747},
  year={1987}
}

\bib{diagram_spaces}{article}{
title={Diagram spaces, diagram spectra, and spectra of units}
author={J.A. Lind}
journal={Algebr. Geom. Topol.}
volume={13}
date={2013}
number={4}
pages={1857--1935}
}

\bib{bundles_spectra}{article}{
title={Bundles of spectra and algebraic $K$-theory}
author={J.A. Lind}
journal={Pacific J. Math.}
volume={285}
number={2}
year={2016}
pages={427--451}
}

\bib{LM}{article}{
title={The transfer map of free loop spaces}
author={J.A. Lind}
author={C. Malkiewich}
journal={arXiv:1604.03067}
year={2016}
}

\bib{HTT}{book}{
title={Higher topos theory}
author={J. Lurie}
series={Annals of Mathematics Studies}
vol={170}
publisher={Princeton University Press, Princeton, NJ}
year={2009}
}

\bib{madsen_survey}{book}{
  title={Algebraic ${K}$-theory and traces},
  author={Madsen, I.},
  series={Current Developments in Mathematics},
  pages={191--321},
  year={1995},
  publisher={International Press}
}

\bib{malk_cyclotomic}{article}{
  author={Malkiewich, Cary},
  title={Cyclotomic structure in the topological {H}ochschild homology of $DX$},
  journal={arXiv preprint arXiv:1505.06778},
  year={2015}
}

\bib{MMSS}{article}{
AUTHOR = {Mandell, M.A.}
author={May, J.P.}
author={Schwede, S.}
author={Shipley, B.},
     TITLE = {Model categories of diagram spectra},
   JOURNAL = {Proc. London Math. Soc. (3)},
  FJOURNAL = {Proceedings of the London Mathematical Society. Third Series},
    VOLUME = {82},
      YEAR = {2001},
    NUMBER = {2},
     PAGES = {441--512},
      ISSN = {0024-6115},
   MRCLASS = {55P42 (18A25 18E30 55U35)},
  MRNUMBER = {1806878},
MRREVIEWER = {Mark Hovey},
       }

\bib{May_class_fib}{article}{
    AUTHOR = {May, J.P.},
     TITLE = {Classifying spaces and fibrations},
   JOURNAL = {Mem. Amer. Math. Soc.},
  FJOURNAL = {Memoirs of the American Mathematical Society},
    VOLUME = {1},
      YEAR = {1975},
    NUMBER = {1, 155},
     PAGES = {xiii+98},
      ISSN = {0065-9266},
   MRCLASS = {55F15 (57F20)},
  MRNUMBER = {0370579},
MRREVIEWER = {Harold Hastings},
}


\bib{MS}{book}{
title={Parametrized homotopy theory}
author={J.P. May}
author={J. Sigurdsson} 
date={2006}
series={Mathematical Surveys and Monographs}
volume={132}
publisher={American Mathematical Society}
}

\bib{Meyer}{article}{
    AUTHOR = {Meyer, Jean-Pierre},
     TITLE = {Bar and cobar constructions. {II}},
   JOURNAL = {J. Pure Appl. Algebra},
  FJOURNAL = {Journal of Pure and Applied Algebra},
    VOLUME = {43},
      YEAR = {1986},
    NUMBER = {2},
     PAGES = {179--210},
      ISSN = {0022-4049},
   MRCLASS = {18G55 (57T30)},
  MRNUMBER = {866618},
MRREVIEWER = {R. M. Vogt},
}

\bib{PS12}{article}{
AUTHOR = {K. Ponto} 
author={M. Shulman},
     TITLE = {Duality and traces for indexed monoidal categories},
   JOURNAL = {Theory Appl. Categ.},
  FJOURNAL = {Theory and Applications of Categories},
    VOLUME = {26},
      YEAR = {2012},
     PAGES = {582--659},
      ISSN = {1201-561X},
   MRCLASS = {18D10 (18D30)},
  MRNUMBER = {3065938},
MRREVIEWER = {Adriana Balan},
}

\bib{PS14}{article}{
AUTHOR = {K. Ponto}
AUTHOR = {M. Shulman},
     TITLE = {The multiplicativity of fixed point invariants},
   JOURNAL = {Algebr. Geom. Topol.},
  FJOURNAL = {Algebraic \& Geometric Topology},
    VOLUME = {14},
      YEAR = {2014},
    NUMBER = {3},
     PAGES = {1275--1306},
      ISSN = {1472-2747}
}

\bib{Quillen}{article}{
    AUTHOR = {Quillen, Daniel},
     TITLE = {Higher algebraic {$K$}-theory. {I}},
 BOOKTITLE = {Algebraic {$K$}-theory, {I}: {H}igher {$K$}-theories ({P}roc.
              {C}onf., {B}attelle {M}emorial {I}nst., {S}eattle, {W}ash.,
              1972)},
     PAGES = {85--147. Lecture Notes in Math., Vol. 341},
 PUBLISHER = {Springer, Berlin},
      YEAR = {1973},
   MRCLASS = {18F25},
  MRNUMBER = {0338129},
MRREVIEWER = {Stephen M. Gersten},
}

\bib{SaSch}{article}{
title={Diagram spaces and symmetric spectra}
author={C. Schlichtkrull}
author={S. Sagave}
journal={Adv. Math.}
volume={231}
date={2012}
number={3--4}
pages={2116--2193}
}

\bib{SS}{article}{
AUTHOR = {Schwede, Stefan}
author={Shipley, Brooke},
     TITLE = {Stable model categories are categories of modules},
   JOURNAL = {Topology},
  FJOURNAL = {Topology. An International Journal of Mathematics},
    VOLUME = {42},
      YEAR = {2003},
    NUMBER = {1},
     PAGES = {103--153},
      ISSN = {0040-9383},
   MRCLASS = {55U35 (18G55 55P42 55P43)},
  MRNUMBER = {1928647},
MRREVIEWER = {Mark Hovey},
       }

\bib{Sh08}{article}{
    AUTHOR = {M. Shulman},
     TITLE = {Parametrized spaces model locally constant homotopy sheaves},
   JOURNAL = {Topology Appl.},
  FJOURNAL = {Topology and its Applications},
    VOLUME = {155},
      YEAR = {2008},
    NUMBER = {5},
     PAGES = {412--432},
      ISSN = {0166-8641},
     CODEN = {TIAPD9},
   MRCLASS = {18G55 (55P99)},
  MRNUMBER = {2380927},
  }

\end{biblist}
\end{bibdiv}

\end{document}